\documentclass[11pt,psamsfonts,reqno]{amsart}
\usepackage{amsmath}
\usepackage{amsthm}
\usepackage{amssymb}
\usepackage{amscd}
\usepackage{amsfonts}
\usepackage{amsbsy}
\usepackage{epsfig,afterpage}
\usepackage[hidelinks]{hyperref}
\usepackage[dvips]{psfrag}
\usepackage{color}     
\usepackage{overpic}
\usepackage{xcolor}
\usepackage{faktor}
\usepackage{caption}
\usepackage{bm}
\usepackage[pagewise]{lineno}


\usepackage{float}
\usepackage{tikz, wrapfig}
\usetikzlibrary{matrix,arrows,arrows.meta,bending,decorations.pathreplacing,decorations.pathmorphing,decorations.markings,fit,patterns,shapes,intersections,calc}
\tikzset{node distance=3cm, auto}
\allowdisplaybreaks

\newcommand{\R}{\ensuremath{\mathbb{R}}}

\newcommand{\N}{\ensuremath{\mathbb{N}}}

\newcommand{\Z}{\ensuremath{\mathbb{Z}}}

\newtheorem {mtheorem} {Theorem}

\newtheorem {proposition}  {Proposition}

\newtheorem {lemma}  {Lemma}
\newtheorem {definition}  {Definition}
\newtheorem {remark} {Remark}
\newtheorem {example} {Example}

\newtheorem {corollary}  {Corollary}

\begin{document}


\title{Symbolic dynamics of planar piecewise smooth vector fields}

\author[A. A. Antunes,  T. Carvalho] {Andr\'{e} do Amaral Antunes$^1$, Tiago Carvalho$^2$ }

\address{$^1$ IBILCE/UNESP, Zip Code 15054-000, S\~ao Jose do Rio Preto, S\~ao Paulo, Brazil}
\address{$^2$ FFCLRP-USP, Zip Code 14040-901, Ribeir\~ao Preto, S\~ao Paulo, Brazil}
\email{andre.antunes@unesp.br}
\email{tiagocarvalho@usp.br}

\subjclass[2010]{Primary 37C10; Secondary 34A36, 37B10, 37C05}

\keywords{Piecewise smooth vector field, minimal set, chaos, symbolic dynamics}


\begin{abstract}
	Recently, the theory concerning  piecewise smooth vector fields (PSVFs for short) have been undergoing important improvements. In fact, many results obtained do not have an analogous for smooth vector fields. For example, the chaoticity of planar PSVFs, which is impossible for the smooth ones. These differences are generated by the non-uniqueness of trajectory passing through a point. Inspired by the classical fact that one-dimensional discrete dynamic systems can produce chaotic behavior, we construct a conjugation between the shift map and  PSVFs. By means of the results obtained and the techniques employed, a new perspective  on the study of PSVFs is brought to light and, through already established results for discrete dynamic systems, we will be able to obtain results regarding PSVFs.
\end{abstract}

\maketitle

\section{Introduction}\label{intro}

In this paper we investigate some correlations between dynamical systems continuous  and discrete in time. In fact, several times, when working with continuous flows, one creates a single transformation of the space (or a subset of it) on  itself creating a discrete dynamical system. For example, when studying a  physical phenomenon and the measures of such can only be made in a discrete set of time; or when studying a Poincaré Map (first return map) that gives valuable information of a continuous  flow concerning stability and cyclicity. The inverse procedure  also is useful. For example, a biological phenomenon, where the number of individuals (cells, preys, etc) are natural numbers, can be modeled using an ordinary differential equation continuous in time (instead of a discrete function defined in the set $\N$ of natural number). In other cases, it is possible  to create a transformation of the flow domain (or a subset of it) on a set of discrete objects (for example, numbers either in $\Z$ or $\N$) and using the so called \textit{symbolic dynamics} to infer some important properties of that flow.

In parallel, there are a lot of applied problems that can be modeled using flows continuous in time but in such a way that the vector field involved is not smooth. This happens when the system is suddenly submitted to a \textit{on-off} change. For example, when we are studying the evolution of cancer cells (resp., HIV infected cells) in a patient submitted to an intermittent protocol of treatment, where chemotherapy (resp., antiretroviral) is administered in periodic periods (see  \cite{RMCG2019,Cancer-AMC-2019,Tang:2012}).  We will use the term \textit{Piecewise Smooth Vector Fields} (PSVFs, for short) to refer to these non-smooth vector fields.

One of the  most relevant characteristic about PSVFs is the non uniqueness of trajectory passing through a point, which means that if one takes a point, it is possible to  ask where its trajectory is going  after an amount of time $ t $, and we may have several different answers for that. But, if one takes a whole trajectory instead of a point, there is no ambiguity in that question, which indicates that we may have some more properties of the PSVFs when studying the space of orbits of that system.

The literature about PSVFs predicts the existence of chaotic behavior even in the planar (two-dimensional) case (see \cite{Ball-LuizTiago,BCEchaotic}). The proofs establish in the previous papers take into in count the classical definition of chaos (sensibility to the initial conditions $+$ topological transitivity). However, at the present paper, we introduce a fully new way of addressing this issue. In fact,  we induce a discrete dynamics in the space of orbits of the PSVFs and use symbolic dynamics to describe this system. Moreover, the results obtained can be useful, in the future, to answer another open questions, like:

\begin{itemize}
	\item What is the topological entropy of PSVFs?
	\item Is it possible the obtainment of Horse Shoes for (planar) PSVFs?
	\item Is it possible to determine a fractal (box) dimension for PSVFs?
\end{itemize} 
In fact, the deepness of the results and techniques presented in this work is expected to be revealed since a wide range of problems concerning PSVFs can be addressed by following classical scripts in symbolic dynamics, and vice versa.

\section{Main results}

Given a flow $\varphi(t,x)$ of a vector field $W$ defined in an open set $ \mathcal{U} $, the time-one map is the function $ T_{1}:\mathcal{U}\to\mathcal{U} $, $ T_{1}(x)=\varphi(1,x) $. The next result relates time-one maps of PSVFs (see Definition \ref{defi-time-one-map}) and (sub)shifts.

\begin{mtheorem}\label{teo 1} 
	\begin{itemize}
		
		\item[(i)] There exists a conjugacy between the time-one map of the PSVF 
		\begin{equation}\label{eq Z 2 simbolos}
				Z_{2}(x,y)=\left\{
				\begin{array}{l} 
					X_{2}(x,y)= \left(1,\frac{x}{2}-4x^3\right) ,\quad $for$ \quad y \geq 0 \\ 
					Y_{2}(x,y)= \left(-1,\frac{x}{2}-4x^3\right),\quad $for$ \quad y \leq 0
				\end{array}
				\right. ,
				\end{equation} 
		restricted to an invariant compact set $ \Lambda_2 \subset \R^{2}$ and the full shift of two symbols $ \sigma: \{0,1\}^{\Z}\to\{0,1\}^{\Z}  $
		
%
		
		\item[(ii)] For each $ k\geqslant 3 $, there exists a planar PSVF
	\begin{equation}\label{eq Z k simbolos}
	Z_{k}(x,y)=\left\{
	\begin{array}{l} 
	X_{k}(x,y)= \left(1, P_{k}'(x) \right) ,\quad $for$ \quad y \geq 0 \\ 
	Y_{k}(x,y)= \left(-1,P_{k}'(x)\right),\quad $for$ \quad y \leq 0
	\end{array}
	\right. ,
	\end{equation}where 
	\begin{equation}\label{polinomio Pk}
	P_{k}(x)=- \left(x+\frac{k-1}{2}\right) \left(x-\frac{k-1}{2}\right) \prod\limits_{i=1}^{k-1} \left(x-\left(i-\frac{k}{2}\right)\right)^2
	\end{equation}such that, restricted to an  invariant compact set $ \Lambda_{k} $, the time-one map is conjugated to a subshift of $ 2(k-1)$ symbols. 
		
%
%

		\item[(iii)] There exists a conjugacy between the time-one map of the PSVF 
		\begin{equation}\label{eq Z infinito}
		Z_{\infty}(x,y)=\left\{
		\begin{array}{l} 
		X_{\infty}(x,y)= (1,2\sin(2\pi x)) ,\quad $for$ \quad y \geq 0 \\ 
		Y_{\infty}(x,y)= (-1,2\sin(2\pi x)),\quad $for$ \quad y \leq 0
		\end{array}
		\right. ,
		\end{equation} 
		restricted to an invariant set $ \Lambda_{\infty} \subset \R^{2}$ and a subshift over an infinite alphabet $ \sigma: \Theta_{\infty}\to\Theta_{\infty} $. 

		
		\item[(iv)]  The return map of the PSVF
			\begin{equation}\label{Eq Campo feijao}
		Z(x,y)=\left\{
		\begin{array}{l} 
		X(x,y)= (1, -2x) ,\quad $for$ \quad y \geq 0 \\ 
		Y(x,y)= (-2,-4x^3+2x),\quad $for$ \quad y \leq 0
		\end{array}
		\right. ,
		\end{equation}restricted to an invariant compact set $ \Lambda$,  is conjugated to $\sigma:(0,1]^{\Z}\to(0,1]^{\Z} $.

		\end{itemize}

	\end{mtheorem}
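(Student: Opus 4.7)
The four systems share a common skeleton. In parts (i)--(iii), $Z_k$ has the form $(\pm 1, P_k'(x))$, so the quantities $H^{\pm}(x,y)=y\mp P_k(x)$ are conserved along the flow in the upper and lower half-planes respectively. Orbits follow level curves of $P_k$ translated vertically, and every trajectory either crosses the switching line $\Sigma=\{y=0\}$ transversally or is tangent to it precisely at a critical point of $P_k$. The non-uniqueness of trajectories is concentrated at these tangency points, where an orbit reaching a critical value of $P_k$ may be continued past the corresponding hump or deflected so as to exit via a neighboring one; these branching choices are exactly what the symbolic coding will record.

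First, I would analyze the Poincar\'e return map on $\Sigma$. Since the horizontal speed is $\pm 1$, the transit time from $(x_0,0)$ to the next hit $(x_1,0)$ equals $|x_1-x_0|$, with $x_1$ the next real root of $P_k(\cdot)=P_k(x_0)$ in the appropriate direction. The specific polynomials in the statement are tailored to create exactly the required number of tangent humps: in (i), $P_2(x)=x^2/4-x^4$ has two symmetric maxima, matching the binary alphabet; in (ii), the double factors $(x-(i-k/2))^2$ in $P_k$ force $k-1$ humps tangent to the $x$-axis, giving $2(k-1)$ branching possibilities at each return; and in (iii), $\sin(2\pi x)$ produces a $\Z$-indexed array of identical humps, yielding the infinite alphabet.

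Next, I would define the coding $h\colon \Lambda_k\to\Theta_k$ by recording, for each orbit, the bi-infinite sequence of humps it traverses. The invariant set $\Lambda_k$ is the set of points whose forward and backward orbits (taking all legal continuations through tangencies into account) remain bounded; it is compact in cases (i) and (ii), while in (iii) it lives inside the non-compact $\R^{2}$ and needs additional analysis. The admissible space $\Theta_k$ is the set of bi-infinite words compatible with the geometric adjacency of the humps on $\R$ and with the sign structure of the flow. To prove $h$ is a topological conjugacy I would verify: (a) continuity, from smooth dependence of the transit-time map on initial conditions on $\Sigma$; (b) injectivity, from the conservation of $H^{\pm}$; (c) surjectivity, via a shadowing argument that, given any admissible sequence of hump choices, constructs an initial condition on $\Sigma$ whose orbit realizes that sequence; and (d) the intertwining $h\circ T_1=\sigma\circ h$, which follows because the widths of the humps and the $\pm 1$ horizontal speed are arranged so that one time unit corresponds to exactly one shift of the symbolic word.

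For part (iv), the asymmetric horizontal speeds ($+1$ above, $-2$ below) combined with the cubic lower component yield a return map to $\Sigma$ whose image is a continuous interval rather than a discrete set. Solving the upper-half equation $y+x^{2}=\mathrm{const}$ and the lower-half equation $y=x^{4}/2-x^{2}/2+\mathrm{const}$, I would show that the first return to $\Sigma$ sends a point of an appropriate subinterval of $\Sigma$ to a uniquely determined value in $(0,1]$, producing an interval-valued coding; iterating gives the conjugacy with $\sigma\colon(0,1]^{\Z}\to(0,1]^{\Z}$, with $\Lambda$ taken to be the closure of orbits that remain in the bounded region. The main obstacle throughout is surjectivity of $h$: the shadowing step, in which each admissible symbol sequence must be realized by a bona fide orbit, requires delicate control of how tangency branchings propagate under the inverse of the return map. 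Handling the non-compact alphabet in (iii), and verifying that every value in $(0,1]$ is attainable in (iv), will require further, system-specific care.
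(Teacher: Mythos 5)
There is a genuine gap, and it sits at the exact point the paper's proof is designed to handle. For a PSVF the trajectory through a point is not unique, so neither the time-one map $x\mapsto\varphi(1,x)$ on $\Lambda_k$ nor your coding $h\colon\Lambda_k\to\Theta_k$ is well defined as a map on points: a two-fold $(p_j,0)$ lies on uncountably many global trajectories with different itineraries (so $h$ is multivalued), and conversely all points of a fixed compartment whose orbits make the same branch choices share one itinerary (so any selection of $h$ fails injectivity, and conservation of $H^{\pm}$ cannot rescue it). Your checklist (a)--(d) is the right one, but it is applied to the wrong space. The paper circumvents this by replacing $\Lambda_k$ with the space $\Omega_k$ of global trajectories, redefining the time-one map as $T_1(\gamma)(\cdot)=\gamma(\cdot+1)$ there, quotienting by ``same itinerary,'' metrizing the quotient by a weighted sum of Hausdorff distances between the arcs $\gamma([i,i+1])$, and only then proving that the itinerary map is a homeomorphism intertwining $\overline{T_1}$ with $\sigma$. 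Two further points: $\Lambda_k$ is the explicit union of the graphs of $\pm P_k$ over $[r_0,r_1]$ (not the set of bounded orbits), and surjectivity needs no shadowing --- since branching at the folds is completely free subject to adjacency, one simply concatenates the arcs $I_{s_j}$ prescribed by an admissible word to produce a legal global trajectory.

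In part (iv) your plan contradicts the conclusion you are trying to reach: if the first return to $\Sigma$ sent each point to a \emph{uniquely determined} value in $(0,1]$, the return dynamics could not be conjugate to the full shift on $(0,1]^{\Z}$. The whole point is that every orbit in the region $\Lambda$ reaches the invisible-visible two-fold $p=(0,0)$ and from there can be continued so as to hit the section $K=\{0\}\times(0,1]$ at \emph{any} height, which is what makes every bi-infinite sequence in $(0,1]^{\Z}$ realizable. The paper again works on the space of global trajectories, records the sequence of $y$-coordinates of the successive hits on $K$, and, because the return times are no longer constant, defines the return map on orbits by $\mathcal{T}(\gamma)(\cdot)=\gamma(\cdot+\eta_{\gamma})$ with $\eta_{\gamma}$ the first positive hitting time of $K$. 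Without reformulating your construction on the orbit space (or an equivalent device), steps (b), (c) and (d) of your plan cannot be carried out as stated.
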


In Section \ref{secao exemplos} is exhibited an example of PSVF whose time one map is conjugated to sub-shift of $4$-symbols.


%
%
%

In the next result we state that the canonical forms presented in items (i) to (iv) of Theorem \ref{teo 1} represent a bigger class of PSVFs conjugated to the (sub)shifts mentioned. Before announcing it, we indicate Definitions \ref{def folds} and \ref{def loops} where the concept of \textit{fold points} and \textit{homoclinic loops} are defined.

\begin{mtheorem}\label{teo 2}
	\begin{itemize}
		\item[(i)] The PSVF $Z_{2}$ of Theorem A, restrict to $\Lambda_2$, is $\Sigma$-equivalent to any PSVF presenting a $1$-homoclinic loop. 
		
		\item[(ii)] 	The PSVF $Z_k$ of Theorem A, restrict to $\Lambda_k$, is  $\Sigma$-equivalent to any PSVF presenting a $(k-1)$-homoclinic loop.
		
		\item[(iii)] The PSVF $Z_\infty$ of Theorem A, restrict to $\Lambda_\infty$, is  $\Sigma$-equivalent to any PSVF presenting a $\infty$-homoclinic loop.
		
		\item[(iv)]  The PSVF $Z$ of Theorem A, restrict to $\Lambda$, is $\Sigma$-equivalent to any PSVF $\widetilde{Z}$ presenting a compact region $\widetilde{\Lambda}$ bounded by a trajectory of $\widetilde{Z}$ passing through a invisible-visible  two-fold $\widetilde{p}$. Moreover, except for $\widetilde{p}$, the PSVF $\widetilde{Z}$ has just more two invisible tangential singularities.
	\end{itemize}
\end{mtheorem}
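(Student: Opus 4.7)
The approach for all four items is essentially the same: reduce the statement to Theorem A by exhibiting, for any candidate PSVF $\widetilde{Z}$, a conjugacy between its time-one map on an appropriate invariant compact set $\widetilde{\Lambda}$ and the relevant (sub)shift; composition with the conjugacy provided by Theorem A then yields a homeomorphism $h:\Lambda_n\to\widetilde{\Lambda}$ intertwining the two time-one maps, which in a second step must be upgraded to a $\Sigma$-equivalence on a neighbourhood of the corresponding loop.

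To carry out the first step I would construct a purely topological model of an $n$-homoclinic loop that depends only on how many fold points it visits and on their visibility type. At each visible fold the non-uniqueness of trajectories offers two local branches, producing $2n$ ``letters''; admissibility of bi-infinite words is then dictated only by the incidence pattern of the loop. Since $Z_n$ and $\widetilde{Z}$ realise the same combinatorial model, their itinerary maps have image in the same (sub)shift and both conjugacies follow from the same argument already used in Theorem A, applied to the general loop instead of the explicit polynomial example. For part (iv) the coding is continuous: the alphabet $(0,1]$ parametrizes a transverse section hit by the orbit issuing from the invisible--visible two-fold $\widetilde{p}$, and the hypothesis that $\widetilde{Z}$ has exactly two further invisible tangential singularities is what guarantees that the return map is well defined and bijective onto $(0,1]$, so that Theorem A(iv) applies verbatim to $\widetilde{Z}$ as well.

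The main obstacle, in my view, is the second step: the time-one map records the itinerary but forgets transit times, so a priori $h$ only conjugates discrete systems, whereas a $\Sigma$-equivalence asks for a homeomorphism mapping orbits of $Z_n$ to orbits of $\widetilde{Z}$ while preserving the switching manifold. My plan is to extend $h$ by working in flow-box coordinates on each side of $\Sigma$, transporting points along the orbits of $Z_n$ after a continuous time reparametrization chosen so that successive visits to $\Sigma$ match those of $\widetilde{Z}$, and then gluing the two extensions across $\Sigma$; compatibility on $\Sigma$ is forced by sending fold points to fold points of matching visibility, information already encoded by the symbolic conjugacy. The subtlest aspect is the neighbourhood of the fold and two-fold tangencies themselves, where the flow bifurcates and uniqueness fails: here the gluing must be performed on the space of orbits rather than pointwise, in line with the viewpoint emphasized in the introduction, and it is precisely the matching of visibility types across $h$ that makes this well defined.
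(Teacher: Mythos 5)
Your route is genuinely different from the paper's, and as written it has a real gap. The paper does not use Theorem A or any symbolic dynamics to prove Theorem B: it constructs the homeomorphism $h$ directly and geometrically. One sends the distinguished points to each other ($h(p_j)=\widetilde{p}_j$, crossing points to crossing points, and in item (iv) the two-fold and the tangential singularities to their counterparts), then maps each arc of trajectory joining two consecutive distinguished points to the corresponding arc of $\widetilde{Z}$ by the arc length parameterization, and finally checks that these piecewise definitions glue to an orbit-preserving homeomorphism of $\Lambda_n$ onto $\widetilde{\Lambda}$ carrying $\Sigma$ to $\widetilde{\Sigma}$ (and $\Sigma^{e}$, $\Sigma^{s}$, $\Sigma^{c\pm}$ to their analogues in item (iv)). Since both loops have the same combinatorial incidence pattern of folds by hypothesis, the arcs correspond bijectively and this is the whole proof.

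The gap in your plan is concentrated exactly where you locate "the main obstacle,'' and it is not closed. First, re-running Theorem A for a general $\widetilde{Z}$ with a $(k-1)$-homoclinic loop is not verbatim: the time-one coding in the paper relies on Proposition~\ref{flight-time}, i.e.\ on the transit time between consecutive folds being exactly $1$, which is a special feature of the explicit fields $Z_k$ and fails for a general loop; you would have to replace $T_1$ by a first-return map to the fold set, which is a different statement from Theorem A. Second, and more seriously, the conjugacies of Theorem A live on the quotient spaces $\overline{\Omega}$ of equivalence classes of orbits, where all trajectories with the same itinerary are identified. Composing them yields at best a homeomorphism between these quotients, not a homeomorphism $\Lambda_n\to\widetilde{\Lambda}$ of planar sets; recovering the latter (which is what Definition~\ref{definicao sigma-equivalencia} demands) from the symbolic data is precisely the content of the theorem, and your proposal to "perform the gluing on the space of orbits rather than pointwise'' does not produce the required pointwise homeomorphism. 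The direct arc-length construction avoids both issues, since a $\Sigma$-equivalence need not preserve time and can be built arc by arc without ever passing through the shift.
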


We stress that the restriction to the pictured sets of both PSVFs in Figure \ref{fig-equivalencias}, according to Theorem B item (i), are $\Sigma$-equivalents  to $Z_2$ restricted to $\Lambda_2$. That means that our result is not restrict to general PSVFs with invariant sets with the shape of a ``figure eight lying''.  The same holds for $Z_k$, with $k \in \{ 3,4,\hdots \} \cup \{ \infty\}$.
	\begin{figure}[h]
	\includegraphics[width=0.85\linewidth]{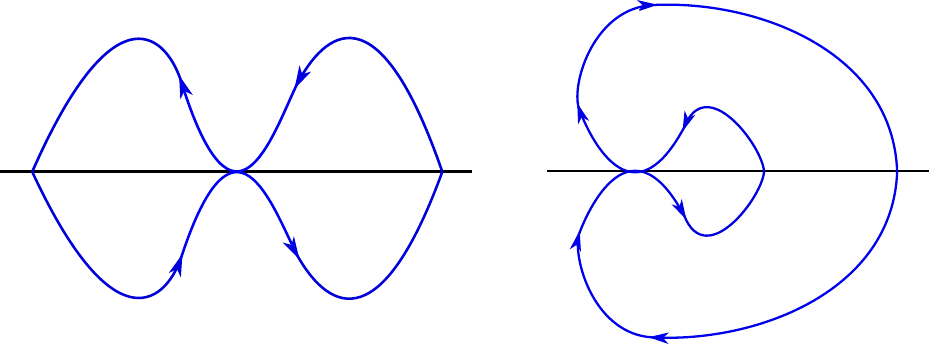}
	\caption{Disctint kinds of PSVFs whose restriction to an invariant set is $\Sigma$-equivalent to a restriction of $Z_2$ to $\Lambda_2$. Also, it is important to highlight that these PSVFs could have non-polynomial expressions. An analogous remark could be done for the other cases in Theorem A.}
	\label{fig-equivalencias}
\end{figure}

As an immediate consequence of the previous results we have that:

\begin{corollary}
	The time-one maps of PSVFs presented in Theorems \ref{teo 1} and \ref{teo 2} have periodic points of any period and are topologically mixing (see Definition \ref{def-mixing}).
\end{corollary}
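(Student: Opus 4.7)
The plan is to deduce the corollary entirely from the conjugacies established in Theorem A (extended via Theorem B), together with the fact that both having periodic points of every period and being topologically mixing are invariants under topological conjugacy. Once this is observed, the task reduces to checking these two properties for each of the four (sub)shifts appearing on the symbolic side of Theorem A.

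For item (i), the full shift $\sigma:\{0,1\}^{\Z}\to\{0,1\}^{\Z}$ is the model example: periodic points of period $n$ correspond to the $2^n$ periodic sequences of length $n$, and topological mixing is classical, since given two cylinders $[a_{-k}\cdots a_k]$ and $[b_{-k}\cdots b_k]$, concatenation produces, for every sufficiently large $n$, sequences lying in the first cylinder whose shift by $n$ lies in the second. For item (iii), the same kind of argument works in the infinite-alphabet setting $\sigma:\Theta_{\infty}\to\Theta_{\infty}$, provided one verifies that the admissibility rules defining $\Theta_{\infty}$ (which come from the construction of $\Lambda_{\infty}$ and are used in the proof of Theorem A) permit concatenation of any two admissible words via a suitable connecting word, which is a Markov-type condition that should be read off the geometry of $Z_{\infty}$.

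For item (ii), the subshift on $2(k-1)$ symbols is a subshift of finite type whose transition graph is produced from the homoclinic loop structure of $Z_k$; I would write down its transition matrix $A$ and check that $A$ is primitive (some power has all entries positive). Primitivity immediately yields both topological mixing and the existence of periodic orbits of every sufficiently large period; the existence of short-period orbits is then covered by inspecting cycles in the graph directly. For item (iv), the shift $\sigma:(0,1]^{\Z}\to(0,1]^{\Z}$ on an uncountable alphabet is in fact even easier: given any finite central blocks $(u_{-k},\dots,u_k)$ and $(v_{-k},\dots,v_k)$ with entries in $(0,1]$, the sequence $(\dots,u_{-k},\dots,u_k,v_{-k},\dots,v_k,\dots)$ lies in the first cylinder and shifts into the second, proving mixing; periodic points of any period are obtained by periodic concatenation of any finite block.

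The only genuine obstacle is item (ii), because the symbolic space is not a full shift and the mixing conclusion depends on the combinatorial structure of the $(k-1)$-homoclinic loop coded in the proof of Theorem A. I would therefore devote the bulk of the argument to explicitly exhibiting the transition matrix, checking its primitivity (equivalently, strong connectedness of its graph plus aperiodicity of cycle lengths, which is visible from the presence of both length-$1$ and length-$2$ loops in the coding of the tangency passages), and then invoking the standard fact that primitive subshifts of finite type are topologically mixing and have periodic points of every period. The remaining items then follow with minimal extra work, and Theorem B transfers all of these conclusions to the full class of $\Sigma$-equivalent PSVFs.
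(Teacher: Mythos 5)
Your overall strategy --- transport both properties through the conjugacies of Theorem \ref{teo 1} and then verify them on the symbolic side --- is exactly the route the paper takes implicitly (the corollary is stated there as an immediate consequence, with the mixing of the full shift recorded as an unproved proposition), and for items (i), (ii) and (iv) your treatment is more careful than the paper's. The primitivity check you propose for item (ii) is indeed the needed ingredient, and it succeeds: the transition graph on $I_0,\dots,I_{2k-3}$ is strongly connected and carries the time-one self-loops $I_0\to I_0$ and $I_{2k-3}\to I_{2k-3}$ created by the arcs through the crossing points $r_0$ and $r_1$, so the matrix is primitive (for $k=3$ the matrix $M$ of Example \ref{Ex-3symbols} has $M^2>0$). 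Two minor cautions: in $(0,1]^{\Z}$ with the metric $\sum_j|x_j-y_j|/2^{|j|}$ cylinder sets are \emph{not} open (the alphabet is not discrete), so the mixing argument in item (iv) must be run with metric balls rather than cylinders --- routine to fix; and $\Sigma$-equivalence matches orbits but not the time parametrization, so transferring statements about the \emph{time-one map} through Theorem \ref{teo 2} needs an extra reparametrization remark rather than ``minimal extra work.''

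The genuine gap is item (iii), and it cannot be filled: the connecting-word property you defer to ``the geometry of $Z_\infty$'' fails because of a parity obstruction. For $Z_\infty$ each arc $I_{2j}$ is traversed left-to-right from the fold $p_j$ to $p_{j+1}$ in time exactly $1$, and each $I_{2j+1}$ right-to-left from $p_{j+1}$ to $p_j$ in time $1$; hence for the normalized representative the fold index occupied at integer time $n$ performs a walk on $\Z$ with steps $\pm1$, and a trajectory lying in the same arc $I_m$ on both $(0,1)$ and $(n,n+1)$ forces that walk to be closed of length $n$, so $n$ must be even. Consequently, for the open cylinder $U=\{a_0=m\}$ in $\overline{s}(\overline{\Omega}_\infty)$ one has $\sigma^{n}(U)\cap U=\emptyset$ for every odd $n$, and every periodic itinerary has even period: the subshift of item (iii) is topologically transitive but \emph{not} topologically mixing, and it has no points of odd period (in particular no fixed points). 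The finite-$k$ cases escape this precisely because $I_0$ and $I_{2k-3}$ are time-one self-loops at $p_1$ and $p_{k-1}$, destroying the bipartite structure; for $k=\infty$ there are no such loops. Any correct proof of the corollary must therefore exclude item (iii) or weaken the conclusion there.
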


\begin{corollary}
	All PSVFs presented in items (i) to (iv) of Theorems \ref{teo 1} and \ref{teo 2} are chaotic.
\end{corollary}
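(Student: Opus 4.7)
The approach is to import the (well-known) chaoticity of the target symbolic systems via the conjugacies of Theorem~\ref{teo 1}, and then propagate the conclusion through the $\Sigma$-equivalences of Theorem~\ref{teo 2}. First, for each shift $\sigma$ appearing on the right-hand side of items (i)--(iv) of Theorem~\ref{teo 1}, I would verify Devaney chaoticity directly: the full $2$-shift is classically chaotic; for the subshifts on $2(k-1)$ symbols and on the infinite alphabet $\Theta_{\infty}$, one combines topological mixing (granted by the previous corollary) with density of periodic orbits (which follows from mixing by a standard cylinder-set argument in symbolic dynamics) and invokes the Banks--Brooks--Cairns--Davis--Stacey theorem to deduce sensitive dependence on initial conditions. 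The shift on $(0,1]^{\Z}$ of item (iv) is handled in the same spirit on the associated product space.

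Second, since topological conjugacy preserves each of transitivity, density of periodic points, and sensitive dependence, the time-one maps of $Z_{2}$, $Z_{k}$, $Z_{\infty}$ and $Z$, restricted to their respective invariant sets, inherit chaoticity. The passage from the time-one map to the full PSVF is routine: a dense $T_{1}$-orbit sits inside its continuous-time orbit, so transitivity transfers; a periodic point of $T_{1}$ yields a periodic trajectory of the PSVF; and sensitivity for the flow follows from sensitivity of $T_{1}$ by joint continuity of the flow in $t$. The only delicate point is the multivaluedness of trajectories characteristic of PSVFs, but this has already been absorbed into the definition of time-one map (Definition~\ref{defi-time-one-map}) used to produce the conjugacies in Theorem~\ref{teo 1}, so it introduces no additional complication at this stage.

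Finally, Theorem~\ref{teo 2} provides a $\Sigma$-equivalence between each canonical model $Z_{\ast}$ restricted to $\Lambda_{\ast}$ and any representative of the corresponding class of PSVFs (those carrying a $1$-, $(k-1)$-, or $\infty$-homoclinic loop, or the two-fold configuration of item (iv)). Because a $\Sigma$-equivalence is a homeomorphism of the invariant sets that sends orbits to orbits, it preserves transitivity, density of periodic orbits and sensitivity; the chaoticity established in the previous paragraph is therefore transported to the entire class. The main obstacle I foresee lies in item (iv): the alphabet $(0,1]$ is uncountable, and before quoting the Banks et al.\ theorem one must confirm that the topology in which $\Lambda$ is compact really makes the conjugate shift satisfy the hypotheses of that theorem; should it not, an \emph{ad hoc} sensitivity estimate, obtained by exploiting the expanding behaviour of $\sigma$ on cylinders of $(0,1]^{\Z}$, can be substituted without disrupting the overall scheme.
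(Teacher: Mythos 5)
The paper offers no written proof of this corollary: it is stated as an ``immediate consequence'' of Theorems \ref{teo 1} and \ref{teo 2}, the intended argument being exactly the one you outline (chaoticity of the symbolic models is transported through the conjugacies of Theorem \ref{teo 1} and then through the $\Sigma$-equivalences of Theorem \ref{teo 2}). So your overall route coincides with the paper's. Two points in your write-up deserve correction or caution, though. First, the implication ``topological mixing $\Rightarrow$ density of periodic orbits'' is false for general dynamical systems and even for general subshifts (there exist infinite minimal, hence periodic-point-free, topologically mixing subshifts), so it cannot be used as a black box. It is rescued here only because the specific subshifts involved are a full shift, irreducible subshifts of finite type given by explicit transition matrices (cf.\ Example \ref{Ex-3symbols}), the explicitly described subshift of $\Theta_{\infty}$, and the full shift on $(0,1]^{\Z}$, in all of which periodic sequences are visibly dense; note also that the paper's preceding corollary records only the existence of periodic points of every period, not their density, so density must be checked by hand in each case.

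Second, a subtlety that both you and the paper leave implicit: the conjugacies of Theorem \ref{teo 1} are between the shift and the map $\overline{T_{1}}$ (or $\overline{\mathcal{T}}$) acting on the quotient $\overline{\Omega}$ of the space of global trajectories, equipped with the metric $\rho$ --- not on the planar set $\Lambda$ itself. Hence transitivity, dense periodic orbits and sensitivity are obtained for the dynamics on orbit space, and asserting that the \emph{PSVF} on $\Lambda$ is chaotic (in the sense of the cited works, formulated for points of $\Lambda$ and the non-unique trajectories through them) requires an explicit translation from $(\overline{\Omega},\rho)$ back to $\Lambda$; your remark that the multivaluedness ``has already been absorbed into Definition \ref{defi-time-one-map}'' names the issue but does not discharge it. Likewise, sensitivity is a metric notion, and its transport through the $\Sigma$-equivalences of Theorem \ref{teo 2} (which are mere orbit-preserving homeomorphisms, not time-preserving conjugacies, and act on the non-compact $\Lambda_{\infty}$ in case (iii)) deserves at least a sentence. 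None of this invalidates your scheme --- it is the paper's own scheme --- but these are the places where the ``immediate'' consequence actually requires work.
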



\section{General theory concerning PSVFs, Shifts and diffeomorphisms}


%
\subsection{Piecewise Smooth Vector Fields}

\

Consider a codimension one manifold $\Sigma$ of $\R^n$ given by
$\Sigma =f^{-1}(0),$ where $f:\R^n \rightarrow \R$ is a smooth function
having $0\in \R$ as a regular value (i.e. $\nabla f(p)\neq 0$, for
any $p\in f^{-1}({0}))$. Assume that $\Sigma$ is an embedded submanifold of $\R^n$.  We call $\Sigma$ the \textit{switching
	manifold} that is the separating boundary of the regions
$\Sigma^+=\{q\in \R^n \, | \, f(q) \geq 0\}$ and $\Sigma^-=\{q \in \R^n\,
| \, f(q)\leq 0\}$.

Designate by $\mathfrak{X}^r$ the space of C$^r$-vector fields on
$V\subset\R^n$ endowed with the C$^r$-topology, with $r \geq 1$
large enough for our purposes. Call $\Omega^r$ the space of piecewise smooth vector fields (PSVFs for short)
$Z: \R^n \rightarrow \R ^{n}$ such that
\begin{equation}\label{eq Z}
Z(q)=\left\{\begin{array}{l} X(q),\quad $for$ \quad q \in
\Sigma^+,\\ Y(q),\quad $for$ \quad q \in \Sigma^-,
\end{array}\right.
\end{equation}
where $X=(X_1,\ldots,X_n) , Y = (Y_1,\ldots,Y_n) \in \mathfrak{X}^r$. We endow
$\Omega^r$ with the product topology. 
%

For practical purposes, the contact between the smooth vector field
$X$ and the switching manifold $\Sigma = f^{-1}(0)$ is characterized
by the expression $Xf(p)=\left\langle \nabla f(p),
X(p)\right\rangle$  where $\langle . , . \rangle$ is the usual inner product in $\R^n$.

The basic results of differential equations in this context were stated by Filippov (see \cite{Fi}). We can distinguish on $\Sigma$ the following regions:

$\bullet$ Crossing Region: $\Sigma^c=\{ p \in \Sigma \, | \, Xf(p)\cdot Yf(p)> 0 \}$.
Moreover, we denote $\Sigma^{c+}= \{ p \in \Sigma \, | \,
Xf(p)>0, Yf(p)>0 \}$ and $\Sigma^{c-} = \{ p \in \Sigma \, | \,
Xf(p)<0,Yf(p)<0 \}$.

$\bullet$ Sliding Region: $\Sigma^{s}= \{ p \in \Sigma \, | \, Xf(p)<0,
Yf(p)>0 \}$.

$\bullet$ Escaping Region: $\Sigma^{e}= \{ p \in \Sigma \, | \, Xf(p)>0
,Yf(p)<0\}$.

\begin{definition}\label{def folds}In the case $Xf(p)= 0$ we say that $p$ is a \textbf{tangential singularity} of $X$. A tangential singularity $p \in
\Sigma$ is a \textbf{fold point} of $X$ if $Xf(p)=0$ but $X^{2}f(p)\neq0$, where  $X^if(p)=\left\langle \nabla X^{i-1}f(p), X(p)\right\rangle$ for $i\geq 2$. Moreover, $p\in\Sigma$ is a \textbf{visible} (respectively {\bf invisible}) fold point of $X$ if $Xf(p)=0$ and $X^{2}f(p)> 0$ (respectively $X^{2}f(p)< 0$). A point $ p\in\Sigma $ is a two-fold, if it is a fold point for both $ X $ and $ Y $, and it is visible-visible if visible for both (respectively invisible-visible and invisible-invisible).\end{definition}

In addition, a tangential singularity $q$ is \textit{singular} if $q$ is a invisible tangency for both $X$ and $Y$.  On the other hand, a tangential singularity $q$ is \textit{regular} if it is not singular. 

\begin{definition}\label{definicao campo deslizante tangencial} Given a point $p \in \Sigma^s\cup\Sigma^e \subset \Sigma$, we define the \textbf{sliding vector field}  at $p$ as the vector field $Z^{T}(p)=m-p$ with
	$m$ being the point of the segment joining $p+X(p)$ and $p+Y(p)$
	such that $m-p$ is tangent to $\Sigma$. 
\end{definition}


In the plane, the sliding vector field is given by the expression
\begin{equation}\label{expfilipov}
Z^{T}(p)=\frac{Yf(p)X(p)-Xf(p)Y(p)}{Yf(p)-Xf(p)}.
\end{equation}
Moreover, the sliding vector field can be extend to $\overline{\Sigma^e}\cup\overline{\Sigma^s}$.

Now we establish the classical convention on the trajectories of orbit-solutions of a PSVF.
\begin{definition}\label{definicao trajetorias}
	The \textbf{local trajectory (orbit)} $\phi_{Z}(t,p)$ of a PSVF
	given by \eqref{eq Z} through $p\in V$ is defined as follows:
	\begin{itemize}
		\item[(i)] For $p \in \Sigma^+ \backslash \Sigma$ and $p \in \Sigma^{-} \backslash \Sigma$ the trajectory is
		given by $\phi_{Z}(t,p)=\phi_{X}(t,p)$ and
		$\phi_{Z}(t,p)=\phi_{Y}(t,p)$ respectively, where $t\in I$.
		
		\item[(ii)] For $p \in \Sigma^{c+}$ and  taking the
		origin of time at $p$, the trajectory is defined as
		$\phi_{Z}(t,p)=\phi_{Y}(t,p)$ for $t\in I\cap \{t\leq 0\}$ and
		$\phi_{Z}(t,p)=\phi_{X}(t,p)$ for $t\in I\cap \{t\geq 0\}$. For
		the case $p \in \Sigma^{c-}$  the definition is the same
		reversing time.

		\item[(iii)] For $p \in \Sigma^e$ and  taking the
		origin of time at $p$, the trajectory is defined as
		$\phi_{Z}(t,p)=\phi_{Z^{T}}(t,p)$ for $t\in I\cap \{t\leq 0\}$ and
		$\phi_{Z}(t,p)$ is either $\phi_{X}(t,p)$ or $\phi_{Y}(t,p)$ or $\phi_{Z^{T}}(t,p)$ for $t\in I\cap \{t\geq 0\}$. For
		$p \in \Sigma^s$ the definition is the same
		reversing time.

		\item[(iv)] For $p$ a regular tangency point and  taking the
		origin of time at $p$, the trajectory is defined as
		$\phi_{Z}(t,p)=\phi_{1}(t,p)$ for $t\in I\cap \{t\leq 0\}$ and
		$\phi_{Z}(t,p)=\phi_{2}(t,p)$ for $t\in I\cap \{t\geq 0\}$, where each $\phi_{1},\phi_{2}$ is either $\phi_{X}$ or $\phi_{Y}$ or $\phi_{Z^{T}}$.
		
		\item[(v)]For $p$ a singular tangency point, $\phi_{Z}(t,p)=p$ for all $t \in \R$.
	\end{itemize}
\end{definition}

\begin{definition}\label{definicao trajetoria global}
	The \textbf{ global trajectory} (\textbf{orbit}) $\Gamma_Z (t,p_0)$ of $Z$ passing through $p_0$ is a union $$\Gamma_Z (t,p_0) = \bigcup_{i\in\mathbb{Z}}\{ \sigma_{i}(t,p_i) : t_i \leq t\leq t_{i+1} \}$$ of preserving-orientation local trajectories $\sigma_{i}(t,p_i)$ satisfying $\sigma_{i}(t_{i+1},p_i)=\sigma_{i+1}(t_{i+1},p_{i+1})=p_{i+1}$ and $t_i\rightarrow\pm\infty$ as $i\rightarrow\pm\infty$.
\end{definition}

\begin{definition}\label{def loops}
		 	A \bm{$k$}\textbf{-homoclinic loop} of a planar PSVF is a global trajectory of $Z=(X,Y)$ presenting $k$ distinct visible-visible two-fold singularities $p_1, \hdots, p_k$ in such a way that, after passes through $p_i$, the trajectory reaches $\Sigma$ either in $p_{i-1}$ or $p_{i+1}$ when $i=2,\hdots,k-1$. When $i=1$ (resp., $i=k$),  after passes through $p_i$, the trajectory reaches $\Sigma$  either in a sewing point or $p_{i+1}$ (resp., $p_{i-1}$).
		 	
		 	Moreover, an $\mathbf{\infty}$\textbf{-homoclinic loop} of a planar PSVF is a global trajectory of $Z$ presenting $\infty$ visible-visible two-fold singularities $p_1, p_2, \hdots$ in such a way that, after passes through $p_i$, the trajectory reaches $\Sigma$ either in $p_{i-1}$ or $p_{i+1}$. 
		 	
		 	The homoclinic loops $\Gamma$ defined above are \textbf{regular} if  $X(p)$ and $Y(p)$ point to opposite direction, for all two-fold singularity $p \in \Gamma$ (remember that $X(p)$ and $Y(p)$  are parallel). Otherwise, the homoclinic loop is \textbf{singular}.  Figures \ref{fig-equivalencias} and \ref{three-symbols} illustrate some regular $k$-homoclinic loops.
\end{definition}

Another important definition is the concept of equivalence between two PSVFs.

\begin{definition}\label{definicao sigma-equivalencia}
	Two PSVFs $Z=(X,Y), \,
	\widetilde{Z}=(\widetilde{X},\widetilde{Y}) \in \Omega^r$, defined in
 $U, \, \widetilde{U}$ respectively and with switching manifold $\Sigma$ and $\widetilde{\Sigma}$
	are \textbf{$\mathbf{\Sigma}$-equivalent} if there exists an
	orientation preserving homeomorphism $h: U \rightarrow
	\widetilde{U}$ that sends 
	$U\cap\Sigma$ to
	$\widetilde{U}\cap\widetilde{\Sigma}$, the orbits of  $X$ restricted to
	$U\cap\Sigma^+$ to the orbits of $\widetilde{X}$ restricted to
	$\widetilde{U}\cap\widetilde{\Sigma}^+$,  the orbits of  $Y$ restricted to
	$U\cap\Sigma^-$ to the orbits of $\widetilde{Y}$ restricted to
	$\widetilde{U}\cap\widetilde{\Sigma}^-$ and the orbits of  $Z^T$ restricted to
	$\Sigma$ to the orbits of $\widetilde{Z}^T$ restricted to
	$\widetilde{\Sigma}$.
\end{definition}

\subsection{Symbolic Dynamics}

\

Consider a set $\mathcal{A}_{k} $ with $ k $ elements (say $ \mathcal{A}_{k}= \{0,1, \dots, k-1\} $) with the discrete topology. Now, consider $ \mathcal{A}_{k}^{\Z} $,  i.e., all the sequences $ x=(x_j)_{j\in\Z} $, with $ x_j\in \mathcal{A}_{k},$ for all $ j $ and the product topology of all discrete topologies.


\begin{definition}
	Let $ x=(x_{j})_{j\in\Z} $ and $ y=(y_{j})_{j\in\Z} $ two elements of $ \mathcal{A}_{k}^{\Z} $. Define $ d:\mathcal{A}_{k}^{\Z}\times\mathcal{A}_{k}^{\Z}\to\R $ by:
	\[ d(x,y)=\sum_{j\in\Z}\frac{|x_{j}-y_{j}|}{2^{|j|}} \]
\end{definition}

\begin{definition}
	Define $ \sigma_{k}:\mathcal{A}_{k}^{\Z}\to\mathcal{A}_{k}^{\Z} $ given by $ \sigma((a_j))=(b_j) $, where $ b_j=a_{j+1} $. The map $ \sigma $ is called \textbf{two-sided full shift} and the discrete flow $ (\mathcal{A}_{k}^{\Z},\sigma) $ is called \textbf{symbolic flow} or \textbf{shift system}.
\end{definition}

\begin{proposition}\label{metrica-shift}
	The following holds:
	\begin{itemize}
		\item[(i)] The function $ d $ defined above is a metric on $ \mathcal{A}_{k}^{\Z} $ and induces the product topology;
		\item[(ii)] The space $ \mathcal{A}_{k}^{\Z} $ is a compact Hausdorff space;
		\item[(iii)] $\sigma_{k}$ is a homeomorphism.
	\end{itemize}
\end{proposition}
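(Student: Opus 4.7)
The statement is the standard package of properties of the two-sided shift space, so the plan is mostly bookkeeping. I would prove the three items in the order listed, leaning on the fact that $\mathcal{A}_k$ is finite.

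For item (i), I would first check that the series defining $d$ converges: since $|x_j-y_j|\leq k-1$ and $\sum_{j\in\Z} 2^{-|j|}=3$, the sum is bounded by $3(k-1)$. Non-negativity, symmetry and the condition $d(x,y)=0 \Leftrightarrow x=y$ follow termwise from the corresponding properties of the absolute value on $\Z$, and the triangle inequality follows termwise from $|x_j-z_j|\leq |x_j-y_j|+|y_j-z_j|$ and linearity of the sum. To identify the induced topology with the product topology, I would show the two bases refine each other. On the one hand, if $x\in \mathcal{A}_k^{\Z}$ and $N\in\N$, then every $y$ with $d(x,y)<2^{-N}$ must satisfy $x_j=y_j$ for all $|j|\leq N$ (otherwise a single term contributes at least $2^{-N}$), so the metric ball is contained in the cylinder fixing those coordinates. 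Conversely, given any cylinder fixing finitely many coordinates, any $y$ agreeing with $x$ on $\{-N,\dots,N\}$ satisfies $d(x,y)\leq (k-1)\sum_{|j|>N} 2^{-|j|}$, which can be made arbitrarily small by choosing $N$ large; hence metric balls are contained in cylinders and vice versa.

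For item (ii), Hausdorffness is automatic from (i). For compactness, the discrete space $\mathcal{A}_k$ is finite and hence compact, so by Tychonoff's theorem the product $\mathcal{A}_k^{\Z}$ is compact in the product topology, which by (i) equals the metric topology. (If one wishes to avoid Tychonoff, a direct diagonal/sequential compactness argument works because the product topology is metrizable and each factor is finite, so any sequence admits, coordinate by coordinate, a convergent subsequence, and a standard diagonal extraction produces a limit in the product.)

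For item (iii), $\sigma_k$ is clearly a bijection with inverse given by the shift in the opposite direction: $\sigma_k^{-1}((a_j))=(a_{j-1})$. To check continuity I would estimate $d(\sigma_k(x),\sigma_k(y))$. A direct bound reads
\begin{equation*}
d(\sigma_k(x),\sigma_k(y))=\sum_{j\in\Z}\frac{|x_{j+1}-y_{j+1}|}{2^{|j|}}=\sum_{\ell\in\Z}\frac{|x_\ell-y_\ell|}{2^{|\ell-1|}}\leq 2\sum_{\ell\in\Z}\frac{|x_\ell-y_\ell|}{2^{|\ell|}}=2\,d(x,y),
\end{equation*}
since $|\ell-1|\geq |\ell|-1$. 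The same estimate applied to $\sigma_k^{-1}$ shows both maps are Lipschitz, hence continuous, so $\sigma_k$ is a homeomorphism.

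The only step that requires any care is the identification of topologies in (i); the rest is routine. Since the result is classical, I would likely condense the argument and cite a standard reference (e.g. Katok--Hasselblatt or Lind--Marcus) rather than spelling out every estimate.
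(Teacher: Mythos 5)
Your proof is correct and follows essentially the same route as the paper: comparison with the geometric series for convergence, mutual refinement of metric balls and cylinders for the topology, Tychonoff for compactness, and a routine continuity check for the shift. The only cosmetic difference is in (iii), where you give an explicit Lipschitz bound $d(\sigma_k(x),\sigma_k(y))\leq 2\,d(x,y)$ while the paper simply observes that $\sigma_k$ maps basic open sets to basic open sets; both arguments are valid.
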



\begin{proof}	In \cite{Katok95}, p. 48. the authors assert that the results in items (i) and (ii) of this proposition are true, but without a proof. So, for completeness, we proceed the proof below.
	
	(i) The convergence of $ d $ follows from comparison test with the geometric series and the properties for it to be a metric follows easily from absolute value in the numerator. To see that the topology is the same, consider $ \mathcal{U} $ an open basic set and $ x\in\mathcal{U} $. Wlog, one can assume that $ \mathcal{U}=\prod\limits_{j<-\alpha}\mathcal{A}\times\{x_{-\alpha}\}\times\dots\times\{x_{\alpha}\}\times\prod\limits_{j>\alpha}\mathcal{A} $, for some $ \alpha\in\N $. That is, any $ y\in\mathcal{U} $ coincides with $ x $ in all entries between $ -\alpha $ and $ \alpha $.
	
	Let $ 0<r<\frac{1}{2^{\alpha}} $ and consider the open ball $ B=B(x,r) $. Let $ y\in B $. Suppose there exists $ \beta\in\Z $, $ -\alpha\leqslant\beta\leqslant\alpha $ such that $ y_{\beta}\neq x_{\beta} $. Then 
	
	\[ d(x,y)=\sum\limits_{j\in\Z}\frac{|x_{j}-y_{j}|}{2^{|j|}}\geqslant\frac{1}{2^{|\beta|}}\geqslant\frac{1}{2^{\alpha}}>r \]
	
	This is a contradiction, since $ y\in B $. Then $ y\in\mathcal{U} $.
	
	On the other hand, let $ x\in\mathcal{A}_{k}^{\Z} $ and $ B=B(x,r) $ an open ball. There exists $ \alpha\in\N $, such that $ \frac{k-1}{2^{\alpha-1}}< r $. Let $ \mathcal{U} $ be an basic open set as before. We have $ x\in\mathcal{U} $ and let $ y\in\mathcal{U} $. Then:
	
	\[ d(x,y)=\sum\limits_{j\in\Z}\frac{|x_{j}-y_{j}|}{2^{|j|}}= \sum\limits_{\substack{j\in\Z\\|j|>\alpha}}\frac{|x_{j}-y_{j}|}{2^{|j|}}\leqslant 2(k-1)\sum\limits_{j=\alpha+1}^{\infty}\frac{1}{2^{j}}= \frac{k-1}{2^{\alpha-1}}<r \]

	(ii) Each copy of $ \mathcal{A} $ is compact, then, by Tychonoff's Theorem, $ \mathcal{A}_{k}^{\Z} $ is compact. Moreover, it is a metric space, hence it is Hausdorff  
	
	(iii) It is enough to note that $ \sigma_{k} $ takes basic open set to basic open set.
	
\end{proof}

When it is clear by the context, the subscript $ k $ may be suppressed in the sequel.

\begin{definition}
	Let $ K\subset\mathcal{A}^{\Z} $. We say $ (K,\sigma) $ is a \emph{subshift} if $ K $ is closed and invariant for $ \sigma $.
\end{definition}

%
%
%
%

There are some works (see \cite{goncalves_sobottka_starling_2017} and \cite{Ott2014}) considering one and two sided shifts over enumerable alphabets. To the best of our knowledge, there is not a classic way to work with them, and when considering the full shift, it is difficult to define a topology over it in order to make it metrizable. 

In this work, we consider a shift over $ \Z $ presenting some restrictions over the sequences which are allowed to occur. These features allow us to define a metric over this space.

\begin{definition}\label{def-theta-infinito}
	Let $ \Theta_{\infty} \subset \Z^{\Z}$ be the set of bi-infinite sequences with integer entries $ (x_j)_{j\in\Z} $, such that 
	the difference between two consecutive entries is  at most 2  . That is:
	\[ \Theta_{\infty}=\{(x_{j})_{j\in\Z}\,\,|\,\,x_{j}\in\Z \text{ and } |x_{j+1}-x_{j}|\leqslant 2, \forall j\in\Z\} \]
\end{definition}

We can define over $ \Theta_{\infty} $ the same metric given before.

\begin{definition}
	Consider $ x=(x_{j})_{j\in\Z} $ and $ y=(y_{j})_{j\in\Z} $ two elements of $ \Theta_{\infty} $. Define $ d:\Theta_{\infty}\times\Theta_{\infty}\to\R $ by:
	\[ d(x,y)=\sum_{j\in\Z}\frac{|x_{j}-y_{j}|}{2^{|j|}} \]
\end{definition}

\begin{proposition}
	The function $ d $ defined above is a metric over $ \Theta_{\infty} $.
\end{proposition}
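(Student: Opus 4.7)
The plan is to verify the four metric axioms (non-negativity, identity of indiscernibles, symmetry, and the triangle inequality), but the key preliminary issue, in contrast to the case of $\mathcal{A}_k^{\Z}$ proved earlier, is that the entries of sequences in $\Theta_\infty$ are unbounded integers, so the series defining $d(x,y)$ does not obviously converge. Dealing with this convergence is the main obstacle; once it is settled, the remaining axioms are inherited essentially verbatim from the proof of Proposition \ref{metrica-shift}(i).

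First I would establish a crude linear growth bound on the entries. Since $|x_{j+1}-x_j|\leqslant 2$ for every $j\in\Z$, a telescoping argument gives $|x_j - x_0|\leqslant 2|j|$, hence $|x_j|\leqslant |x_0|+2|j|$ for all $j\in\Z$. Applying this simultaneously to $x,y\in\Theta_\infty$ yields the termwise estimate
\[
|x_j - y_j| \leqslant |x_0|+|y_0|+4|j|.
\]
Therefore
\[
d(x,y)=\sum_{j\in\Z}\frac{|x_j-y_j|}{2^{|j|}}\leqslant \bigl(|x_0|+|y_0|\bigr)\sum_{j\in\Z}\frac{1}{2^{|j|}}+4\sum_{j\in\Z}\frac{|j|}{2^{|j|}},
\]
and both series on the right converge (e.g.\ by the ratio test), so $d(x,y)$ is a finite non-negative real number for every pair $x,y\in\Theta_\infty$.

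Once convergence is in hand, the remaining axioms are immediate. Non-negativity and symmetry follow directly from the corresponding properties of the absolute value inside each summand. For the identity of indiscernibles, if $d(x,y)=0$ then every term $|x_j-y_j|/2^{|j|}$ vanishes, forcing $x_j=y_j$ for all $j\in\Z$; the converse is trivial. For the triangle inequality, applying $|x_j-z_j|\leqslant|x_j-y_j|+|y_j-z_j|$ termwise and summing (which is legitimate because all three series converge absolutely by the previous step) gives
\[
d(x,z)=\sum_{j\in\Z}\frac{|x_j-z_j|}{2^{|j|}}\leqslant \sum_{j\in\Z}\frac{|x_j-y_j|}{2^{|j|}}+\sum_{j\in\Z}\frac{|y_j-z_j|}{2^{|j|}}= d(x,y)+d(y,z),
\]
completing the verification that $d$ is a metric on $\Theta_\infty$.
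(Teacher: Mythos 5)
Your proposal is correct and follows essentially the same strategy as the paper: both reduce the convergence question to a linear growth bound of the form $|x_j-y_j|\leqslant C+4|j|$ (the paper telescopes the difference $|x_j-y_j|$ directly to get $C=|x_0-y_0|$, while you telescope each sequence separately to get the marginally weaker $C=|x_0|+|y_0|$, which is immaterial), and then dispatch the metric axioms in the standard way. No gaps.
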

\begin{proof}
	Once proven the convergence of $ d $, the properties for it to be a metric follows straightforward from the absolute value on numerator of the series.
	
	Let $ x=(x_{j})_{j\in\Z}, y=(y_{j})_{j\in\Z} \in\Theta_{\infty}$, then for all $ j\in\Z $:
	\[ |x_{j}-y_{j}|\leqslant|x_{j}-x_{j-1}|+|x_{j-1}-y_{j-1}|+|y_{j-1}-y_{j}|\leqslant |x_{j-1}-y_{j-1}|+4 \]
	
	Analogously, one can show: $ |x_{j}-y_{j}|\leqslant |x_{j+1}-y_{j+1}|+4 $. And, recursively, we obtain: 
	\[ \forall j\in\Z: |x_{j}-y_{j}|\leqslant |x_0-y_0|+4|j|. \] 
	
	Now we are in conditions to prove the convergence of $ d $. Given $ N\in\N $:
	
	\begin{align*}
	\sum_{j=-N}^{N}\frac{|x_{j}-y_{j}|}{2^{|j|}} 
	&= |x_0-y_0|+ \sum_{j=1}^{N}\frac{|x_{j}-y_{j}|}{2^{j}} +\sum_{j=1}^{N}\frac{|x_{-j}-y_{-j}|}{2^{j}}\leqslant\\
	&\leqslant |x_0-y_0|+ \sum_{j=1}^{N}\frac{|x_{0}-y_{0}|+4j}{2^{j}} +\sum_{j=1}^{N}\frac{|x_{0}-y_{0}|+4j}{2^{j}}\\
	&\leqslant |x_0-y_0| + 2|x_{0}-y_{0}|\sum_{j=1}^{N}\frac{1}{2^{j}}+8\sum_{j=1}^{N}\frac{j}{2^{j}}\leqslant\\
	&\leqslant 3|x_0-y_0|+16<\infty
	\end{align*}
	
	Making $ N\to\infty $, we have that $ d $ converges.
	
\end{proof}

Two important properties of a dynamical system $ (X,f) $ are defined below:

\begin{definition}\label{def-mixing}
	Let $ (X,f) $ be a dynamical system. We say that $ f $ is \emph{topologically transitive} if for every open sets $ \mathcal{U}, \mathcal{V} $, there exists $ n $, such that $ f^{n}(\mathcal{U})\cap\mathcal{V}\neq\emptyset $. And $ f $ is \emph{topologically mixing} if for every open sets $ \mathcal{U}, \mathcal{V} $, there exists $ n_{0} $, such that $ f^{n}(\mathcal{U})\cap\mathcal{V}\neq\emptyset $ for all $ n\geqslant n_{0} $.
\end{definition}

\begin{proposition}
	If $ f $ is topologically mixing, then $ f $ is topologically transitive.
\end{proposition}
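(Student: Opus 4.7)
The plan is to unwind the two definitions directly: topological mixing gives, for any open sets $\mathcal{U}$ and $\mathcal{V}$, an index $n_0$ beyond which $f^n(\mathcal{U}) \cap \mathcal{V}$ is non-empty for \emph{every} $n \geqslant n_0$, whereas topological transitivity only requires the existence of a \emph{single} such $n$. So transitivity should follow simply by specializing to $n = n_0$.

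Concretely, I would fix arbitrary open sets $\mathcal{U}, \mathcal{V} \subset X$, apply the mixing hypothesis to produce the integer $n_0 = n_0(\mathcal{U},\mathcal{V})$, and then take $n := n_0$ as the witness required by the transitivity definition. Since $f^{n_0}(\mathcal{U}) \cap \mathcal{V} \neq \emptyset$ by the mixing assumption (the inequality $n \geqslant n_0$ being satisfied trivially at $n=n_0$), the desired conclusion is immediate.

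There is no real obstacle here; the statement is a formal consequence of the definitions, as the condition defining mixing is strictly stronger than the one defining transitivity. The only very mild point worth noting is the implicit convention that in both definitions the open sets are understood to be non-empty (otherwise both conditions become vacuous in the same way), so the implication holds without any additional hypothesis on $X$ or $f$.
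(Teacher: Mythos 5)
Your argument is correct: with the paper's definitions, mixing for a pair $(\mathcal{U},\mathcal{V})$ yields an $n_0$ with $f^{n}(\mathcal{U})\cap\mathcal{V}\neq\emptyset$ for all $n\geqslant n_0$, and taking $n=n_0$ is exactly the witness transitivity asks for. The paper states this proposition without proof, evidently regarding it as immediate from the definitions, and your write-up (including the remark about non-empty open sets) is the same direct specialization anyone would make.
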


\begin{proposition}
	The two-sided shift $ \sigma_{k} $ is topologically mixing.
\end{proposition}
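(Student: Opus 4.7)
The plan is to verify topological mixing directly on a basis of the topology, using cylinder sets, since any nonempty open set contains such a cylinder.

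First I would reduce to basic open sets. Take nonempty open sets $\mathcal{U},\mathcal{V}\subset\mathcal{A}_k^{\Z}$ and pick $x\in\mathcal{U}$, $y\in\mathcal{V}$. By Proposition \ref{metrica-shift}(i), the metric $d$ induces the product topology, so there exist $\alpha,\beta\in\N$ such that the cylinders
\[
\mathcal{U}_{x,\alpha}=\{z\in\mathcal{A}_k^{\Z}:z_i=x_i,\,|i|\le\alpha\}\subset\mathcal{U},\qquad
\mathcal{V}_{y,\beta}=\{z\in\mathcal{A}_k^{\Z}:z_i=y_i,\,|i|\le\beta\}\subset\mathcal{V}.
\]

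Next I would construct, for every sufficiently large $n$, an explicit point in $\mathcal{U}\cap\sigma^{-n}(\mathcal{V})$. Set $n_0=\alpha+\beta+1$ and fix any $n\ge n_0$. Define $z=(z_j)_{j\in\Z}\in\mathcal{A}_k^{\Z}$ by
\[
z_j=\begin{cases} x_j, & -\alpha\le j\le\alpha,\\ y_{j-n}, & n-\beta\le j\le n+\beta,\\ 0, & \text{otherwise.}\end{cases}
\]
The choice $n\ge\alpha+\beta+1$ ensures $\alpha<n-\beta$, so the two prescribed blocks do not overlap and $z$ is well defined. By construction $z\in\mathcal{U}_{x,\alpha}\subset\mathcal{U}$, and $(\sigma^n z)_i=z_{i+n}=y_i$ for $|i|\le\beta$, so $\sigma^n(z)\in\mathcal{V}_{y,\beta}\subset\mathcal{V}$. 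Hence $\sigma^n(z)\in\sigma^n(\mathcal{U})\cap\mathcal{V}$, and this intersection is nonempty for every $n\ge n_0$, establishing topological mixing.

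There is essentially no obstacle here: the argument is the classical cylinder-concatenation trick, and the only thing to be careful about is that the two prescribed blocks do not collide, which is guaranteed by the choice $n_0=\alpha+\beta+1$. The rest of the work — checking that cylinders form a basis and that the shift preserves the alphabet — was already done in Proposition \ref{metrica-shift}.
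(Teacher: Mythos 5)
Your argument is correct: it is the standard cylinder-concatenation proof, and the only point requiring care (that the two prescribed blocks do not collide, guaranteed by $n_0=\alpha+\beta+1$) is handled properly. The paper states this proposition without proof, so your write-up supplies exactly the classical argument the authors implicitly rely on; there is nothing to compare it against and nothing to correct.
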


\begin{definition}
	The Hausdorff distance between the sets $A$ and $B$ is given by
	\[ d_H(A,B)=max \{ \sup_{x \in A} \inf_{y \in B}d(x,y), \sup_{y \in B} \inf_{x \in A} d(x,y)  \} \]
\end{definition}

\section{Proof of Theorem 1}

For each natural number $ k\geqslant 2 $, let:

\begin{equation}
P_{k}(x)=- \left(x+\frac{k-1}{2}\right) \left(x-\frac{k-1}{2}\right) \prod\limits_{i=1}^{k-1} \left(x-\left(i-\frac{k}{2}\right)\right)^2
\end{equation}

and 

\begin{equation}\label{polinomio Pinf}
P_{\infty}(x)= 1-\cos(2\pi x)
\end{equation}

\begin{lemma}\label{lema-raizes}
	Let $ P_{k} $ and $ P_{\infty} $ as defined above. Then:
	\begin{itemize}
		\item[(i)] $P_k$ has $2 k$ roots, being $2$ simple roots at $ r_0=\frac{1-k}{2}$ and $r_1=\frac{k-1}{2}$ and $k-1$ roots of multiplicity two at $p_j=j-\frac{k}{2}$ for $j=1,\dots,k-1$. Moreover, $P_{k}'(r_0)>0$, $P_{k}'(r_1)<0$, $P_{k}'(p_j)=0$ and $P_{k}''(p_j)>0$ for every $ j=1,\dots,k-1 $.
		\item[(ii)] $P_{\infty}$ has infinitely many roots placed at  $p_j=j$ for $j \in \Z$. Moreover $P_{\infty}'(p_j)=0$ and  $P_{\infty}''(p_j)>0$. As consequence,  each $p_j$ is a root of multiplicity two.
	\end{itemize}

\end{lemma}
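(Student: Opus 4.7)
The plan is to handle the two items separately, since item (ii) is a direct trigonometric calculation and item (i) requires reading off the factored form carefully and checking some signs.

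For item (i), I would begin by identifying the roots directly from the factorization \eqref{polinomio Pk}: the linear factors $(x+\tfrac{k-1}{2})$ and $(x-\tfrac{k-1}{2})$ contribute simple roots at $r_0$ and $r_1$, while each squared factor $(x-(i-\tfrac{k}{2}))^2$ contributes a double root at $p_i$. The distinctness of all these roots comes from the elementary inequality $|p_j|=|j-k/2|<(k-1)/2$ for $j=1,\dots,k-1$, which places every $p_j$ strictly between $r_0$ and $r_1$; this also gives a total of $2+2(k-1)=2k$ roots counted with multiplicity, matching the degree of $P_k$. Next, to compute $P_k'$ at the simple roots I would write $P_k(x)=(x-r_0)Q(x)$ with $Q$ continuous and nonzero at $r_0$, so that $P_k'(r_0)=Q(r_0)$; evaluating gives $P_k'(r_0)=(k-1)\prod_{i=1}^{k-1}(r_0-p_i)^2>0$, and the analogous argument at $r_1$ yields $P_k'(r_1)=-(k-1)\prod(r_1-p_i)^2<0$.

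For the double roots $p_j$, I would factor $P_k(x)=(x-p_j)^2 R_j(x)$ with
\[
R_j(x)=-\Bigl(x+\tfrac{k-1}{2}\Bigr)\Bigl(x-\tfrac{k-1}{2}\Bigr)\prod_{i\neq j}(x-p_i)^2.
\]
Differentiating twice, one checks the routine identities $P_k'(p_j)=0$ and $P_k''(p_j)=2R_j(p_j)$. Thus positivity of $P_k''(p_j)$ reduces to positivity of $R_j(p_j)$. Since the remaining product of squares is positive, the sign is determined by $-(p_j^2-((k-1)/2)^2)$; the same inequality $|p_j|<(k-1)/2$ used above forces $p_j^2-((k-1)/2)^2<0$, so $R_j(p_j)>0$ and therefore $P_k''(p_j)>0$, as required.

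For item (ii), I would just compute: $P_\infty(x)=1-\cos(2\pi x)=0$ exactly when $\cos(2\pi x)=1$, i.e.\ when $x\in\Z$, so the zero set is $\{p_j=j:j\in\Z\}$. Differentiation gives $P_\infty'(x)=2\pi\sin(2\pi x)$ and $P_\infty''(x)=4\pi^2\cos(2\pi x)$, hence $P_\infty'(p_j)=0$ and $P_\infty''(p_j)=4\pi^2>0$ for every $j\in\Z$. Taylor expanding around $p_j$ then shows each $p_j$ is a zero of multiplicity exactly two.

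There is no real obstacle here: the statement is essentially a bookkeeping exercise on the factorizations. The only step that requires a mild argument is the sign of $R_j(p_j)$ in item (i), which is controlled by the placement of the interior roots $p_j$ strictly between the boundary roots $r_0$ and $r_1$; once this positional inequality is made explicit, everything else is a direct calculation.
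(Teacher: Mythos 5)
Your proof is correct and follows essentially the same route as the paper: read the roots off the factored forms, isolate the relevant factor (the paper applies the product rule to $-g(x)h(x)$ where you factor out $(x-r_0)$ and $(x-p_j)^2$, which is the same computation), and settle the signs via the positional inequality $|p_j|<\tfrac{k-1}{2}$. Incidentally, your constants in item (ii) ($P_\infty'(x)=2\pi\sin(2\pi x)$, $P_\infty''(p_j)=4\pi^2$) are the correct ones, whereas the paper drops the factors of $\pi$; this does not affect the signs or the conclusion.
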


\begin{proof}
	The placement of those roots follows easily from the factored form of $ P_{k} $ and the expression for $ P_{\infty} $ presented in (\ref{polinomio Pk}) and (\ref{polinomio Pinf}).
	
	For the derivatives part, if $ k<\infty $, put $ g(x)=\left(x+\frac{k-1}{2}\right) \left(x-\frac{k-1}{2}\right) $ and $ h(x)=\prod\limits_{i=1}^{k-1} \left(x-\left(i-\frac{k}{2}\right)\right)^2 $, then $ P_{k}(x)=-g(x)h(x) $ and
	\[ P_{k}'(x)=-g(x)h'(x)-g'(x)h(x)=-g(x)h'(x)-2xh(x) \Rightarrow \]
	\[\Rightarrow P_{k}'(r_{0})=-2r_{0}h(r_{0})>0 \text{ and } P_{k}'(r_{1})=-2r_{1}h(r_{1})<0\]

	Since $ p_{j} $ are roots of multiplicity two of $ h(x) $, then $ h(x)=h'(x)=0 $, which implies $ P_{k}'(p_{j})= 0 $ and:
	\[ P_{k}''(p_{j})=-g(p_{j})h''(p_{j}) \]
	
	Now, we have
	\[ h''(p_{j})=2\prod\limits_{\substack{i=1\\i\neq j}}^{k-1} \left(x-\left(i-\frac{k}{2}\right)\right)^2 >0 \]
	
	And, if $  |x| <\frac{k-1}{2}  $:  
	\[ g(x)=\left(x-\frac{1-k}{2}\right)\left(x-\frac{k-1}{2}\right)< 0 \] 
	
	Thus, $ P_{k}''(p_{j})>0 $.

	For $ P_{\infty} $, we have:
	\[ P_{ \infty}'(x)=2\sin(2\pi x) \Rightarrow P_{ \infty}'(j)=0, \forall j\in\Z  \]
	and
	\[ P_{ \infty}''(x)=4\cos(2\pi x) \Rightarrow P_{ \infty}''(j)=1>0, \forall j\in\Z \]

\end{proof}

Now, for every $ k\geqslant 2 $ or $ k=\infty $, define:

\begin{equation}\label{eq Z k simbolos}
	Z_{k}(x,y)=\left\{
	\begin{array}{l} 
	X_{k}(x,y)= \left(1, P_{k}'(x) \right) ,\quad $for$ \quad y \geq 0 \\ 
	Y_{k}(x,y)= \left(-1,P_{k}'(x)\right),\quad $for$ \quad y \leq 0,
	\end{array}
	\right.
\end{equation}

\begin{lemma}[Tangencies of $ Z_{k} $]\label{lema tangencias}
	Let $ Z_{k} $ be as above, then the following holds:
	\begin{itemize}
		\item If $ k<\infty $ then $(r_0,0)$ and $(r_1,0)$ are crossing points of $ Z_{k} $;
		\item The points $(p_j,0)$ are visible-visible two folds of $Z_{k}$, $j=1,\dots,k-1$ (or $ j\in\Z $, if $ k=\infty $);
		\item $\Sigma^s \cup \Sigma^e = \emptyset$.
	\end{itemize}

\end{lemma}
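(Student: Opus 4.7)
The plan is to verify each bullet by a direct application of the classification of points on $\Sigma$ from Section 3.1, combined with the information about $P_k$ (and $P_\infty$) collected in Lemma \ref{lema-raizes}. Throughout, the switching manifold $\Sigma = \{y=0\}$ is cut out by $f(x,y) = y$, so $\nabla f = (0,1)$. The first step is therefore to compute the relevant Lie derivatives of $f$ along $X_k$ and $Y_k$ on $\Sigma$: a one-line check gives
$$X_k f(x,0) = P_k'(x) = Y_k f(x,0), \qquad X_k^2 f(x,0) = P_k''(x), \qquad Y_k^2 f(x,0) = -P_k''(x),$$
where the last two identities use $\nabla(X_k f) = \nabla(Y_k f) = (P_k''(x),0)$. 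The essential observation is that $X_k f$ and $Y_k f$ coincide identically on $\Sigma$, so their signs always agree.

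With these formulas in hand, the three bullets follow rapidly. For the first, if $k < \infty$, Lemma \ref{lema-raizes}(i) gives $P_k'(r_0) > 0$ and $P_k'(r_1) < 0$, so $X_k f \cdot Y_k f > 0$ at both $(r_0,0)$ and $(r_1,0)$, placing them in $\Sigma^{c+}$ and $\Sigma^{c-}$ respectively. For the second, Lemma \ref{lema-raizes} provides $P_k'(p_j) = 0$ and $P_k''(p_j) > 0$ for every admissible $j$ (finite range if $k < \infty$, all of $\Z$ if $k = \infty$); hence $(p_j,0)$ is a tangency of both $X_k$ and $Y_k$, with $X_k^2 f(p_j) > 0$ and $Y_k^2 f(p_j) < 0$. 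In the respective half-planes $\Sigma^{\pm}$ in which each field is defined, these inequalities correspond to the trajectory curving back into its own domain, which makes $(p_j,0)$ a visible fold of both $X_k$ and $Y_k$, i.e.\ a visible-visible two-fold. For the third, the equality $X_k f = Y_k f$ on $\Sigma$ prevents the products $X_k f \cdot Y_k f$ from ever being negative, so the defining conditions for $\Sigma^s$ (namely $X_k f < 0 < Y_k f$) and for $\Sigma^e$ ($X_k f > 0 > Y_k f$) are never met, giving $\Sigma^s \cup \Sigma^e = \emptyset$.

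The whole argument is a routine verification, and I do not anticipate a substantive obstacle. The only point that deserves a word of care is the reading of ``visibility'' for the lower field $Y_k$, which lives on $\Sigma^-$: the condition $Y_k^2 f(p_j) < 0$ means that the $Y_k$-trajectory through $(p_j,0)$ bends into $\{y \leq 0\}$, i.e.\ stays inside the domain of $Y_k$, which is exactly the geometric content of visibility for a field defined in $\Sigma^-$.
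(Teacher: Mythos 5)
Your proof is correct and follows essentially the same route as the paper: compute $X_kf$, $Y_kf$, $X_k^2f$, $Y_k^2f$ on $\Sigma=\{y=0\}$ and feed in the sign information from Lemma \ref{lema-raizes}. In fact you are slightly more careful than the printed proof, which asserts $Y_k^2f(p_j,0)=P_k''(p_j)>0$ (the correct value is $-P_k''(p_j)$, with visibility for $Y_k$ then read off as you explain) and which leaves the third bullet, $\Sigma^s\cup\Sigma^e=\emptyset$, implicit rather than deducing it from $X_kf\cdot Y_kf=(P_k'(x))^2\geqslant 0$ as you do.
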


\begin{proof}
	The switching manifold is given by $ \Sigma=f^{-1}(0) $, where $ f(x,y)=y $, then $ X_{k}f(r_{0},0).Y_{k}f(r_{0},0)=(P_{k}'(r_0))^{2}>0 $. Analogous for $ (r_{1},0) $.
	
	From Lemma \ref{lema tangencias}, $ X_{k}f(p_{j},0)=Y_{k}f(p_{j},0)=P_{k}'(p_{j})=0 $ and $ X_{k}^{2}f(p_{j},0)=Y_{k}^{2}f(p_{j},0)=P_{k}''(p_{j})>0 $.
\end{proof}

For each $ k<\infty $, let
\[\gamma_{k}^{X}= \{ (x,P_{k}(x)) \,\, | \,\, x\in[r_0,r_1]\} \mbox{ and } \gamma_{k}^{Y}=\{ (x,-P_{k}(x)) \,\, | \,\, x\in[r_0,r_1]\} \]
and, for $ k=\infty $,
\[ \gamma_{\infty}^{X}= \{ (x,P_{\infty}(x)) \,\, | \,\, x\in\R\} \mbox{ and } \gamma_{\infty}^{Y}=\{ (x,-P_{\infty}(x)) \,\, | \,\, x\in\R\}. \] 
Define $\Lambda_{k} = \gamma_{k}^{X} \cup \gamma_{k}^{Y} $ for both $ k<\infty $ or $ k=\infty $.

\begin{proposition}
	$\Lambda_{k}$ is an invariant set for $ Z_{k} $.
\end{proposition}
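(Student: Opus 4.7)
The plan is to exhibit, through every point of $\Lambda_k$, a global trajectory of $Z_k$ entirely contained in $\Lambda_k$. The two ingredients are (a) that $\gamma_k^X$ and $\gamma_k^Y$ are integral curves of $X_k$ and $Y_k$ respectively, and (b) that at each point where these curves meet $\Sigma$ (crossing endpoints and tangencies) one of the admissible continuations prescribed by Definition \ref{definicao trajetorias} stays on $\Lambda_k$.

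First I would verify the geometric placement. For $k<\infty$ the factor $-(x-r_0)(x-r_1)$ is non-negative on $[r_0,r_1]$ and $\prod_{i}(x-p_i)^2\geq 0$ is everywhere non-negative, so $P_k(x)\geq 0$ on $[r_0,r_1]$; hence $\gamma_k^X\subset \overline{\Sigma^+}$ and $\gamma_k^Y\subset \overline{\Sigma^-}$. For $k=\infty$ the inequality $1-\cos(2\pi x)\geq 0$ gives the same conclusion on all of $\R$. Next, the parameterization $\alpha(t)=(r_0+t,P_k(r_0+t))$ satisfies $\alpha'(t)=(1,P_k'(r_0+t))=X_k(\alpha(t))$, so $\alpha$ is an orbit of $X_k$ tracing $\gamma_k^X$ from $(r_0,0)$ to $(r_1,0)$ in time $r_1-r_0=k-1$; analogously $\beta(t)=(r_1-t,-P_k(r_1-t))$ is a $Y_k$-orbit tracing $\gamma_k^Y$ back from $(r_1,0)$ to $(r_0,0)$. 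The same computation, extended to $t\in\R$, works for $k=\infty$, and the resulting ODEs are complete because $\dot x=\pm 1$ and $\dot y$ stays bounded.

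The key step is the concatenation. By Lemma \ref{lema tangencias} the endpoints $(r_0,0)$ and $(r_1,0)$ (for $k<\infty$) lie in $\Sigma^c$, so by item (ii) of Definition \ref{definicao trajetorias} the trajectory is forced to switch from the $X_k$-flow to the $Y_k$-flow (and vice versa), keeping the orbit in $\Lambda_k$. At each two-fold $(p_j,0)$ the point is a regular tangency and $\Sigma^s\cup\Sigma^e=\emptyset$, so item (iv) of Definition \ref{definicao trajetorias} allows the trajectory to \emph{either} continue as $X_k$ (staying on $\gamma_k^X$) \emph{or} switch to $Y_k$. In the latter case a short computation, $y(s)=\int_0^sP_k'(p_j-u)\,du=-P_k(p_j-s)+P_k(p_j)=-P_k(p_j-s)$, shows the resulting arc lies on $\gamma_k^Y$. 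Thus both admissible continuations remain in $\Lambda_k$.

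Concatenating these segments produces, for every $p\in\Lambda_k$, a global trajectory $\Gamma_{Z_k}(\cdot,p)\subset\Lambda_k$ in the sense of Definition \ref{definicao trajetoria global}: each arc has fixed length bounded below (by $1$ for $k=\infty$ or between consecutive tangencies, and by $k-1$ for the full $X_k$-sweep when $k<\infty$), so the sequence $t_i\to\pm\infty$. The main obstacle I anticipate is precisely the bookkeeping at tangency points, where non-uniqueness of trajectories requires exhibiting \emph{at least one} admissible continuation that remains in $\Lambda_k$; this is settled by the dual observation above. With this, $\Lambda_k$ is invariant for $Z_k$, uniformly for $k\geq 2$ and for $k=\infty$.
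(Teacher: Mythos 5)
Your proposal is correct and follows essentially the same route as the paper: observe that $\gamma_k^X$ and $\gamma_k^Y$ are integral curves of $X_k$ and $Y_k$ lying in $\overline{\Sigma^+}$ and $\overline{\Sigma^-}$ respectively, and then use Lemma \ref{lema tangencias} together with Definitions \ref{definicao trajetorias} and \ref{definicao trajetoria global} to check that every admissible continuation at the crossing points $(r_0,0),(r_1,0)$ and at the two-folds $(p_j,0)$ stays on $\Lambda_k$. The paper states this in three sentences; you have simply carried out the computations it leaves implicit.
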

\begin{proof} 
	Note that $\gamma_{k}^{X} $ and $ \gamma_{k}^{Y} $ are integral curves of $X_{k}$ and $Y_{k}$, respectively. Moreover, $\gamma_{k}^{X}$ and $\gamma_{k}^{Y}$ coincide at $(p_j,0),$ for all $ j$ and at $ (r_0,0), (r_1,0) $ (in the case $ k<\infty $). By  Definitions \ref{definicao trajetorias}, \ref{definicao trajetoria global} and the description of these points given by Lemma \ref{lema tangencias} we obtain that $\Lambda_{k}$ is invariant.
\end{proof}

Our main goal is to prove the conjugacy between the time-one map of the fields $ Z_{k} $ restricted to $ \Lambda_{k} $ and a two-sided shift space. But, since a PSVF does not give uniqueness of trajectory through a point, a map $ T_{1}:\Lambda_{k}\to\Lambda_{k}, T_{1}(x)=\varphi(1,x) $, where $ \varphi $ is the flow, is not well-defined, for it may have more than one image (depending on the flow chosen). One way of avoiding this is to work with the space of all possible trajectories, so $ T_{1} $ will be well-defined. This strategy is inspired (and somewhat similar) to the one used in discrete dynamics when working with a non-invertible map, and constructing the \emph{inverse limit} of the map (see \cite{Katok03}). 

In that spirit, we will consider the set $\Omega_{k}=\{\gamma$ global trajectory of $Z_{k} \, | \, \gamma(0)\in \Lambda_{k}\}$ throughout this paper and we re-define the \emph{time-one map} as follows:

\begin{definition}\label{defi-time-one-map}
	We will call \emph{time-one map} the function $ T_1:\Omega_{k}\to\Omega_{k} $, given by $ T_1(\gamma)(.)=\gamma(.+1)$. 
\end{definition}

\begin{proposition}\label{flight-time}
	For any $ k<\infty $ (or $ k=\infty $), let $ \gamma \in\Omega_{k}$ be a global trajectory, then for all $ t\in\R $, there exists unique $ t^*\in[t,t+1) $ such that $ \gamma(t^*)\in\{(p_{j},0)\}_{j=1}^{k-1} $ (respectively, $ \gamma(t^*)\in\{(p_{j},0)\}_{j\in\Z} $).  
\end{proposition}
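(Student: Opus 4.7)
The plan is to reduce the statement to a single \emph{flight-time equality}: along any global trajectory $\gamma\in\Omega_k$, the elapsed time between two consecutive visits to the tangency set $\{(p_j,0)\}$ equals exactly $1$. Once this is established, the set of tangency-visit times of $\gamma$ forms an arithmetic progression with common difference $1$, and every half-open interval of length $1$ meets such a progression in exactly one point, which immediately yields both existence and uniqueness of $t^\ast\in[t,t+1)$.

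First I would record the geometric facts on $\Lambda_k$. Both components of $Z_k$ have horizontal velocity of absolute value $1$, since $X_k=(1,P_k')$ and $Y_k=(-1,P_k')$, so on every smooth piece of a trajectory the $x$-coordinate is an affine function of $t$ with slope $\pm 1$. By Lemma \ref{lema-raizes} the abscissae of consecutive tangencies satisfy $p_{j+1}-p_j=1$, and in the finite case $k<\infty$ the crossing points lie at $r_0=p_1-\tfrac{1}{2}$ and $r_1=p_{k-1}+\tfrac{1}{2}$.

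Next I would carry out the case analysis at a tangency point $(p_j,0)$. By Definitions \ref{definicao trajetorias} and \ref{definicao trajetoria global}, together with the visible-visible classification of Lemma \ref{lema tangencias}, a global trajectory exits $(p_j,0)$ either to the right along $\gamma_k^X$ or to the left along $\gamma_k^Y$. For an interior index $1<j<k-1$ (and for every $j\in\Z$ when $k=\infty$) both continuations reach an adjacent tangency, namely $(p_{j+1},0)$ on $\gamma_k^X$ or $(p_{j-1},0)$ on $\gamma_k^Y$, after the horizontal distance $1$ is covered at unit speed, hence in time exactly $1$. For the boundary indices $j=1$ or $j=k-1$ in the finite case, the $Y_k$-continuation from $(p_1,0)$ first reaches the crossing $(r_0,0)$ at time $1/2$, then by the crossing rule at $r_0$ switches to $\gamma_k^X$ and returns to $(p_1,0)$ after an additional time $1/2$; the symmetric detour through $r_1$ handles $p_{k-1}$. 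In every case the next tangency visit occurs at flight time precisely $1$.

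Finally I would verify that every $\gamma\in\Omega_k$ visits at least one tangency. Since $\gamma(0)\in\Lambda_k=\gamma_k^X\cup\gamma_k^Y$ lies on one of the two arcs, following the flow forward at unit horizontal speed reaches a tangency $(p_j,0)$ in some time $\tau\in[0,1)$, possibly after a crossing through $r_0$ or $r_1$ in the finite case. Setting $s_0=\tau$ and applying the flight-time equality inductively both forward and backward in time shows that the tangency-visit set of $\gamma$ is exactly $\{s_0+n:n\in\Z\}$, so the interval $[t,t+1)$ intersects this progression in one and only one element, which is the desired $t^\ast$.

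The only delicate point in this plan is the boundary case at $p_1$ and $p_{k-1}$, where one must check that the detour through the crossing $r_0$ or $r_1$ produces a flight time of exactly $1$ and not something shorter. The matching of the two half-distances $1/2$ from Lemma \ref{lema-raizes} with the crossing classification of Lemma \ref{lema tangencias} is precisely what makes the bookkeeping work.
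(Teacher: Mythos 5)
Your argument is correct and follows exactly the route the paper intends: the paper's own proof is a one-line assertion that the claim ``follows immediately from the expression of $Z_k$ and Lemmas \ref{lema-raizes} and \ref{lema tangencias}'', and your unit-horizontal-speed, unit-spacing, and half-distance-detour bookkeeping is precisely the omitted verification. No gaps; the boundary cases through $r_0$ and $r_1$ are handled correctly.
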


\begin{proof} Follows immediately from the expression of $ Z_{k} $ and Lemmas \ref{lema-raizes} e \ref{lema tangencias}.
	%
	%
	%
	%
	%
	
\end{proof}


The region $ \Lambda_{k} $ can be partitioned into arcs that goes from $ p_{j} $ to the adjacent ones ($ p_{j+1} $ and $ p_{j-1} $) or to itself (only when $ k<\infty $). So, consider $ k<\infty $ to be fixed and let $ I_0=\{(x,P_{k}(x)), x\in [r_0,p_1)\}\cup \{(x,-P_{k}(x)), x\in [r_0,p_1)\} $, i.e., the arc from $ p_1 $ to itself passing through $ r_0 $. For any $ j=1,\dots,k-2 $, let $ I_{2j-1}=\{(x,P_{k}(x)), x\in (p_{1},p_{2})\} $ and $ I_{2j}=\{(x,-P_{k}(x)), x\in (p_{1},p_{2})\} $, i.e, the arcs from $ p_j $ to $ p_{j+1} $ and from $ p_{j+1} $ to $ p_{j} $, respectively. And, $ I_{2k-3}=\{(x,P_{k}(x)), x\in (p_{k-1},r_{1}]\}\cup \{(x,-P_{k}(x)), x\in (p_{k-1},r_{1}]\} $. In short, we enumerate these arcs top to bottom, left to right (see Figure \ref{three-symbols}).

For the case $ k=\infty $ we will separate $ \Lambda_{\infty} $ into arcs, in the same way as we done for the  finite case: let $ I_{2j}=\{(x,P_{\infty}(x))\,\,|\,\,j<x<j+1\} $ and $ I_{2j+1}=\{(x,-P_{\infty}(x))\,\,|\,\,j<x<j+1\} $.

\begin{definition} 
	Let $ s:\Omega_{k}\to\Theta_{2k-2} $, given by $ s(\gamma)=(s_j(\gamma))_{j\in\Z} $ where:
	
	\begin{equation*}
	s_j(\gamma)=\left\{
	\begin{array}{l} 
	n,\quad $if$ \quad \gamma(j) \in I_n \\ 
	m,\quad $if$ \quad \gamma(j)  \in \{(p_{l},0)\} \mbox{ and } \gamma(j+1/2)\in I_m 
	\end{array}\right.
	\end{equation*}
	
	The sequence $ s(\gamma) $ is called the \emph{itinerary} of $ \gamma $.
\end{definition}

For $ k<\infty $, it is clear that $s$ is well-defined. For $ k=\infty $ we have to analyse if $ s(\gamma) \in\Theta_{\infty} $. From Definitions \ref{definicao trajetorias} and \ref{definicao trajetoria global}, of local and global trajectories, we see that if $ \gamma $ goes through some compartment $ I_{2l} $, then the next one must be the one below it ($ I_{2l+1} $) or the one to its right ($ I_{2l+2} $) and if it goes through $ I_{2l+1} $ then the following one must be above it ($ I_{2l} $) or the one to its left ($ I_{2l-1} $). In any case, $ 0<|s_{j}(\gamma)-s_{j+1}(\gamma)|\leqslant 2 $, so $ s(\gamma)\in\Theta_{\infty} $. From Proposition \ref{flight-time}, $ (s_{j}(\gamma))_{j\in\Z} $ encodes every compartment $ I_{j} $ that the trajectory $ \gamma $ visits in positive and negative time.

Note that, given $ \gamma\in\Omega_k $, there exists an infinity amount of different trajectories with the same itinerary of $ \gamma $, simply by changing the initial condition of it, without modifying the compartment it is located. To avoid such situation, we will consider two trajectories with the same itinerary to be equivalent. That is, we consider the equivalence relation:

\begin{definition}
	Let $ \gamma_1,\gamma_2 \in\Omega_{k}$. We say $ \gamma_1\sim\gamma_2 $ if and only if $ s(\gamma_1)=s(\gamma_2) $. Denote $ \overline{\Omega}_{k}=\faktor{\Omega_{k}}{\sim} $.
\end{definition}

\begin{remark}
	Observe that, given $ \overline{\gamma}\in\overline{\Omega}_{k} $, there exists a representative $ \gamma^*\in \overline{\gamma} $ such that $ \gamma^*(0)\in\{(p_{j},0), l=1,\dots,k-1\} $, because if $ \gamma(0)\in\{(p_{j},0), l=1,\dots,k-1\} $, simply take $ \gamma^*=\gamma $, if not, by Proposition \ref{flight-time} there exists unique $ -1<t^*<0 $ such that $ \gamma(t^*)\in\{(p_{j},0), l=1,\dots,k-1\} $. Moreover, if $ s(\gamma)=(s_{j})_{j\in\Z} $, then $ \gamma((t^*+j,t^*+j+1)) = I_{s_{j}} $. Which implies that $\gamma^*((j,j+1))= I_{s_{j}} $ and, consequently, $ \gamma^*(j+\frac{1}{2})\in I_{s_{j}} $. Then $ s(\gamma^*)=s(\gamma) $.
\end{remark}

In the following we construct a metric on the space $ \overline{\Omega}_{k} $, but for this, we use the Hausdorff distance  between two closed sets that is defined below:

\begin{definition}
	The Hausdorff distance between the sets $A$ and $B$ is given by
	\[ d_H(A,B)=max \{ \sup_{x \in A} \inf_{y \in B}d(x,y), \sup_{y \in B} \inf_{x \in A} d(x,y)  \} \]
\end{definition}

\begin{definition}
	Now, define $ \rho_{k}:\overline{\Omega}_{k}\times\overline{\Omega}_{k}\to\R $, by  
	\[ \rho_{k}(\overline{\gamma}_1,\overline{\gamma}_2)= \sum\limits_{i=-\infty}^{\infty}\frac{d_i(\overline{\gamma_1},\overline{\gamma_2})}{2^{|i|}} \]
	where $d_i(\overline{\gamma_1},\overline{\gamma_2})=d_H(\gamma_1^{*}([i,i+1]),\gamma_2^{*}([i,i+1]))$, $ d_H$ is the Hausdorff distance and $ \gamma_1^*, \gamma_2^* $ are those representatives given in the previous remark.
	
\end{definition}

In order to simplify notation, in the sequel we only refer to $ \gamma \in \overline{\Omega}_{k}$, meaning the equivalence class $ \overline{\gamma} $ with the representative $ \gamma^* $.


\begin{proposition}\label{prop-omega-metric}
	$ (\overline{\Omega}_{k},\rho_{k}) $ is a metric space.
\end{proposition}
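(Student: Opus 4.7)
The plan is to verify the four metric axioms for $\rho_k$, with the preliminary (and principal) task being to show that the defining series converges for every pair of equivalence classes. Because we systematically pick the canonical representative $\gamma^*$ whose image at $t=0$ is a two-fold singularity, the numbers $d_i(\overline{\gamma_1},\overline{\gamma_2})$ are well-defined on the quotient space, so $\rho_k$ is unambiguously a function $\overline{\Omega}_k \times \overline{\Omega}_k \to [0,\infty]$.

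To show convergence of the series, I would split the argument by cases. For $k<\infty$, the set $\Lambda_k$ is compact, hence there is a uniform bound $M>0$ with $d_H(A,B)\leq M$ for every pair of nonempty closed subsets $A,B\subset \Lambda_k$. Then $\rho_k \leq M \sum_{i\in\Z} 2^{-|i|} = 3M$, and in particular the series converges. For $k=\infty$, I would mimic the linear-growth argument already used to show convergence of the metric on $\Theta_\infty$. Proposition \ref{flight-time} guarantees that $\gamma_j^*([i,i+1])$ lies in the closure of the single compartment $I_{s_i(\gamma_j)}$, and the compartments $\overline{I_n}$ have uniformly bounded diameter but drift linearly in $n$ along the $x$-axis. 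Since consecutive symbols of any itinerary differ by at most $2$, iteration yields $|s_i(\gamma_1)-s_i(\gamma_2)|\leq |s_0(\gamma_1)-s_0(\gamma_2)|+4|i|$, which translates into a bound of the form $d_i(\overline{\gamma_1},\overline{\gamma_2})\leq C_1+C_2|i|$ for constants depending only on the two itineraries. Then the series is dominated by $\sum_{i\in\Z}(C_1+C_2|i|)/2^{|i|}<\infty$.

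Once convergence is established, the metric axioms are essentially mechanical. Non-negativity and symmetry are inherited term-by-term from the Hausdorff distance $d_H$, and the triangle inequality $\rho_k(\overline{\gamma_1},\overline{\gamma_3})\leq \rho_k(\overline{\gamma_1},\overline{\gamma_2})+\rho_k(\overline{\gamma_2},\overline{\gamma_3})$ follows by summing the triangle inequalities for $d_H$ applied to each slice $[i,i+1]$, weighted by $2^{-|i|}$. The only axiom requiring a short extra argument is identity of indiscernibles: if $\rho_k(\overline{\gamma_1},\overline{\gamma_2})=0$, then every term vanishes, so $\gamma_1^*([i,i+1])=\gamma_2^*([i,i+1])$ as closed sets for every $i\in\Z$; since each such piece is contained in the closure of a unique compartment $I_{s_i(\gamma_j)}$, the two itineraries must coincide, whence $\overline{\gamma_1}=\overline{\gamma_2}$.

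The main obstacle is clearly the $k=\infty$ case of the convergence step: the ambient set $\Lambda_\infty$ is not compact, and the Hausdorff distance between two compartments far apart on the $x$-axis is large. The resolution is to exploit the geometric constraint that the symbols can only jump by at most $2$ at each time step, so the spatial separation between $\gamma_1^*([i,i+1])$ and $\gamma_2^*([i,i+1])$ can grow at most linearly in $|i|$, which the exponential weight $2^{-|i|}$ easily absorbs.
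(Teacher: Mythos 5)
Your proposal is correct and follows essentially the same route as the paper: compactness of $\Lambda_k$ gives a uniform bound for $k<\infty$, a linear-in-$|i|$ growth bound absorbed by the weights $2^{-|i|}$ handles $k=\infty$, and the metric axioms then follow termwise from the corresponding properties of $d_H$. The only cosmetic difference is that you derive the linear bound on $d_i$ via the itinerary constraint $|s_{j+1}-s_j|\leqslant 2$, whereas the paper obtains $d_i\leqslant d_{i-1}+4\sqrt{5}$ directly from the fact that consecutive arcs of a single trajectory lie in a box of diameter $2\sqrt{5}$; both yield the same estimate.
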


\begin{proof}
	Let us show that $ \rho_{k} $ is well-defined, i.e., the series in the previous definition converges. For $ k<\infty $, there exists $ M>0 $ such that $ d_i(\gamma_1,\gamma_2)\leqslant M $, for any $ i\in\Z $, since $ \gamma_1([i,i+1]), \gamma_2([i,i+1]) $ are closed subsets of $ \Lambda_{k} $ which is compact. So $ \rho_{k}(\gamma_1,\gamma_2)\leqslant M\left( 1 + 2\sum\limits_{i=1}^{\infty}\frac{1}{2^i}\right)=3M $, i.e. it converges for any $ \gamma_1, \gamma_2 \in \overline{\Omega}_{k}$. 
	
	For the case $ k=\infty $, note that, $ \Lambda_{\infty} $ is contained in the horizontal strip $ \R\times [-2,2] $. Then, for any $ \gamma\in\overline{\Omega}_{\infty} $, the arcs $ \gamma([i-1,i]), \gamma([i,i+1])\subset [\lambda, \lambda+2]\times[-2,2]  $ with diameter $ 2\sqrt{5} $. Then, for every $ i\in\Z $:
	\[d_{H}(\gamma([i-1,i]),\gamma([i,i+1])) \leqslant 2\sqrt{5} \]
	
	Now, given $ \gamma_{1},\gamma_{2}\in\overline{\Omega}_{\infty} $ for every $ i\in\Z $:

	\begin{align*}
	d_{i}(\gamma_{1},\gamma_{2}) &= 
	d_{H}(\gamma_{1}([i,i+1]),\gamma_{2}([i,i+1]))\\
	&\leqslant 
	\begin{aligned}[t]
	d_{H}(\gamma_{1}([i,i+1]),\gamma_{1}([i-1,i]))+d_{H}(\gamma_{1}([i-1,i]),\gamma_{2}([i-1,i]))\\
	+d_{H}(\gamma_{2}([i-1,i]),\gamma_{2}([i,i+1]))
	\end{aligned}\\
	&\leqslant d_{H}(\gamma_{1}([i-1,i]),\gamma_{2}([i-1,i]))+4\sqrt{5}\\
	&\leqslant d_{i-1}(\gamma_{1},\gamma_{2})+4\sqrt{5}	
	\end{align*}
	
	In an analogous way, can be shown that 
	\[ d_{i-1}(\gamma_{1},\gamma_{2})\leqslant d_{i}(\gamma_{1},\gamma_{2})+4\sqrt{5} \]

	Recursively, follows that for every $ i\in\Z $:
	\[ d_{i}(\gamma_1,\gamma_2)\leqslant d_{0}(\gamma_1,\gamma_2)+4\sqrt{5}\left|i\right| \] 
	
	So we have
	\begin{align*}
	\rho_{\infty}(\gamma_1,\gamma_2) 
	&= \sum\limits_{i=-\infty}^{\infty}\frac{d_i(\gamma_1,\gamma_2)}{2^{|i|}} \leqslant \sum\limits_{i=-\infty}^{\infty}\frac{d_0(\gamma_1,\gamma_2)+4\sqrt{5}\left|i\right|}{2^{|i|}}\\
	&\leqslant 3 d_{0}(\gamma_1,\gamma_2)+8\sqrt{5}\sum\limits_{i=1}^{\infty}\frac{i}{2^{i}}<\infty.
	\end{align*}	
	
	So $ \rho_{k} $ is well defined for every $ k\in\N\cup\{\infty\} $. Let us show it is a metric: every summand in the definition of $ \rho_{k} $ is non-negative, then $ \rho_{k}(\gamma_1,\gamma_2)=0 $ if and only if $ d_i(\gamma_1,\gamma_2)=0$ for every $ i\in\Z $. Hence $ \gamma_1([i,i+1])=\gamma_2([i,i+1]),$ for all $ i\in\Z$. Then $ \gamma_1=\gamma_2 $.
	
	From $ d_i(\gamma_1,\gamma_2)=d_i(\gamma_2,\gamma_1) $, we obtain $ \rho_{k}(\gamma_1,\gamma_2)=\rho_{k}(\gamma_2,\gamma_1) $. 
	
	Now, for every $ i\in\Z $, we get: 
	\[ d_i(\gamma_1,\gamma_2)\leqslant d_i(\gamma_1,\gamma_3)+d_i(\gamma_3,\gamma_2) \Rightarrow\]
	\[ \sum\limits_{i=-N}^{N}\frac{d_i(\gamma_1,\gamma_2)}{2^{|i|}} \leqslant \sum\limits_{i=-N}^{N}\frac{d_i(\gamma_1,\gamma_3)}{2^{|i|}} + \sum\limits_{i=-N}^{N}\frac{d_i(\gamma_3,\gamma_2)}{2^{|i|}}, \forall N \Rightarrow \]
	
	\[ \rho_{k}(\gamma_1,\gamma_2)\leqslant \rho_{k}(\gamma_1,\gamma_3)+\rho_{k}(\gamma_3,\gamma_2) \]
	
	Therefore, $ (\overline{\Omega}_{k},\rho_{k}) $ is a metric space.
\end{proof}

Let $ \overline{T_{1}}:\overline{\Omega}_k \to\overline{\Omega}_k $ the function induced by $ T_{1} $, that is, $ \overline{T_{1}}(\overline{\gamma})=\overline{T_{1}(\gamma)} $. It does not depend on the representative, for if $ s(\gamma_{1})=s(\gamma_{2})=(s_{j})_{j\in\Z} $, then, for all $ j\in\Z $:
\[\gamma_{1}(j),\gamma_{2}(j)\in I_{s_{j}} \Rightarrow \gamma_{1}(j+1),\gamma_{2}(j+1)\in I_{s_{j+1}} \]
\[\Rightarrow T_{1}(\gamma_{1})(j),T_{1}(\gamma_{2})(j)\in I_{s_{j+1}} \Rightarrow s(T_{1}(\gamma_{1}))=s(T_{1}(\gamma_{2})). \]

\begin{proposition}
	The function $\overline{T_1} $ given above is a homeomorphism.
\end{proposition}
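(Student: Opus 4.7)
The plan is to prove three properties: bijectivity, continuity of $\overline{T_1}$, and continuity of the inverse. For bijectivity, I would exhibit the inverse explicitly as $\overline{T_{-1}}$, where $T_{-1}(\gamma)(\cdot)=\gamma(\cdot-1)$. This map descends to $\overline{\Omega}_k$ by exactly the same argument already given for $T_1$ (a shift in time induces a shift in the itinerary, so $\sim$-classes are preserved), and the compositions $\overline{T_1}\circ\overline{T_{-1}}$ and $\overline{T_{-1}}\circ\overline{T_1}$ are the identity on $\overline{\Omega}_k$.

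The crucial structural ingredient for the metric estimate is the identity $(T_1\gamma)^{*}=T_1(\gamma^{*})$, i.e.\ that $T_1$ sends canonical representatives (those with $\gamma^{*}(0)$ a tangency $p_j$) to canonical representatives. To verify this I would use the geometry of $Z_k$: by Lemma \ref{lema-raizes} the tangencies $p_j$ have $x$-coordinates spaced exactly $1$ apart, and the horizontal components of $X_k$ and $Y_k$ are $\pm 1$, so by Proposition \ref{flight-time} the flight time between two consecutive tangencies along any admissible global trajectory is exactly $1$. Consequently $\gamma^{*}(n)$ is itself a tangency for every $n\in\Z$, and therefore $T_1(\gamma^{*})$ inherits the defining property of a canonical representative.

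With this identity in hand, substituting $(T_1\gamma)^{*}=T_1(\gamma^{*})$ into the definition of $\rho_k$ and reindexing $j=i+1$ yields
\[
\rho_k\bigl(\overline{T_1}(\overline{\gamma}_1),\overline{T_1}(\overline{\gamma}_2)\bigr)
=\sum_{i\in\Z}\frac{d_H\bigl(\gamma_1^{*}([i+1,i+2]),\gamma_2^{*}([i+1,i+2])\bigr)}{2^{|i|}}
=\sum_{j\in\Z}\frac{d_j(\gamma_1^{*},\gamma_2^{*})}{2^{|j-1|}}
\leq 2\,\rho_k(\overline{\gamma}_1,\overline{\gamma}_2),
\]
where the last inequality uses the elementary bound $|j-1|\geq |j|-1$. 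Hence $\overline{T_1}$ is $2$-Lipschitz, in particular continuous; applying the same estimate to $\overline{T_{-1}}$ (with the reindexing $j=i-1$) shows that $\overline{T_1}^{-1}=\overline{T_{-1}}$ is also $2$-Lipschitz, so $\overline{T_1}$ is a homeomorphism.

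The main obstacle will be justifying the canonical-representative identity $(T_1\gamma)^{*}=T_1(\gamma^{*})$; the rest is routine series manipulation. This verification depends on the specific placement of the roots of $P_k$ and on the piecewise-constant horizontal speed $\pm 1$, and it must be checked in both the finite case (where the trajectory may reflect through $r_0$ or $r_1$ after a half-unit traversal from $p_1$ or $p_{k-1}$, and two such half-units compose to the required unit gap between tangencies) and the case $k=\infty$ (where the integer spacing $p_j=j$ makes the argument immediate). Once this geometric fact is in place, continuity in both directions follows at once from the Lipschitz estimate displayed above.
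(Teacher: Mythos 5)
Your proof is correct and follows essentially the same route as the paper: exhibit the inverse $\gamma(\cdot)\mapsto\gamma(\cdot-1)$, check it descends to the quotient, and obtain the $2$-Lipschitz bound for both directions by reindexing the series defining $\rho_k$. The only place you go beyond the paper is in explicitly justifying that $T_1$ carries canonical representatives to canonical representatives via the unit flight times of Proposition \ref{flight-time}; the paper uses this fact silently when asserting $d_i(\overline{T_1}(\gamma_1),\overline{T_1}(\gamma_2))=d_{i+1}(\gamma_1,\gamma_2)$, so your extra care fills a small implicit step rather than taking a different approach.
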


\begin{proof} 
	The function $ T_1 $ clearly is invertible, with inverse $ \left(T_1\right)^{-1}(\gamma)(.)=\gamma(.-1) $, and it is straightforward to see that $ \overline{T_{1}}^{-1}=\overline{T_{1}^{-1}} $.
	
	Now,  
	\begin{align*}
	d_i(\overline{T_1}(\gamma_1),\overline{T_1}(\gamma_2))&= d_H(T_1(\gamma_1)([i,i+1]), T_1(\gamma_2)([i,i+1])) =\\
	&= d_H(\gamma_1([i+1,i+2]), \gamma_2([i+1,i+2]))=d_{i+1}(\gamma_1,\gamma_2).
	\end{align*}
	
	Then:
	\begin{align*}
	\rho(\overline{T_1}(\gamma_1),\overline{T_1}(\gamma_2))&= \sum_{i=-\infty}^{\infty}\frac{d_i(T_1(\gamma_1),T_1(\gamma_2))}{2^{|i|}}
	=\sum_{i=-\infty}^{\infty}\frac{d_{i+1}(\gamma_1,\gamma_2)}{2^{|i|}}=\\
	&= \lim\limits_{k\to\infty}\left(\sum_{i=0}^{k}\frac{d_{i+1}(\gamma_1,\gamma_2)}{2^{i}}+ \sum_{i=-k}^{-1}\frac{d_{i+1}(\gamma_1,\gamma_2)}{2^{-i}} \right)=\\
	&=\lim\limits_{k\to\infty}\left( 2\sum_{i=0}^{k}\frac{d_{i+1}(\gamma_1,\gamma_2)}{2^{i+1}}+ \frac{1}{2}\sum_{i=-k}^{-1}\frac{d_{i+1}(\gamma_1,\gamma_2)}{2^{-i-1}} \right)\leqslant\\
	&\leqslant 2\lim\limits_{k\to\infty}\left( \sum_{j=1}^{k+1}\frac{d_{j}(\gamma_1,\gamma_2)}{2^{j}}+ \sum_{j=-k+1}^{0}\frac{d_{j}(\gamma_1,\gamma_2)}{2^{-j}} \right)=\\
	&\leqslant 2 \sum_{j=-\infty}^{\infty}\frac{d_j(\gamma_1,\gamma_2)}{2^{|j|}}=2\rho(\gamma_1,\gamma_2).
	\end{align*}
	
	Hence $ T_1 $ is continuous. The proof of continuity of the inverse is analogous.
\end{proof}

Now let $ \overline{s}:\overline{\Omega}_{k}\to \{0,1,\dots,2k-3\}^{\Z}  $ the function induced by $ s $, that is, $ \overline{s}(\gamma)=s(\gamma) $. It does not depend on the representative chosen, because of the equivalence relation and it is one-to-one.

\begin{proposition}\label{s-homeo}
	The map $ \overline{s} $ is a homeomorphism onto its image.
\end{proposition}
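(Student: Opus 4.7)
The plan is to establish continuity in both directions; injectivity of $\overline{s}$ was already recorded in the text preceding the statement, and surjectivity onto its image is tautological. The geometric fact I would set up at the outset is the identification, for the canonical representative $\gamma^{*}$ with $\gamma^{*}(0)\in\{(p_{l},0)\}$ provided by the earlier remark, of the arc $\gamma^{*}([i,i+1])$ with the closure $\overline{I_{s_{i}(\gamma)}}$. This uses Proposition~\ref{flight-time} together with the fact that the horizontal component of $Z_{k}$ is $\pm 1$ and the tangency points are spaced by unit intervals, forcing the consecutive tangential crossings of $\gamma^{*}$ to occur exactly at integer times. Consequently,
\[ d_{i}(\gamma_{1},\gamma_{2})=d_{H}\bigl(\overline{I_{s_{i}(\gamma_{1})}},\overline{I_{s_{i}(\gamma_{2})}}\bigr), \]
which vanishes precisely when the two symbols agree.

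The second step is the dichotomy $d_{i}\in\{0\}\cup[\delta_{0},\infty)$ for some uniform $\delta_{0}>0$. For $k<\infty$ this is immediate because the family of compartments is finite. For $k=\infty$ the compartments are translates of each other in $x$, so by translation invariance the Hausdorff distance depends only on a finite amount of relative data whenever $|n-m|$ is bounded and tends to infinity with $|n-m|$; a short case analysis on the smallest configurations (adjacent top-bottom arcs over a common interval and adjacent same-height arcs) yields the bound. With this in hand, continuity of $\overline{s}$ is quick: $\rho_{k}(\gamma_{n},\gamma)<\delta_{0}/2^{N}$ forces $d_{i}(\gamma_{n},\gamma)<\delta_{0}$ and hence $s_{i}(\gamma_{n})=s_{i}(\gamma)$ for every $|i|\leqslant N$, while the tail $\sum_{|i|>N}|s_{i}(\gamma_{n})-s_{i}(\gamma)|/2^{|i|}$ is controlled by the $\Theta_{\infty}$ growth bound $|s_{i}|\leqslant|s_{0}|+2|i|$ derived earlier, together with the fact that $s_{0}(\gamma_{n})=s_{0}(\gamma)$ eventually.

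For continuity of $\overline{s}^{-1}$ on its image I would run the analogous argument in reverse: if $d(s(\gamma_{n}),s(\gamma))<2^{-N}$ then $|s_{i}(\gamma_{n})-s_{i}(\gamma)|<1$ for $|i|\leqslant N$, so integrality forces the symbols to agree on that range and $d_{i}(\gamma_{n},\gamma)=0$ there. The tail of $\rho_{k}$ is handled by a matching linear upper bound $d_{H}(\overline{I_{n}},\overline{I_{m}})\leqslant C(|n-m|+1)$, combined once more with the $\Theta_{\infty}$ growth estimate. The main technical obstacle I anticipate is the $k=\infty$ case: both the uniform lower bound $\delta_{0}$ and the control of the tails must be derived from the translation-equivariant geometry of the compartments and from the intrinsic growth constraint in $\Theta_{\infty}$, whereas the $k<\infty$ case follows from the same scheme in an essentially routine way thanks to the compactness of $\Lambda_{k}$ and the finiteness of the alphabet.
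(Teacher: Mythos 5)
Your proposal follows essentially the same route as the paper's proof: a uniform positive lower bound on the nonzero values of $d_{i}$ (your $\delta_{0}$ is the paper's $\mu$) forces symbol agreement on a central block of indices, and the tails in both directions are controlled by the linear growth estimates available in $\Theta_{\infty}$, exactly as in the paper. The only substantive difference is that you explicitly justify positivity of the separation constant in the $k=\infty$ case via translation invariance of the compartments, a point the paper's proof asserts by writing $\mu=\min\{d_{H}(I_{l},I_{j})\}$ over an infinite index set without comment.
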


\begin{proof}
	\begin{description}
		\item[Case $ k<\infty $]
		Put $ \mu = \min\{d_{H}(I_{l},I_{j})\,\,|\,\, l\neq j, $ and $ l,j=1,\dots,2k-3\} $ and $ \gamma_{1},\gamma_{2}\in \overline{\Omega}_{k} $. Suppose $ \rho(\gamma_{1},\gamma_{2})<\frac{\mu}{2^N} $. Then, for any $ -N\leqslant i\leqslant N $:
		\[ d_{i}(\gamma_{1},\gamma_{2})<\mu \] 
		because, on the contrary, we have $ \rho(\gamma_{1},\gamma_{2})=\sum\frac{d_{i}(\gamma_{1},\gamma_{2})}{2^{|i|}} \geqslant \frac{\mu}{2^{N}} $. 
		
		Now 
		\[d_{i}(\gamma_{1},\gamma_{2})<\mu \Rightarrow d_{i}(\gamma_{1},\gamma_{2})=0\] 
		and therefore, $ \gamma_{1}((i,i+1))=\gamma_{2}((i,i+1)) $ implying that $ s_{i}(\gamma_{1})=s_{i}(\gamma_{2}) $, for all $ -N\leqslant i \leqslant N $. So
		\begin{align*}
			d(s(\gamma_{1}),s(\gamma_{2}))&=\sum\limits_{i\in\Z}\frac{|s_{i}(\gamma_{1})-s_{i}(\gamma_{2})|}{2^{|i|}}\\ &=\sum\limits_{\substack{i\in\Z\\|i|>N}}\frac{|s_{i}(\gamma_{1})-s_{i}(\gamma_{2})|}{2^{|i|}}\leqslant\frac{2k-3}{2^{N-1}}.
		\end{align*}
		
		This proves that $ \overline{s} $ is continuous.
		
		The same argument reverses itself in order to show $ \overline{s} $ is open: let $ \gamma_{1},\gamma_{2}\in\overline{\Omega}_{k} $, and $ N\in\N $
		\[ d(\overline{s}(\gamma_{1}),\overline{s}(\gamma_{2}))<\frac{1}{2^{N}} \Rightarrow s_{i}(\gamma_{1})=s_{i}(\gamma_{2}), \forall -N\leqslant i \leqslant N \]
		
		Then $ \gamma_{1}([i,i+1])=\gamma_{2}([i,i+1]) $, for all $ -N\leqslant i\leqslant N  $. Hence
		\[ \rho(\gamma_{1},\gamma_{2})= \sum_{\substack{i\in\Z\\|i|>N}}\frac{d_{i}(\gamma_{1},\gamma_{2})}{2^{|i|}} \leqslant\frac{M}{2^{N-1}}. \]  
		where $ M>0 $, such that diam$(\Lambda_{k})<M $.

		\item[Case $ k=\infty $] Again, let $ \mu = \min\{d_{H}(I_{l},I_{j})\,\,|\,\, l\neq j, $ and $ l,j\in\Z\} $ and $ \gamma_{1},\gamma_{2}\in \overline{\Omega}_{\infty} $, such that $ \rho(\gamma_{1},\gamma_{2})<\frac{\mu}{2^N} $. Then, for any $ -N\leqslant i\leqslant N $ we get:
		\[ d_{i}(\gamma_{1},\gamma_{2})<\mu \Rightarrow d_{i}(\gamma_{1},\gamma_{2})=0, \] 
		which implies $ s_{i}(\gamma_{1})=s_{i}(\gamma_{2}) $, for all $ -N\leqslant i \leqslant N $. So
		\begin{align*}
			d(s(\gamma_{1}),s(\gamma_{2}))
			&=\sum\limits_{\substack{i\in\Z\\|i|>N}}\frac{|s_{i}(\gamma_{1})-s_{i}(\gamma_{2})|}{2^{|i|}}\\
			&\leqslant \sum\limits_{\substack{i\in\Z\\|i|>N}}\frac{|s_{0}(\gamma_{1})-s_{0}(\gamma_{2})|+4|i|}{2^{|i|}}=\\
			&\leqslant 2\sum\limits_{i=N+1}^{\infty}\frac{4i}{2^{i}}=\frac{1}{2^{N-4}}.
		\end{align*}
		
		If $ \gamma_{1},\gamma_{2}\in\overline{\Omega}_{\infty} $ and $ N\in\N $ are such that $ d(\overline{s}(\gamma_{1}),\overline{s}(\gamma_{2}))<\frac{1}{2^{N}} $, then for all $ -N\leqslant i \leqslant N $:
		\[ s_{i}(\gamma_{1})=s_{i}(\gamma_{2}) \Rightarrow \gamma_{1}([i,i+1])=\gamma_{2}([i,i+1]) \]
	
		Hence
		\begin{align*}
			\rho(\gamma_{1},\gamma_{2})&= \sum_{\substack{i\in\Z\\|i|>N}}\frac{d_{i}(\gamma_{1},\gamma_{2})}{2^{|i|}} \leqslant\sum_{\substack{i\in\Z\\|i|>N}}\frac{d_{0}(\gamma_{1},\gamma_{2}) +4\sqrt{5}|i|}{2^{|i|}}\\
			&\leqslant \frac{8\sqrt{5}}{2^{N}}\sum\limits_{i=1}^{\infty}\frac{i}{2^{i}}=\frac{\sqrt{5}}{2^{N-4}}.
		\end{align*}
	\end{description}

	Therefore $ \overline{s} $ in a homeomorphism over its image.
\end{proof}


Now we are in conditions to prove items (i) to (iii) of Theorem \ref{teo 1}.

\subsection{Proof of item \emph{(i)} of Theorem \ref{teo 1}}
\begin{proposition}
	The function $ \overline{s}:\overline{\Omega}_{2}\to\{0,1\}^{\Z} $ is a conjugation between $ \overline{T_{1}} $ and $ \sigma $, i.e., $ \overline{s} \circ \overline{T_1}=\sigma\circ \overline{s} $  (see Figure \ref{diagram1}).
\begin{figure}[H]
	\begin{center}
		\begin{tikzpicture}
		\node (A) {$\overline{\Omega}_{2}$};
		\node (B) [right of=A] {$\overline{\Omega}_{2}$};
		\node (C) [below of=A] {$\{0,1\}^{\Z}$};
		\node (D) [below of=B] {$\{0,1\}^{\Z}$};
		\large\draw[->] (A) to node {\mbox{{\footnotesize $\overline{T_1}$}}} (B);
		\large\draw[->] (C) to node {\mbox{{\footnotesize $\sigma$}}} (D);
		\large\draw[->] (A) to node {\mbox{{\footnotesize $\overline{s}$}}} (C);
		\large\draw[->] (B) to node {\mbox{{\footnotesize $\overline{s}$}}} (D);
		\end{tikzpicture}
	\end{center}
	\caption{}
	\label{diagram1}
\end{figure}
\end{proposition}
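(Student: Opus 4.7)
My plan is to verify the assertion in two stages: first the identity $\overline{s}\circ\overline{T_{1}}=\sigma\circ\overline{s}$ itself, and then (since a genuine conjugacy requires $\overline{s}$ to be a bijection) the surjectivity of $\overline{s}$ onto $\{0,1\}^{\mathbb{Z}}$, which together with Proposition~\ref{s-homeo} upgrades $\overline{s}$ to a homeomorphism.

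The intertwining identity is a direct bookkeeping check from the definitions. Fixing $\gamma\in\overline{\Omega}_{2}$ and $j\in\mathbb{Z}$, the $j$-th coordinate of $\sigma(\overline{s}(\gamma))$ is $\overline{s}(\gamma)_{j+1}$, namely the label of the arc that holds $\gamma(j+1)$ (or, when $\gamma(j+1)=p_{1}$, the label of the arc containing $\gamma(j+3/2)$). The $j$-th coordinate of $\overline{s}(\overline{T_{1}}(\gamma))$ is produced by the same recipe applied to $T_{1}(\gamma)(j)=\gamma(j+1)$ (and $T_{1}(\gamma)(j+1/2)=\gamma(j+3/2)$ in the tangential case). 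These are literally the same prescription, so the two sequences coincide coordinate-wise. No dynamical input beyond the definition of $T_{1}$ is needed.

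The substantive step is surjectivity. Given an arbitrary $(a_{j})_{j\in\mathbb{Z}}\in\{0,1\}^{\mathbb{Z}}$, I would realize it by building a global trajectory $\gamma$ that stitches loops at the unique two-fold $p_{1}=(0,0)$. Proposition~\ref{flight-time} with $k=2$ says every global trajectory crosses $p_{1}$ exactly once per unit time interval, so each complete loop from $p_{1}$ back to $p_{1}$ is traced in precisely one unit of time; this is also easy to confirm from the horizontal component $\dot{x}=\pm 1$ (half a unit out to the crossing point $r_{0}$ or $r_{1}$, half a unit back). Setting $\gamma(0)=p_{1}$ and, on each interval $[j,j+1]$, choosing the local orbit that traces $I_{a_{j}}$---an admissible choice by Definition~\ref{definicao trajetorias}(iv), since $p_{1}$ is a regular visible--visible two-fold with $\Sigma^{s}\cup\Sigma^{e}=\emptyset$, so the only alternatives at $p_{1}$ are continuing with $X_{2}$ (which traces the right loop $I_{1}$) or with $Y_{2}$ (which traces the left loop $I_{0}$)---produces an orientation-preserving concatenation with switching times $t_{j}=j\to\pm\infty$, hence a bona fide global trajectory in the sense of Definition~\ref{definicao trajetoria global}. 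By construction $\gamma(j)=p_{1}$ and $\gamma(j+1/2)\in I_{a_{j}}$, so $\overline{s}(\gamma)=(a_{j})$.

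With surjectivity in hand and Proposition~\ref{s-homeo} already supplying the homeomorphism-onto-image property, $\overline{s}\colon\overline{\Omega}_{2}\to\{0,1\}^{\mathbb{Z}}$ becomes a homeomorphism, and the intertwining identity then yields the topological conjugacy. The only delicate point I anticipate is pinning down the local picture at $p_{1}$: one must verify that the two admissible local-orbit choices at the regular tangency are in bijective correspondence with the loops $I_{0}$ and $I_{1}$, and that a bi-infinite sequence of such choices assembles into a trajectory defined on all of $\mathbb{R}$. Both follow from the figure-eight geometry of $\Lambda_{2}$ and from Proposition~\ref{flight-time}, but deserve explicit mention in the written proof.
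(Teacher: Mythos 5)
Your proposal is correct and follows essentially the same route as the paper: the paper likewise proves surjectivity by concatenating the arcs $I_{0},I_{1}$ at $p_{1}$ according to the prescribed sequence (using that each loop takes unit time) and then verifies $b_{j}=a_{j+1}$ coordinate-wise for the intertwining identity, relying on Proposition \ref{s-homeo} for the homeomorphism property. Your version is if anything slightly more explicit about why the concatenation is an admissible global trajectory and about the half-unit/half-unit timing of each loop, which the paper leaves to Proposition \ref{flight-time}.
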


\begin{proof}
	Let us show that $ \overline{s}(\overline{\Omega}_{2})=\{0,1\}^{\Z} $. Given $ (s_{j})\in\{0,1\}^{\Z} $, construct $ \gamma $ by concatenating the arcs $ I_{0} $ and $ I_{1} $ according to $ (s_{j}) $, so $ \gamma $ is a trajectory with $ \gamma(0)=p_{1} $, and $ \gamma((0,1))=I_{s_{0}} $. Then $ \gamma(1)=p_{1} $ and $ \gamma((1,2))= I_{s_{1}} $. In general, for all $ j\in\Z $, $ \gamma(j)=p_{1} $ and $ \gamma((j,j+1))=I_{s_{j}} $. By Definition \ref{definicao trajetoria global}, $ \gamma $ is a global trajectory of $ Z_{2} $ and therefore $ \gamma\in\Omega_{2} $. Moreover, $ s(\gamma)=\overline{s}(\gamma)=(s_{j})_{j\in\Z} $.
	
	Now, for the commutative part, let $ \gamma\in\overline{\Omega}_{2} $, $ (a_j)_{j\in\Z}=\overline{s}(\gamma) $, and $ (b_j)_{j\in\Z}=\overline{s}(\overline{T_1}(\gamma)) $, then:
	\begin{equation*}
	b_j=\left\{
	\begin{array}{l} 
	0,\quad $if$ \quad \overline{T_1}(\gamma)(j) \in I_0 \\ 
	1,\quad $if$ \quad \overline{T_1}(\gamma)(j)  \in I_1
	\end{array}\right.=
	\left\{
	\begin{array}{l} 
	0,\quad $if$ \quad \gamma(j+1) \in I_0 \\ 
	1,\quad $if$ \quad \gamma(j+1)  \in I_1
	\end{array}\right.=a_{j+1}
	\Rightarrow
	\end{equation*}
	$ \Rightarrow (b_j)_{j\in\Z}=\sigma((a_j)_{j\in\Z}), $
	i.e., $ \overline{s} \circ \overline{T_1}=\sigma\circ \overline{s} $.	 
\end{proof}

\subsection{Proof of item \emph{(ii)} of Theorem \ref{teo 1}}

%
%
%
%

\begin{proposition}
	The function $ \overline{s}:\overline{\Omega}_{k}\to \overline{s}(\overline{\Omega}_{k}) $ is a conjugation between $ \overline{T_{1}} $ and $ \sigma $, i.e., $ \overline{s} \circ \overline{T_1}=\sigma\circ \overline{s} $  (see Figure \ref{diagram2}).
\begin{figure}[H]
	\begin{center}
		\begin{tikzpicture}
		\node (A) {$\overline{\Omega}_{k}$};
		\node (B) [right of=A] {$\overline{\Omega}_{k}$};
		\node (C) [below of=A] {$\overline{s}(\overline{\Omega}_{k})$};
		\node (D) [below of=B] {$\overline{s}(\overline{\Omega}_{k})$};
		\large\draw[->] (A) to node {\mbox{{\footnotesize $\overline{T_1}$}}} (B);
		\large\draw[->] (C) to node {\mbox{{\footnotesize $\sigma$}}} (D);
		\large\draw[->] (A) to node {\mbox{{\footnotesize $\overline{s}$}}} (C);
		\large\draw[->] (B) to node {\mbox{{\footnotesize $\overline{s}$}}} (D);
		\end{tikzpicture}
	\end{center}
	\caption{}
	\label{diagram2}
\end{figure}
\end{proposition}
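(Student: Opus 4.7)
The plan is to mirror the proof already given for item (i), now working in the more general $k$-symbol setting. The structural machinery needed is already in place: Proposition \ref{s-homeo} gives that $\overline{s}$ is a homeomorphism onto its image $\overline{s}(\overline{\Omega}_{k})$, and $\overline{T_1}$ and $\sigma$ are both homeomorphisms on the respective spaces. So the only outstanding point is the commutation of the diagram in Figure \ref{diagram2}.

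First I would fix $\gamma \in \overline{\Omega}_{k}$ (working with the canonical representative $\gamma^*$ produced in the Remark, so that $\gamma(j)\in\{(p_\ell,0)\}$ for every $j\in\Z$) and set $(a_j)_{j\in\Z}=\overline{s}(\gamma)$ and $(b_j)_{j\in\Z}=\overline{s}(\overline{T_1}(\gamma))$. By the definition of $\overline{T_1}$ we have $\overline{T_1}(\gamma)(t)=\gamma(t+1)$, and in particular $\overline{T_1}(\gamma)(j)=\gamma(j+1)$ and $\overline{T_1}(\gamma)(j+1/2)=\gamma(j+3/2)$. Plugging these into the definition of the itinerary map $s$:
\begin{equation*}
b_j=\left\{
\begin{array}{l}
n,\quad \text{if } \overline{T_1}(\gamma)(j)\in I_n,\\
m,\quad \text{if } \overline{T_1}(\gamma)(j)\in\{(p_\ell,0)\}\ \text{and}\ \overline{T_1}(\gamma)(j+1/2)\in I_m,
\end{array}\right.
\end{equation*}
which rewrites as
\begin{equation*}
b_j=\left\{
\begin{array}{l}
n,\quad \text{if } \gamma(j+1)\in I_n,\\
m,\quad \text{if } \gamma(j+1)\in\{(p_\ell,0)\}\ \text{and}\ \gamma(j+3/2)\in I_m,
\end{array}\right.
=a_{j+1}.
\end{equation*}
Hence $(b_j)_{j\in\Z}=\sigma((a_j)_{j\in\Z})$, i.e., $\overline{s}\circ\overline{T_1}=\sigma\circ\overline{s}$, which is exactly the commutation claimed.

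Combining this with Proposition \ref{s-homeo}, $\overline{s}$ restricted to its image is a homeomorphism and conjugates $\overline{T_1}$ to $\sigma$, yielding the stated conjugacy to a subshift. I do not expect any genuine obstacle here; the content of item (ii), beyond item (i), is in identifying the image $\overline{s}(\overline{\Omega}_k)$ with a proper subshift of the full shift on $2(k-1)$ symbols. That identification, however, is not part of the statement under discussion and follows from the adjacency rule for the arcs $I_n$ spelled out just before the definition of $s$: transitions $s_j\to s_{j+1}$ are restricted to neighbouring compartments, so the image is a closed $\sigma$-invariant subset of $\{0,1,\dots,2k-3\}^{\Z}$. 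The only subtle point in the commutation argument itself is making sure the tangency clause in the definition of $s$ is handled correctly, which is why I work with the canonical representative $\gamma^*$ for which $\gamma(j)$ lands on a two-fold at every integer time, so that only the second branch of the defining formula is ever actually used and the identity $b_j=a_{j+1}$ is immediate.
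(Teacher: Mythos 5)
Your argument is correct and follows essentially the same route as the paper: the commutation $b_j=a_{j+1}$ is verified pointwise from the definitions of $\overline{T_1}$ and $s$, and the homeomorphism part is delegated to Proposition \ref{s-homeo}. The only difference is one of emphasis — you treat the tangency branch of the itinerary map more carefully via the canonical representative, while the paper instead spells out why $\overline{s}(\overline{\Omega}_k)$ is a subshift (closed by continuity plus compactness of $\overline{\Omega}_k$, invariant because $\sigma=\overline{s}\circ\overline{T_1}\circ\overline{s}^{-1}$), a point you sketch only briefly but which is needed for $\sigma$ to be a well-defined map on the image.
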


\begin{proof}
	Let $ \gamma\in\overline{\Omega}_{k} $, $ (a_j)_{j\in\Z}=\overline{s}(\gamma) $, and $ (b_j)_{j\in\Z}=\overline{s}(\overline{T_1}(\gamma)) $. Then:
	\begin{align*}
	b_j & = m,\quad \text{if} \quad \overline{T_1}(\gamma)(j) \in I_m \\
	& = m,\quad \text{if} \quad \gamma(j+1) \in I_m\\ 
	& = a_{j+1}
	\Rightarrow
	\end{align*}
	$ \Rightarrow (b_j)_{j\in\Z}=\sigma((a_j)_{j\in\Z}) $.
	
	It remains to show that $ \overline{s}(\overline{\Omega}_{k}) $ is a subshift. First, $ \overline{s} $ is continuous and $ \overline{\Omega}_{k} $ is compact. Then $ \overline{s}(\overline{\Omega}_{k}) $ is closed in $ \mathcal{A}_{k}^{\Z} $. And, By Proposition \ref{s-homeo}, there exists $ \overline{s}^{-1}:\overline{s}(\overline{\Omega}_{k})\to\overline{\Omega}_{k} $ and it is continuous. By the first part of this proof, we have $ \sigma= \overline{s}\circ\overline{T_{1}}\circ\overline{s}^{-1} $, which proves the invariant part. 
	
	Hence $ \overline{s} $ is a conjugation between both systems.
\end{proof}

\subsection{Proof of item \textit{(iii)} of Theorem \ref{teo 1}}

\begin{proposition}
	The function $ \overline{s}:\overline{\Omega}_{\infty}\to \overline{s}(\overline{\Omega}_{\infty})\subset \Theta_{\infty} $ is a conjugation between $ \overline{T_{1}} $ and $ \sigma $, i.e., $ \overline{s} \circ \overline{T_1}=\sigma\circ \overline{s} $  (see Figure \ref{diagram3}).
	\begin{figure}[H]
		\begin{center}
			\begin{tikzpicture}
			\node (A) {$\overline{\Omega}_{\infty}$};
			\node (B) [right of=A] {$\overline{\Omega}_{\infty}$};
			\node (C) [below of=A] {$\Theta_{\infty}$};
			\node (D) [below of=B] {$\Theta_{\infty}$};
			\large\draw[->] (A) to node {\mbox{{\footnotesize $\overline{T_1}$}}} (B);
			\large\draw[->] (C) to node {\mbox{{\footnotesize $\sigma$}}} (D);
			\large\draw[->] (A) to node {\mbox{{\footnotesize $\overline{s}$}}} (C);
			\large\draw[->] (B) to node {\mbox{{\footnotesize $\overline{s}$}}} (D);
			\end{tikzpicture}
		\end{center}
		\caption{}
		\label{diagram3}
	\end{figure}
\end{proposition}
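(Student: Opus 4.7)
The plan is to follow exactly the template of items (i) and (ii): establish the intertwining relation $\overline{s}\circ\overline{T_1}=\sigma\circ\overline{s}$ by a direct computation on itineraries, then argue that $\overline{s}(\overline{\Omega}_\infty)$ is a subshift of $\Theta_\infty$, and finally invoke Proposition \ref{s-homeo} (whose statement already covers $k=\infty$) to conclude that $\overline{s}$ is a homeomorphism onto its image. The intertwining identity is purely formal: given $\gamma\in\overline{\Omega}_\infty$ with $\overline{s}(\gamma)=(a_j)_{j\in\Z}$, the $j$-th entry of $\overline{s}(\overline{T_1}(\gamma))$ is determined by which arc $I_m$ contains $\overline{T_1}(\gamma)(j)=\gamma(j+1)$, which is exactly $a_{j+1}$. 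This reproduces verbatim the computation used in item (ii) and does not depend on the alphabet being finite.

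The step that requires actual work is showing that $\overline{s}(\overline{\Omega}_\infty)$ is closed in $\Theta_\infty$, since the argument of Proposition 9 used compactness of $\overline{\Omega}_{k}$ for $k<\infty$, and neither $\Lambda_\infty$ nor $\overline{\Omega}_\infty$ is compact. To circumvent this I would argue directly: suppose $(a^{(n)})_{n\in\N}$ is a sequence in $\overline{s}(\overline{\Omega}_\infty)$ converging to some $a=(a_j)_{j\in\Z}\in\Theta_\infty$. Since the entries are integers and the metric $d$ on $\Theta_\infty$ dominates $|a_j^{(n)}-a_j|/2^{|j|}$ for each fixed $j$, convergence forces $a_j^{(n)}=a_j$ eventually in $n$ for each $j$. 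Therefore $a$ inherits from the $a^{(n)}$ the admissibility rules that encode global trajectories of $Z_\infty$, namely that from an even index $2l$ the next entry is $2l+1$ or $2l+2$ and from an odd index $2l+1$ the next entry is $2l$ or $2l-1$. One then builds $\gamma\in\Omega_\infty$ realizing $a$ by concatenating the prescribed arcs $I_{a_j}$ between consecutive two-folds, using Definition \ref{definicao trajetoria global}; by construction $\overline{s}(\gamma)=a$, so $a\in\overline{s}(\overline{\Omega}_\infty)$. Invariance of the image under $\sigma$ is then immediate from the intertwining identity.

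With the image identified as a subshift and Proposition \ref{s-homeo} providing that $\overline{s}$ is a homeomorphism onto its image, the conjugacy $\sigma=\overline{s}\circ\overline{T_1}\circ\overline{s}^{-1}$ follows. The main obstacle, as anticipated, is the lack of compactness of $\overline{\Omega}_\infty$: one must replace the compactness-plus-continuity argument of the finite case by the direct realization of any admissible limit sequence as an itinerary, which is why the constraint $|a_{j+1}-a_j|\leq 2$ built into Definition \ref{def-theta-infinito} (together with the parity transition rules encoded in the $I_j$) is exactly what is needed to close up the construction.
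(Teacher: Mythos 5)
Your proposal is correct, and its core follows the paper's own route: the intertwining identity is verified entrywise exactly as in item (ii), and Proposition \ref{s-homeo} (which is indeed stated and proved for $k=\infty$ as well) supplies the homeomorphism onto the image, whence $\sigma=\overline{s}\circ\overline{T_1}\circ\overline{s}^{-1}$ on $\overline{s}(\overline{\Omega}_\infty)$. Where you genuinely depart from the paper is the closedness of $\overline{s}(\overline{\Omega}_\infty)$ in $\Theta_\infty$: the paper's proof of item (iii) establishes only the invariance $\sigma(\overline{s}(\overline{\Omega}_\infty))=\overline{s}(\overline{\Omega}_\infty)$ and says nothing about closedness, precisely because the compactness argument used for $k<\infty$ is unavailable (neither $\Lambda_\infty$ nor $\overline{\Omega}_\infty$ is compact). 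Your replacement is sound: for fixed $j$ the metric on $\Theta_\infty$ gives $|a_j^{(n)}-a_j|\leqslant 2^{|j|}d(a^{(n)},a)$, so integrality forces eventual stabilization of each coordinate, the parity transition rules (from $I_{2l}$ to $I_{2l+1}$ or $I_{2l+2}$, from $I_{2l+1}$ to $I_{2l}$ or $I_{2l-1}$) are conditions on consecutive pairs and hence pass to the limit, and any admissible sequence is realized by concatenating arcs between consecutive two-folds as in Definition \ref{definicao trajetoria global}. What your extra step buys is that the image is actually a subshift in the sense of the paper's own definition (closed and invariant), which is what Theorem \ref{teo 1}(iii) asserts; the paper's written proof leaves that half implicit.
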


\begin{proof}
%

Let $ \gamma\in\overline{\Omega}_{\infty} $, $ (a_j)_{j\in\Z}=\overline{s}(\gamma) $, and $ (b_j)_{j\in\Z}=\overline{s}(\overline{T_1}(\gamma)) $. Then:
\begin{align*}
b_j & = m,\quad \text{if} \quad \overline{T_1}(\gamma)(j) \in I_m \\
& = m,\quad \text{if} \quad \gamma(j+1) \in I_m\\ 
& = a_{j+1}
\Rightarrow
\end{align*}
$ \Rightarrow (b_j)_{j\in\Z}=\sigma((a_j)_{j\in\Z}) $.

	By Proposition \ref{s-homeo}, there exists $ \overline{s}^{-1}:\overline{s}(\overline{\Omega}_{\infty})\to\overline{\Omega}_{\infty} $ and it is continuous and then $ \sigma= \overline{s}\circ\overline{T_{1}}\circ\overline{s}^{-1} $, which proves that $ \sigma(\overline{s}(\overline{\Omega}_{\infty}))=\overline{s}(\overline{\Omega}_{\infty}) $.
\end{proof}

\subsection{Proof of item \emph{(iv)} of Theorem \ref{teo 1}}

Consider the PSVF
\begin{equation}\label{Eq Campo feijao}
Z(x,y)=\left\{
\begin{array}{l} 
X(x,y)= (1, -2x) ,\quad $for$ \quad y \geq 0 \\ 
Y(x,y)= (-2,-4x^3+2x),\quad $for$ \quad y \leq 0.
\end{array}
\right.
\end{equation} 

This PSVF has a compact invariant set $\Lambda=\{(x,y)\in\R^2:-1\leqslant x\leqslant 1$ and $x^4/2+x^2/2\leqslant y\leqslant 1-x^2 \}$. Furthermore, $\Lambda$ is a chaotic non-trivial minimal set for $Z$ (see \cite{BCEminimal}). As before, we will consider $ \Omega=\{\gamma $ global trajectory of $ Z\vert_{\Lambda}\} $ the set of all possible trajectories contained in the set $ \Lambda $. 

In cases \emph{(i)-(iii)} of Theorem \ref{teo 1}, one could consider the set $ \{p_{j}\} $, as a "recurrent" set, in the sense that every trajectory visits it and returns to it an infinity amount of times. Moreover, the time between visits is exactly 1 (see Proposition \ref{flight-time}), and that was the basic setting that allowed the previous constructions. In this case we consider the set $ K=\{0\}\times(0,1] $, since every local trajectory intersects it transversally, and every point of $ K $ reaches $ p=(0,0) $ in finite time (positive and negative), thus every trajectory visits it an infinity amount of times. Thus, the set $ K $ will play the role of $ \{p_{j}\} $ in previous cases. This time, we cannot adjust the expression of $ Z $ in order to make the time between visits equal some constant. So we define the function: $ \eta:\Omega\to \R $, as $ \eta(\gamma)=\min\{t> 0\,\,|\,\, \gamma(t)\in K\} $ to be the least positive value of $ t $ for which $ \gamma(t)\in K $. For simplicity we may denote $ \eta_{\gamma}=\eta(\gamma) $. 


Now define the map $\mathcal{T}:\Omega\to\Omega $, as $ \mathcal{T}(\gamma)(.)=\gamma(.+\eta_{\gamma}) $. This map takes a trajectory to another one that starts one loop ahead. 

%


\begin{lemma}\label{tempos de batida}
	Given a global trajectory $\gamma \in \Omega$, there exists a bi-infinite increasing sequence $ (t_{j}^{\gamma})_{j\in\Z} $, such that $\gamma(t_{j}^{\gamma})\in K$.
\end{lemma}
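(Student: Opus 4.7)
My plan is to decompose the global trajectory $\gamma\in\Omega$ into its local pieces as in Definition \ref{definicao trajetoria global}, writing $\gamma$ as a concatenation $\bigcup_{i\in\Z}\sigma_i$ of $X$-arcs (inside $\Sigma^+$), $Y$-arcs (inside $\Sigma^-$), and possibly sliding or escaping segments on $\Sigma^s\cup\Sigma^e$. The central observation is that in $\Sigma^+\cap\Lambda$ the $X$-flow has $\dot x=1$ and in $\Sigma^-\cap\Lambda$ the $Y$-flow has $\dot x=-2$, while the sliding field $Z^T$ obtained from \eqref{expfilipov} has first coordinate bounded away from zero along $\Sigma^s\cup\Sigma^e$. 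Combined with the inclusion $\Lambda\subset\{|x|\le 1\}$, this yields a uniform upper bound on the duration of each local arc $\sigma_i$. Since $\gamma$ is parametrized on all of $\R$, the concatenation must then contain infinitely many local arcs with switching times accumulating at both $+\infty$ and $-\infty$.

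Next I would verify that every non-trivial $X$-arc of $\gamma$ crosses $K$ at exactly one time. An $X$-arc entering $\Sigma$ at $(x_{\text{in}},0)$ must satisfy $Xf(x_{\text{in}},0)=-2x_{\text{in}}\ge 0$, hence $x_{\text{in}}\le 0$; monotonicity of $x$ under the $X$-flow then forces the arc to exit $\Sigma^+$ at the mirror point $(-x_{\text{in}},0)$, tracing the parabola $y=x_{\text{in}}^2-x^2$. When $x_{\text{in}}<0$, this arc crosses the vertical axis at the unique point $(0,x_{\text{in}}^2)$, which lies in $K=\{0\}\times(0,1]$ because $0<x_{\text{in}}^2\le 1$ inside $\Lambda$. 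Moreover, between two consecutive $X$-arcs of $\gamma$ the trajectory traverses at most one $Y$-arc and one sliding or escaping segment, each of strictly positive yet bounded duration, so the elapsed time between successive hits of $K$ is bounded both above and away from zero.

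Combining the two facts, $\gamma^{-1}(K)$ contains exactly one time per non-degenerate $X$-arc of $\gamma$, is discrete in $\R$, and has infinitely many elements in each time direction, so enumerating it in increasing order produces the required bi-infinite sequence $(t_j^{\gamma})_{j\in\Z}$ with $t_j^{\gamma}\to\pm\infty$. The most delicate step will be to exclude pathological scenarios in which $\gamma$ contains only finitely many $X$-arcs, for instance becoming eventually trapped in $\Sigma^-$ or in perpetual sliding motion: by invariance of $\Lambda$ under $Z$ together with the bound $|x|\le 1$, every $Y$-arc must terminate at a point of $\Sigma$ in finite time, and every sliding or escaping segment must reach a tangency of $Z$ from which, by Definition \ref{definicao trajetorias}, the continuation of $\gamma$ is forced into $\Sigma^+$, producing the next $X$-arc in the sequence.
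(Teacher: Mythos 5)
Your argument is correct in substance, but it follows a genuinely different and more self-contained route than the paper's. The paper's proof is essentially formal bookkeeping: it sets $t_0^{\gamma}=\max\{t\leqslant 0 : \gamma(t)\in K\}$, defines $t_j^{\gamma}$ for $j>0$ as the partial sums $\sum_{k=0}^{j-1}\eta_{\mathcal{T}^{k}(\gamma)}$ of the iterated return times (symmetrically for $j<0$ via the backward return time $\widetilde{\eta}$), and verifies by telescoping that $\gamma(t_j^{\gamma})=\mathcal{T}^{j}(\gamma)(0)\in K$. The dynamical fact that $\eta$ is finite and positive on every trajectory --- which is the real content of the lemma --- is not proved there; it is only asserted in the paragraph preceding the statement (``every point of $K$ reaches $p=(0,0)$ in finite time, thus every trajectory visits it an infinity amount of times''). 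What you supply is precisely that missing recurrence argument: the uniform speeds of the $x$-coordinate under $X$, $Y$ and $Z^{T}$ (whose first component equals $-(2x^{2}+1)/(2(1-x^{2}))$, hence lies in $[-2,-1/2]$ on $\overline{\Sigma^{s}\cup\Sigma^{e}}$), together with $|x|\leqslant 1$ on $\Lambda$, bound the duration of every local arc, and each non-degenerate $X$-arc $y=x_{\mathrm{in}}^{2}-x^{2}$ meets $K$ exactly once, at $(0,x_{\mathrm{in}}^{2})$. Your approach buys an actual verification that $\eta$, $\mathcal{T}$ and the itinerary map are well defined, at the cost of a phase-portrait case analysis that the paper skips. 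Two details of that case analysis need repair, though neither is fatal: between consecutive $X$-arcs there may be \emph{two} $Y$-arcs rather than one (an $X$-arc landing in $\Sigma^{c-}=(1/\sqrt{2},1)\times\{0\}$ is followed by a $Y$-arc into $\Sigma^{s}$, a sliding segment to the fold $(0,0)$, and then possibly the visible $Y$-arc along the lower boundary of $\Lambda$ to $(-1,0)$ before the next $X$-arc begins); and at the fold $(0,0)$ the continuation is \emph{not} forced into $\Sigma^{+}$ --- by Definition \ref{definicao trajetorias} it may descend into $\Sigma^{-}$ through the visible $Y$-tangency or continue sliding into $\Sigma^{e}$ (only at the invisible $Y$-fold $(-1/\sqrt{2},0)$ is the exit into $\Sigma^{+}$ forced). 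Since every branch of this decision tree produces a new $X$-arc after a uniformly bounded number of local arcs of uniformly bounded duration, your conclusion --- that the hitting times of $K$ form a discrete, bi-infinite, increasing sequence tending to $\pm\infty$ --- stands once these two statements are weakened to ``a uniformly bounded number of intermediate arcs.''
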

\begin{proof}
	We define $ t_{0}= \max\{t\leqslant 0\,\,|\,\, \gamma(t)\in K\} $, and for $ j>0 $, let $ t_{j}^{\gamma} = \sum\limits_{k=0}^{j-1}\eta_{\mathcal{T}^{k}(\gamma)} $. For $ j<0 $, consider $ \widetilde{\eta}(\gamma)= \max\{t < 0\,\,|\,\, \gamma(t)\in K\} $, and the analogous for $ \mathcal{T} $, that is, $ \widetilde{\mathcal{T}}:\Omega\to\Omega $, $ \widetilde{\mathcal{T}}(\gamma)(.)=\gamma(.+\widetilde{\eta}_{\gamma}) $. Then and $ t_{j}=t_{0} + \sum\limits_{k=0}^{-j-1}\widetilde{\eta}_{\widetilde{\mathcal{T}}^{k}(\gamma)}$.
	
	Given the sequence constructed above, $ \gamma(t_{0}^{\gamma})\in K $, 
	\[ \gamma(t_{1}^{\gamma})=\gamma(\eta_{\gamma})=\mathcal{T}(\gamma)(0)\in K \] 
	When $ j>1 $, we get
	
	\begin{equation*}
	\gamma(t_{j}^{\gamma})
	\begin{aligned}[t]
	&=\gamma(\eta_{\gamma}+\eta_{\mathcal{T}(\gamma)}+\hdots+\eta_{\mathcal{T}^{j-1}(\gamma)}) = \mathcal{T}(\gamma)(\eta_{\mathcal{T}(\gamma)}+\hdots+\eta_{\mathcal{T}^{j-1}(\gamma)})= \\ &=\mathcal{T}^{2}(\gamma)(\eta_{\mathcal{T}^{2}(\gamma)}+\hdots+\eta_{\mathcal{T}^{j-1}(\gamma)})=\hdots= \mathcal{T}^{j-1}(\gamma)(\eta_{\mathcal{T}^{j-1}(\gamma)})=\\
	&=\mathcal{T}^{j}(\gamma)(0) \in K.
	\end{aligned}
	\end{equation*}
	
	For $ j<0 $ it is analogous.
\end{proof}

\begin{lemma}\label{tempos de batida do retorno}
	Given a global trajectory $\gamma \in \Omega$ and $ (t_{j}^{\gamma}) $  given above, consider the sequence $ (t_{j}^{\mathcal{T}(\gamma)}) $. It holds $ t_{j}^{\mathcal{T}(\gamma)}=t_{j+1}^{\gamma}-\eta_{\gamma} $.
\end{lemma}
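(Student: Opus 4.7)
The plan is to reduce everything to the shift identity $\mathcal{T}(\gamma)(s)=\gamma(s+\eta_{\gamma})$, and then verify the claim by matching indices on the two bi-infinite sequences of hitting times with $K$. The starting observation is that
\[ \{t\in\R\,:\,\mathcal{T}(\gamma)(t)\in K\}=\{t\in\R\,:\,\gamma(t+\eta_{\gamma})\in K\}=\{t_{j}^{\gamma}-\eta_{\gamma}\,:\,j\in\Z\}, \]
so the hitting times of $\mathcal{T}(\gamma)$ with $K$ are exactly the hitting times of $\gamma$ rigidly translated by $-\eta_{\gamma}$. After this, everything is a matter of showing that the translation produces the correct index shift $j\mapsto j+1$ in the convention of Lemma \ref{tempos de batida}.

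First I would pin down $t_{0}^{\mathcal{T}(\gamma)}$. By definition, it is the largest element of the above set that is $\leqslant 0$. Since $(t_{j}^{\gamma})$ is strictly increasing with $t_{0}^{\gamma}\leqslant 0<t_{1}^{\gamma}=\eta_{\gamma}$, the largest $t_{j}^{\gamma}\leqslant\eta_{\gamma}$ is $t_{1}^{\gamma}$ itself, hence $t_{0}^{\mathcal{T}(\gamma)}=t_{1}^{\gamma}-\eta_{\gamma}=0$, which is the claim for $j=0$. For $j\geqslant 1$, the construction in Lemma \ref{tempos de batida}, applied now to $\mathcal{T}(\gamma)$, gives
\[ t_{j}^{\mathcal{T}(\gamma)}=\sum_{k=0}^{j-1}\eta_{\mathcal{T}^{k}(\mathcal{T}(\gamma))}=\sum_{k=0}^{j-1}\eta_{\mathcal{T}^{k+1}(\gamma)}=\sum_{m=1}^{j}\eta_{\mathcal{T}^{m}(\gamma)}, \]
while $t_{j+1}^{\gamma}-\eta_{\gamma}=\sum_{k=0}^{j}\eta_{\mathcal{T}^{k}(\gamma)}-\eta_{\gamma}=\sum_{m=1}^{j}\eta_{\mathcal{T}^{m}(\gamma)}$, so the two sides coincide.

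For $j<0$ the argument is the same in spirit but requires a small telescoping computation, and this is the step I expect to be the most delicate, because here one works with $\widetilde{\eta}$ and $\widetilde{\mathcal{T}}$ rather than with $\eta$ and $\mathcal{T}$. The key intermediate identity is that $\widetilde{\eta}_{\mathcal{T}(\gamma)}=t_{0}^{\gamma}-\eta_{\gamma}$ (obtained by the same argument as above, taking now the largest $t_{j}^{\gamma}<\eta_{\gamma}$, namely $t_{0}^{\gamma}$), and, inductively,
\[ \widetilde{\mathcal{T}}^{k}(\mathcal{T}(\gamma))(s)=\gamma(s+t_{-k+1}^{\gamma}),\qquad \widetilde{\eta}_{\widetilde{\mathcal{T}}^{k}(\mathcal{T}(\gamma))}=t_{-k}^{\gamma}-t_{-k+1}^{\gamma} \quad (k\geqslant 1). \]
Plugging this into the definition $t_{j}^{\mathcal{T}(\gamma)}=t_{0}^{\mathcal{T}(\gamma)}+\sum_{k=0}^{-j-1}\widetilde{\eta}_{\widetilde{\mathcal{T}}^{k}(\mathcal{T}(\gamma))}$ and using $t_{0}^{\mathcal{T}(\gamma)}=0$, the sum telescopes to $t_{j+1}^{\gamma}-\eta_{\gamma}$, as desired.

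In short, the proof is bookkeeping driven by one identity, $\mathcal{T}(\gamma)(s)=\gamma(s+\eta_{\gamma})$; the only real work is carrying out the telescoping in the negative direction correctly after reconciling the definitions of $\widetilde{\eta}$ and $\widetilde{\mathcal{T}}$ for the shifted trajectory $\mathcal{T}(\gamma)$, for which one uses that $\mathcal{T}(\gamma)(0)\in K$ to locate the base point of its hitting-time sequence.
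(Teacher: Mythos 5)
Your proof is correct and follows essentially the same route as the paper: unwind the definitions of $t_{j}^{\mathcal{T}(\gamma)}$ from Lemma \ref{tempos de batida} and match the sums of $\eta$'s (resp.\ $\widetilde{\eta}$'s) term by term. In fact your treatment of the case $j<0$, via the identity $\widetilde{\eta}_{\widetilde{\mathcal{T}}^{k}(\mathcal{T}(\gamma))}=t_{-k}^{\gamma}-t_{-k+1}^{\gamma}$ and the telescoping sum, supplies details that the paper's one-line chain of equalities leaves implicit.
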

\begin{proof}
	In fact,

	\[ t_{0}^{\mathcal{T}(\gamma)}=0=\eta_{\gamma}-\eta_{\gamma}=t_{1}^{\gamma}-\eta_{\gamma} \]
	
	For $ j>0 $:
	\[ t_{j}^{\mathcal{T}(\gamma)}=\sum\limits_{k=0}^{j-1}\eta_{\mathcal{T}^{k+1}(\gamma)}= \sum\limits_{k=0}^{j}\eta_{\mathcal{T}^{k}(\gamma)}-\eta_{\gamma}=t_{j+1}^{\gamma}-\eta_{\gamma} \]
	
	And if $ j<0 $:
	\[ t_{j}^{\mathcal{T}(\gamma)}=t_{0} + \sum\limits_{k=0}^{-j-1}\widetilde{\eta}_{\widetilde{\mathcal{T}}^{k}(\mathcal{T}(\gamma))} = \sum\limits_{k=0}^{j}\eta_{\mathcal{T}^{k}(\gamma)}-\eta_{\gamma}=t_{j+1}^{\gamma}-\eta_{\gamma} \]
\end{proof}


\begin{definition} 
	Consider $ y:K\to (0,1] $ the projection on $y$-coordinate (that is $ y(0,\beta)=\beta $), and define the \emph{itinerary map} $ s:\Omega\to (0,1]^{\Z} $, as $ s(\gamma)=(s_{j}(\gamma))_{j\in\Z}$, where $ s_{j}(\gamma)=y(\gamma(t_{j}^{\gamma})) $. In this way, every single trajectory of $ \Omega $ can be encoded by the $y$-coordinates of its beats on $ K $.
\end{definition}

Clearly $s$ is well-defined, by construction and it is onto, because given a sequence $ (x_{j})_{j\in\Z}\in (0,1]^{\Z} $ it is possible to construct a trajectory $ \gamma\in\Omega $ by concatenating the correct arcs in order to get $ s(\gamma)=(x_{j})_{j\in\Z} $. 

As in previous cases, there are infinitely many trajectories that describes the same curve, simply by changing the initial condition (a shift in time), and the function $ s $ takes all of these to the same sequence. So, we consider the set $ \overline{\Omega}=\faktor{\Omega}{s} $, that is, two trajectories $ \gamma_{1},\gamma_{2} $ are equivalent if $ s(\gamma_{1})=s(\gamma_{2}) $.

\begin{remark}\label{representative gamma}
	Every trajectory $ \gamma\in\Omega $ is equivalent to another one $ \gamma^{*} $ such that $ \gamma^{*}(0)\in K $. 
\end{remark}

\begin{remark}
	Another way of avoiding the infinitely many trajectories that looks like the same would be to restrict our function to a subset $ \Omega_{*}\subset\Omega $, where $ \Omega_{*}=\{\gamma $ global trajectory of $ Z\vert_{\Lambda} \,\,|\,\, \gamma(0)\in K \} $, but we chose to work in a more general setting, without restricting the dynamics as it was done in cases \emph{(i)-(iii)} of Theorem \ref{teo 1}.
\end{remark}

\begin{definition}
	Let $\overline{\gamma_{1}},\overline{\gamma_{2}}\in \overline{\Omega} $, define $ \rho: \overline{\Omega}\times\overline{\Omega}\to\R $, by
	\[ \rho(\overline{\gamma_{1}},\overline{\gamma_{2}})=\sum_{i\in\Z}\frac{d_{i}(\gamma_{1}^{*},\gamma_{2}^{*})}{2^{|i|}} \]
	where $ d_{i}(\gamma_{1}^{*},\gamma_{2}^{*})= d_{H}(\gamma_{1}^{*}([t_{i}^{\gamma_{1}^{*}},t_{i+1}^{\gamma_{1}^{*}}]), \gamma_{2}^{*}([t_{i}^{\gamma_{2}^{*}},t_{i+1}^{\gamma_{2}^{*}}])) $. That is, at each step $ i $, we take the Hausdorff distance between the $i$-th loops of both trajectories.
\end{definition}

\begin{proposition}
	$ (\overline{\Omega},\rho) $ is a metric space.
\end{proposition}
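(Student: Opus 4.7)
The plan is to mirror Proposition \ref{prop-omega-metric}, verifying in turn that the series defining $\rho$ converges and then checking the four metric axioms (non-negativity, definiteness, symmetry, triangle inequality). The compactness of $\Lambda$ plays here the role that the compactness of $\Lambda_k$ played in the finite-$k$ case, so I expect the convergence step to be even simpler than in Proposition \ref{prop-omega-metric}.

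First, I would establish convergence. Since $\Lambda$ is compact, set $M = \operatorname{diam}(\Lambda)$. For any $\overline{\gamma_1},\overline{\gamma_2}\in\overline{\Omega}$ and any $i\in\Z$, the $i$-th loops $\gamma_j^*([t_i^{\gamma_j^*},t_{i+1}^{\gamma_j^*}])$ are closed subsets of $\Lambda$, hence $d_i(\gamma_1^*,\gamma_2^*)\leqslant M$. Comparison with the geometric series yields
\[
\rho(\overline{\gamma_1},\overline{\gamma_2})\leqslant M\left(1+2\sum_{i=1}^{\infty}\frac{1}{2^i}\right)=3M<\infty.
\]
Non-negativity is immediate. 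For the definite part, suppose $\rho(\overline{\gamma_1},\overline{\gamma_2})=0$; then $d_i(\gamma_1^*,\gamma_2^*)=0$ for every $i$, so the $i$-th loops of $\gamma_1^*$ and $\gamma_2^*$ coincide as subsets of $\Lambda$. Because both representatives satisfy $\gamma_j^*(0)\in K$ and each loop begins and ends at $K$, the successive beats of $\gamma_1^*$ and $\gamma_2^*$ on $K$ match, hence $s(\gamma_1^*)=s(\gamma_2^*)$ and $\overline{\gamma_1}=\overline{\gamma_2}$. Symmetry is inherited from the symmetry of $d_H$.

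The triangle inequality is handled termwise: for any third representative $\gamma_3^*$,
\[
d_i(\gamma_1^*,\gamma_2^*)\leqslant d_i(\gamma_1^*,\gamma_3^*)+d_i(\gamma_3^*,\gamma_2^*)
\]
because $d_H$ is a metric on the compact subsets of $\Lambda$. Dividing by $2^{|i|}$, summing over $i\in\Z$, and passing to the limit gives $\rho(\overline{\gamma_1},\overline{\gamma_2})\leqslant \rho(\overline{\gamma_1},\overline{\gamma_3})+\rho(\overline{\gamma_3},\overline{\gamma_2})$. The only genuinely delicate point is ensuring that $\rho$ is well-defined on the quotient $\overline{\Omega}$: a trajectory meets $K$ at infinitely many beats, so a class $\overline{\gamma}$ admits many representatives with $\gamma^*(0)\in K$, and different choices merely shift the indexing of loops. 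Since the weights $2^{-|i|}$ are not shift-invariant, I would pin down a canonical representative (for example, the one whose chosen base beat is $t_0^{\gamma}$ as defined in Lemma \ref{tempos de batida}, so that the $0$-th loop is unambiguous). With that convention fixed, the four axioms above go through and establish that $(\overline{\Omega},\rho)$ is a metric space.
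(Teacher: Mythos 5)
Your proof is correct and follows essentially the same route as the paper's, which simply notes that boundedness of $\Lambda$ makes each $d_i$ uniformly bounded (so the series converges) and then defers the metric axioms to the argument of Proposition \ref{prop-omega-metric}. Your extra care in fixing a canonical representative so that the indexing of loops, and hence each $d_i$, is unambiguous on the quotient addresses a point the paper leaves implicit in Remark \ref{representative gamma}, but it does not change the substance of the argument.
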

\begin{proof}
	The function $ \rho $ defined above is well-defined, since $ \Lambda $ is a bounded set, which implies there exists $ C>0 $, such that $ d_{i}(\gamma_{1}^{*},\gamma_{2}^{*})<C $, for all $ i\in\Z $.
	
	The proof of those properties for it to be a metric is analogous to the one given in Proposition \ref{prop-omega-metric}.
\end{proof}

Let $ \overline{\mathcal{T}}:\overline{\Omega}\to\overline{\Omega} $ the function induced by $ \mathcal{T} $, that is, $ \overline{\mathcal{T}}(\overline{\gamma})=\overline{\mathcal{T}(\gamma)} $. It does not depend on the representative, for if $ s(\gamma_{1})=s(\gamma_{2})=(s_{j})_{j\in\Z} $, then, for all $ j\in\Z, y(\gamma_{1}(t_{j}^{\gamma_{1}}))=y(\gamma_{2}(t_{j}^{\gamma_{2}}))=s_{j} $.

\[y(\mathcal{T}(\gamma_{1})(t_{j}^{\mathcal{T}(\gamma_{1})}))=y(\gamma_{1}(t_{j}^{\mathcal{T}(\gamma_{1})}+\eta_{\gamma_{1}}))=y(\gamma_{1}(t_{j+1}^{\gamma_{1}}-\eta_{\gamma_{1}}+\eta_{\gamma_{1}}))\]
\[ =y(\gamma_{1}(t_{j+1}^{\gamma_{1}}))=y(\gamma_{2}(t_{j+1}^{\gamma_{2}}))=y(\mathcal{T}(\gamma_{2})(t_{j}^{\mathcal{T}(\gamma_{2})})) \]
\[\Rightarrow s(\mathcal{T}(\gamma_{1}))=s(\mathcal{T}(\gamma_{2})). \]

\begin{proposition}
	The function $ \overline{\mathcal{T}}:\overline{\Omega}\to\overline{\Omega} $ is a homeomorphism.
\end{proposition}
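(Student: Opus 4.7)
The plan is to mirror the structure used to show that $\overline{T_1}$ is a homeomorphism in items (i)–(iii), but with the subtlety that the "time-one" step is replaced by a "one-loop" step governed by the return function $\eta$. The key identities will be Lemma \ref{tempos de batida do retorno} (which says that under $\mathcal{T}$ the hitting times on $K$ re-index by one) and the analogous backward version obtained from $\widetilde{\mathcal{T}}$.

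First, I would introduce the candidate inverse. Let $\overline{\widetilde{\mathcal{T}}}:\overline{\Omega}\to\overline{\Omega}$ be the map induced by $\widetilde{\mathcal{T}}$ from the proof of Lemma \ref{tempos de batida}, i.e.\ $\overline{\widetilde{\mathcal{T}}}(\overline{\gamma})=\overline{\widetilde{\mathcal{T}}(\gamma)}$. The same reasoning used for $\overline{\mathcal{T}}$ (together with Lemma \ref{tempos de batida do retorno}) shows it is well-defined on equivalence classes. To check that $\overline{\widetilde{\mathcal{T}}}=\overline{\mathcal{T}}^{-1}$, pick the representative $\gamma^{*}$ with $\gamma^{*}(0)\in K$ (Remark \ref{representative gamma}). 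Then $\widetilde{\eta}_{\mathcal{T}(\gamma^{*})}=-\eta_{\gamma^{*}}$ by the definitions of $\eta$ and $\widetilde{\eta}$, so
\[
\widetilde{\mathcal{T}}(\mathcal{T}(\gamma^{*}))(t)=\mathcal{T}(\gamma^{*})(t+\widetilde{\eta}_{\mathcal{T}(\gamma^{*})})=\gamma^{*}(t+\eta_{\gamma^{*}}-\eta_{\gamma^{*}})=\gamma^{*}(t),
\]
and symmetrically $\mathcal{T}\circ\widetilde{\mathcal{T}}(\gamma^{*})=\gamma^{*}$. Thus $\overline{\mathcal{T}}$ is a bijection with inverse $\overline{\widetilde{\mathcal{T}}}$.

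Next I would prove continuity by reducing to the same type of geometric-series estimate used for $\overline{T_1}$. Working with the canonical representative $\gamma^{*}$, Lemma \ref{tempos de batida do retorno} gives $t_{j}^{\mathcal{T}(\gamma^{*})}=t_{j+1}^{\gamma^{*}}-\eta_{\gamma^{*}}$, so
\[
\mathcal{T}(\gamma^{*})\bigl([t_{j}^{\mathcal{T}(\gamma^{*})},t_{j+1}^{\mathcal{T}(\gamma^{*})}]\bigr)=\gamma^{*}\bigl([t_{j+1}^{\gamma^{*}},t_{j+2}^{\gamma^{*}}]\bigr),
\]
and hence $d_{i}(\mathcal{T}(\gamma_{1})^{*},\mathcal{T}(\gamma_{2})^{*})=d_{i+1}(\gamma_{1}^{*},\gamma_{2}^{*})$ for every $i\in\Z$. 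The same re-indexing estimate used in the proof that $\overline{T_1}$ is continuous then yields
\[
\rho(\overline{\mathcal{T}}(\overline{\gamma_{1}}),\overline{\mathcal{T}}(\overline{\gamma_{2}}))=\sum_{i\in\Z}\frac{d_{i+1}(\gamma_{1}^{*},\gamma_{2}^{*})}{2^{|i|}}\leqslant 2\rho(\overline{\gamma_{1}},\overline{\gamma_{2}}),
\]
so $\overline{\mathcal{T}}$ is (Lipschitz) continuous. The same argument applied to $\widetilde{\mathcal{T}}$, using the analogue of Lemma \ref{tempos de batida do retorno} for $\widetilde{\mathcal{T}}$ (proved identically), gives continuity of $\overline{\mathcal{T}}^{-1}$.

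The one place I expect to spend some care is the bookkeeping about representatives: the metric $\rho$ is defined through the canonical representative $\gamma^{*}$ with $\gamma^{*}(0)\in K$, and it must be verified that $\mathcal{T}(\gamma^{*})$ is itself canonical, i.e.\ $\mathcal{T}(\gamma^{*})(0)\in K$. This is immediate from the definition of $\eta$, since $\mathcal{T}(\gamma^{*})(0)=\gamma^{*}(\eta_{\gamma^{*}})\in K$; and likewise for $\widetilde{\mathcal{T}}(\gamma^{*})$. Once this is in place, the re-indexing identity above transports the computation for $\overline{T_1}$ to the present setting with no essential change, and the homeomorphism property follows.
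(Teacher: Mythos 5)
Your proposal is correct and follows essentially the same route as the paper: exhibit the inverse as the backward one-loop shift, use Lemma \ref{tempos de batida do retorno} to get the re-indexing identity $d_{i}(\mathcal{T}(\gamma_{1}),\mathcal{T}(\gamma_{2}))=d_{i+1}(\gamma_{1},\gamma_{2})$, and conclude Lipschitz continuity (with constant $2$) of both $\overline{\mathcal{T}}$ and its inverse by the same geometric-series estimate. If anything, your treatment of the inverse via $\widetilde{\mathcal{T}}$ and the identity $\widetilde{\eta}_{\mathcal{T}(\gamma^{*})}=-\eta_{\gamma^{*}}$ is slightly more careful than the paper's formula $\mathcal{T}^{-1}(\gamma)(\cdot)=\gamma(\cdot-\eta_{\gamma})$, which tacitly uses the forward return time where the backward one is meant.
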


\begin{proof} 
	The function $ \mathcal{T} $ clearly is invertible, with inverse $ \left(\mathcal{T}\right)^{-1}(\gamma)(.)=\gamma(.-\eta_{\gamma}) $, and it is straightforward to see that $ \overline{\mathcal{T}}^{-1}=\overline{\mathcal{T}^{-1}} $.
	
	Now,  
	\begin{align*}
	d_i(\mathcal{T}(\gamma_1),\mathcal{T}(\gamma_2))&=
	d_{H}(\mathcal{T}(\gamma_{1})([t_{i}^{\mathcal{T}(\gamma_{1})},t_{i+1}^{\mathcal{T}(\gamma_{1})}]), \mathcal{T}(\gamma_{2})([t_{i}^{\mathcal{T}(\gamma_{2})},t_{i+1}^{\mathcal{T}(\gamma_{2})}])) \\
	&= d_{H}(\gamma_{1}([t_{i+1}^{\gamma_{1}},t_{i+2}^{\gamma_{1}}]), \gamma_{2}([t_{i+1}^{\gamma_{2}},t_{i+2}^{\gamma_{2}}]))=d_{i+1}(\gamma_1,\gamma_2).
	\end{align*}
	
	Then:
	\begin{align*}
	\rho(\overline{\mathcal{T}}(\gamma_1),\overline{\mathcal{T}}(\gamma_2))&= \sum_{i=-\infty}^{\infty}\frac{d_i(\mathcal{T}(\gamma_1),\mathcal{T}(\gamma_2))}{2^{|i|}}
	=\sum_{i=-\infty}^{\infty}\frac{d_{i+1}(\gamma_1,\gamma_2)}{2^{|i|}}=\\
	&= \lim\limits_{k\to\infty}\left(\sum_{i=0}^{k}\frac{d_{i+1}(\gamma_1,\gamma_2)}{2^{i}}+ \sum_{i=-k}^{-1}\frac{d_{i+1}(\gamma_1,\gamma_2)}{2^{-i}} \right)=\\
	&=\lim\limits_{k\to\infty}\left( 2\sum_{i=0}^{k}\frac{d_{i+1}(\gamma_1,\gamma_2)}{2^{i+1}}+ \frac{1}{2}\sum_{i=-k}^{-1}\frac{d_{i+1}(\gamma_1,\gamma_2)}{2^{-i-1}} \right)\leqslant\\
	&\leqslant 2\lim\limits_{k\to\infty}\left( \sum_{j=1}^{k+1}\frac{d_{j}(\gamma_1,\gamma_2)}{2^{j}}+ \sum_{j=-k+1}^{0}\frac{d_{j}(\gamma_1,\gamma_2)}{2^{-j}} \right)=\\
	&\leqslant 2 \sum_{j=-\infty}^{\infty}\frac{d_j(\gamma_1,\gamma_2)}{2^{|j|}}=2\rho(\gamma_1,\gamma_2).
	\end{align*}
	
	Hence $ \mathcal{T} $ is continuous. The proof of continuity of the inverse is analogous.
\end{proof}

On the space $ (0,1]^{\Z} $ we consider the same metric from those spaces before, that is, given $ (x_{j})_{j\in\Z},(y_{j})_{j\in\Z}\in (0,1]^{\Z} $ the distance between them is the real number $ d(x,y)=\displaystyle\sum\limits_{j\in\Z}\dfrac{|x_{j}-y_{j}|}{2^{|j|}} $.

\begin{proposition}
	$ \overline{s}: \overline{\Omega} \to (0,1]^{\Z} $ is a homeomorphism.
\end{proposition}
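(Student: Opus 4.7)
The plan is to verify the four standard conditions for a homeomorphism: well-definedness and injectivity of $\overline{s}$, surjectivity, and continuity in both directions, adapting the arguments used for Proposition \ref{s-homeo} to the continuous alphabet $(0,1]$.

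Well-definedness and injectivity are immediate from the quotient $\overline{\Omega} = \Omega/\sim$: equivalent trajectories share the same itinerary, and distinct equivalence classes produce distinct itineraries. For surjectivity, given any $(x_j)_{j\in\Z} \in (0,1]^{\Z}$, I would construct a realizing trajectory by concatenating loops. Each loop starts at $(0, x_j) \in K$, is steered by $X$ into $\Sigma^+$, and exploits the multi-valued Filippov convention at the two-fold $(0,0)$ (Definition \ref{definicao trajetorias}, items (iv)--(v)) to pick the branch that lands back on $K$ at $(0, x_{j+1})$. Lemma \ref{tempos de batida} then guarantees that the concatenation defines a bi-infinite global trajectory $\gamma \in \Omega$ with $s(\gamma) = (x_j)_{j \in \Z}$.

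For continuity of $\overline{s}$ at $\gamma_1$, I would fix $\epsilon > 0$ and choose $N$ so that the tail $\sum_{|j| > N} 2^{1 - |j|}$ is less than $\epsilon/2$. If $\rho(\gamma_1, \gamma_2) < \delta$, then $d_i(\gamma_1, \gamma_2) < 2^N \delta$ for $|i| \leq N$. The entering beat is the unique intersection of the $i$-th loop with $K$ at which the local velocity coincides with $X(0, s_i) = (1, 0)$, so closeness of the loops in Hausdorff distance transfers directly to closeness of the $y$-coordinates of the entering beats; choosing $\delta$ sufficiently small produces $d(s(\gamma_1), s(\gamma_2)) < \epsilon$, using the uniform bound $|s_i| \leq 1$ on the tail. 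Conversely, for continuity of $\overline{s}^{-1}$, close itineraries force close beat points, and continuous dependence of the smooth vector fields $X$ and $Y$ on initial data propagates this to closeness of the loops themselves in Hausdorff metric, since each individual loop is determined by its entry $(0, s_i)$ and exit $(0, s_{i+1})$ on $K$ and hence varies continuously with $(s_i, s_{i+1})$. Splitting the $\rho$-series at $|i| = N$ and bounding the tail by $\mathrm{diam}(\Lambda) < \infty$ completes the estimate.

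The main obstacle I anticipate is controlling the flow across the two-fold $(0, 0)$. Standard continuous dependence on initial conditions applies to the smooth fields $X$ and $Y$ away from singularities, but every loop passes through the two-fold where the Filippov trajectory is multi-valued. The resolution should be that once the branch is pinned down by the exit beat $s_{i+1}$, the loop varies continuously in the pair $(s_i, s_{i+1})$; verifying this near the singular point requires a careful local analysis of the dynamics at the two-fold, patterned after the treatment already used in \cite{BCEminimal} to identify $\Lambda$ as a minimal set.
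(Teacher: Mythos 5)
Your proposal follows essentially the same route as the paper's proof: surjectivity of $s$ by concatenating loops, injectivity from the quotient construction, continuity of $\overline{s}$ by comparing $|s_i(\gamma_1)-s_i(\gamma_2)|$ with the Hausdorff distance $d_i$ of the corresponding loops and summing the weighted series, and continuity of the inverse by the reverse comparison plus a uniform bound from $\mathrm{diam}(\Lambda)$. If anything, you are slightly more careful than the paper, which simply asserts the termwise inequality $|s_j(\gamma_1)-s_j(\gamma_2)|\leqslant d_j(\gamma_1,\gamma_2)$ and its converse without the geometric justification near the two-fold that you correctly single out as the delicate point.
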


\begin{proof}
	The function $ s $ is onto, which implies that $ \overline{s} $ is onto and clearly it is injective because it is defined on the quotient $ \faktor{\Omega}{s} $. So it remains to show the continuity part. 
	
	Note that $ |s_{j}(\gamma_{1})-s_{j}(\gamma_{2})|=||\gamma_{1}(t_{j}^{\gamma_{1}})-\gamma_{2}(t_{j}^{\gamma_{2}})||\leqslant d_{j}(\gamma_{1},\gamma_{2}) $ for all $ j\in\Z $. Then:
	
	\[ d(s(\gamma_{1}),s(\gamma_{2}))=\sum\limits_{j\in\Z}\dfrac{|s_{j}(\gamma_{1})-s_{j}(\gamma_{2})|}{2^{|j|}} \leqslant \sum_{j\in\Z}\frac{d_j(\gamma_1,\gamma_2)}{2^{|j|}}=\rho(\gamma_{1},\gamma_{2}). \]
	
	Thus $ \overline{s} $ is continuous.

		The same argument reverses itself in order to show $ \overline{s} $ is open: let $ \gamma_{1},\gamma_{2}\in\overline{\Omega}$, and $ \varepsilon>0 $ such that 
	\[ d(\overline{s}(\gamma_{1}),\overline{s}(\gamma_{2}))< \varepsilon.\] 
	
	Then $d_i (\gamma_1, \gamma_2) < M \varepsilon$, where $ M>0 $ is such that diam$(\Lambda)<M $.
	
 Hence
	\[ \rho(\gamma_{1},\gamma_{2})= \sum_{\substack{i\in\Z}}\frac{d_{i}(\gamma_{1},\gamma_{2})}{2^{|i|}} \leqslant 3 M \varepsilon. \]  
	
\end{proof}

\begin{proposition}
	Let $ \sigma: (0,1]^{\Z} \to (0,1]^{\Z} $ be the shift map. Then $ s \circ \pi=\sigma\circ s $,  i.e., the following diagram commutes (Figure \ref{diagram4}):
	
	\begin{figure}[H]
		\begin{center}
			\begin{tikzpicture}
				\node (A) {$\Omega$};
				\node (B) [right of=A] {$\Omega$};
				\node (C) [below of=A] {$(0,1]^{\Z}$};
				\node (D) [below of=B] {$(0,1]^{\Z}$};
				\large\draw[->] (A) to node {\mbox{{\footnotesize $\mathcal{T}$}}} (B);
				\large\draw[->] (C) to node {\mbox{{\footnotesize $\sigma$}}} (D);
				\large\draw[->] (A) to node {\mbox{{\footnotesize $s$}}} (C);
				\large\draw[->] (B) to node {\mbox{{\footnotesize $s$}}} (D);
			\end{tikzpicture}
		\end{center}
		\caption{a}
		\label{diagram4}
	\end{figure}

\end{proposition}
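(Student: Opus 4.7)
The plan is to verify the commutativity coordinate-by-coordinate, using Lemma \ref{tempos de batida do retorno} to translate the hitting times of $\mathcal{T}(\gamma)$ back to those of $\gamma$. Everything reduces to showing that the $j$-th beat of $\mathcal{T}(\gamma)$ on $K$ is exactly the $(j+1)$-th beat of $\gamma$ on $K$, which is the content of the preceding two lemmas.

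Concretely, fix $\gamma \in \Omega$ (taking a representative as in Remark \ref{representative gamma}, with $\gamma(0)\in K$). Write $(a_j)_{j\in\Z} = s(\gamma)$ and $(b_j)_{j\in\Z} = s(\mathcal{T}(\gamma))$. By the definition of the itinerary map,
\[
b_j = y\bigl(\mathcal{T}(\gamma)(t_j^{\mathcal{T}(\gamma)})\bigr)
    = y\bigl(\gamma(t_j^{\mathcal{T}(\gamma)} + \eta_\gamma)\bigr).
\]
By Lemma \ref{tempos de batida do retorno}, $t_j^{\mathcal{T}(\gamma)} = t_{j+1}^{\gamma} - \eta_\gamma$, so the argument above simplifies to $\gamma(t_{j+1}^{\gamma})$. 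Hence
\[
b_j = y\bigl(\gamma(t_{j+1}^{\gamma})\bigr) = a_{j+1} = (\sigma(a))_j,
\]
which proves $s(\mathcal{T}(\gamma)) = \sigma(s(\gamma))$.

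Since the identity holds for every $\gamma \in \Omega$ and the itinerary is invariant under the equivalence relation by construction, it descends to the quotient, giving $\overline{s}\circ\overline{\mathcal{T}} = \sigma\circ\overline{s}$. There is essentially no obstacle here beyond bookkeeping of the hitting times; the only subtle point to double-check is the case $j<0$ in Lemma \ref{tempos de batida do retorno}, which is already handled by the backward iteration of $\widetilde{\mathcal{T}}$, so the same calculation goes through unchanged.
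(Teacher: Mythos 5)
Your argument is correct and is essentially identical to the paper's own proof: both compute $b_j = y\bigl(\mathcal{T}(\gamma)(t_j^{\mathcal{T}(\gamma)})\bigr) = y\bigl(\gamma(t_j^{\mathcal{T}(\gamma)}+\eta_\gamma)\bigr)$ and then apply Lemma \ref{tempos de batida do retorno} to conclude $b_j = a_{j+1}$. The extra remark about descending to the quotient is fine but is handled in the paper where $\overline{\mathcal{T}}$ and $\overline{s}$ are shown to be well defined.
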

\begin{proof}
	Let $ \gamma\in\Omega $, $ (a_j)_{j\in\Z}=s(\gamma) $, and $ (b_j)_{j\in\Z}=s(\mathcal{T}(\gamma)) $. Then:
	
	\begin{equation*}
	b_j
	\begin{aligned}[t]
	&= s_{j}(\mathcal{T}(\gamma)) = y(\mathcal{T}(\gamma)(t_{j}^{\mathcal{T}(\gamma)})) = y(\gamma(t_{j}^{\mathcal{T}(\gamma)}+\eta_{\gamma}))\\ 
	&= y(\gamma(t_{j+1}^{\gamma}-\eta_{\gamma}+\eta_{\gamma})) = s_{j+1}(\gamma)=a_{j+1}
	\end{aligned}
	\end{equation*}
\end{proof}

\section{Proof of Theorem B}

\begin{definition}
	Two arcs $\overline{A \, B}$ and $\overline{C \, D}$ are parameterized by an \textbf{arc length parameterization} if there exists a  homeomorphism $h: \overline{A \, B} \rightarrow \overline{C \, D}$  such that, for all $x \in \overline{A \, B}$, then $h(x)$ is the point on $\overline{C \, D}$ satisfying\[ \frac{\mbox{length }(\overline{A \, x})}{\mbox{length }(\overline{A \, B})} =  \frac{\mbox{length }(\overline{C \, h(x)})}{\mbox{length }(\overline{C \, D})}. \] 
\end{definition}

\begin{proposition}
\begin{itemize}
	\item[(i)] Let $ \widetilde{Z_k} $ (respectively $ \widetilde{Z_{\infty}} $) be a PSVF presenting a $(k-1)$-homoclinic loop in the set $ \widetilde{\Lambda_{k}} $ (respectively $\infty$-homoclinic loop in the set $ \widetilde{\Lambda_{\infty}} $). Then $ Z_{k} $ and $ \widetilde{Z_{k}} $ restricted to $ \Lambda_{k} $ and $ \widetilde{\Lambda_{k}} $, respectively, are $\Sigma$-equivalent (respectively $ Z_{\infty} $ and $ \widetilde{Z_{\infty}} $).
	
	\item[(ii)]  The PSVF $Z$ of Theorem A, restrict to $\Lambda$, is $\Sigma$-equivalent to any PSVF $\widetilde{Z}$ presenting a compact region $\widetilde{\Lambda}$ bounded by a trajectory of $\widetilde{Z}$ passing through a invisible-visible  two-fold $\widetilde{p}$. Moreover, except for $\widetilde{p}$, the PSVF $\widetilde{Z}$ has just more two invisible tangential singularities.
\end{itemize}
	
\end{proposition}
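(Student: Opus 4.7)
My plan is to exhibit, in each case, an explicit orientation-preserving homeomorphism between the two invariant sets that carries every integral arc on the source side onto the corresponding integral arc on the target side, using the notion of arc-length parameterization introduced just before the statement. The common observation is that each of $\Lambda_k$, $\Lambda_\infty$ and $\Lambda$ decomposes into (at most countably many) arcs, each one being either a piece of an integral curve of $X$, an integral curve of $Y$, or a point of $\Sigma$; exactly the same happens on the target side, and in both cases the combinatorial adjacency of these arcs is imposed by Definition \ref{def loops} (for $k$- or $\infty$-homoclinic loops) or by the boundary description in item (iv) of Theorem A (for $Z$). The arc-length parameterization then produces a canonical homeomorphism between each pair of corresponding arcs that respects the endpoint matching.

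For item (i), finite $k$: Lemma \ref{lema tangencias} organizes $\Lambda_k$ around the $k-1$ visible-visible two-folds $p_1,\dots,p_{k-1}$ and the two sewing endpoints $r_0,r_1$, and it splits $\Lambda_k$ into $2k-2$ integral arcs ($k-1$ above $\Sigma$ and $k-1$ below). By Definition \ref{def loops} a regular $(k-1)$-homoclinic loop for $\widetilde{Z_k}$ has the same combinatorial structure: $k-1$ ordered visible-visible two-folds $\widetilde p_1,\dots,\widetilde p_{k-1}$, two sewing endpoints, and the same adjacency pattern of upper and lower arcs. I define $h$ by the prescriptions $h(p_i)=\widetilde p_i$, $h(r_0)=\widetilde r_0$, $h(r_1)=\widetilde r_1$, and extend linearly in arc-length on each of the $2k-2$ arcs. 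Well-definedness at shared endpoints is automatic since all incoming definitions at $p_i$ collapse to $\widetilde p_i$; continuity of $h$ and $h^{-1}$ follows from the continuity of each piece. Orientation can be made consistent by choosing the parameterizations so that, at every shared vertex, the outward directions on the source match the outward directions on the target. Orbit preservation is immediate: every upper arc is a connected piece of an integral curve of $X_k$ with image a connected integral arc of $\widetilde X_k$ in $\widetilde\Sigma^+$ (analogously for $Y_k$), and since $\Sigma^s\cup\Sigma^e=\emptyset$ in $\Lambda_k$, the sliding-flow clause of Definition \ref{definicao sigma-equivalencia} is vacuous. The $\infty$-homoclinic case is verbatim the same, with a bi-infinite indexing $\{p_j\}_{j\in\Z}$ and no extremal sewing arcs; the absence of a compactness assumption on $\widetilde\Lambda_\infty$ causes no trouble because the definition of $h$ is local arc-by-arc.

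For item (ii), the set $\Lambda$ is bounded by two integral arcs (one of $X$ above $\Sigma$, one of $Y$ below $\Sigma$) that meet at the invisible-visible two-fold $p=(0,0)$ and at the two remaining invisible tangential singularities $(\pm 1,0)$ of $Z$. By hypothesis $\widetilde{\Lambda}$ has exactly the same qualitative boundary: one integral arc of $\widetilde X$ and one of $\widetilde Y$ meeting at $\widetilde p$ and at the two other invisible tangencies. I send $p\mapsto\widetilde p$ together with the two auxiliary tangencies to the corresponding tangencies of $\widetilde Z$, extend by arc-length parameterization on the upper and lower boundary arcs, and extend to an open neighborhood by a flow-box interpolation: on $\overline{\Sigma^+}\cap\Lambda$ the orbits of $X$ foliate a region transverse to $\Sigma$, so the extension of $h$ on $\Sigma\cap\overline\Lambda$ together with the orbit foliations of $X$ and $\widetilde X$ determines $h$ uniquely on $\Sigma^+\cap\Lambda$; the symmetric construction on $\Sigma^-\cap\Lambda$ uses the flows of $Y$ and $\widetilde Y$, and the two pieces glue along $\Sigma$ because they agree there by construction.

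The main obstacle I expect is verifying continuity and orientation-preservation at each tangential singularity, where several arcs meet with definite tangent directions and an arc-length gluing a priori only matches endpoints, not tangent data; but since $\Sigma$-equivalence is purely topological this is a consequence of the identical combinatorial adjacency on both sides rather than of any $C^1$-matching. A secondary delicate point in item (ii) is the interior extension: one must check that the flow-box construction on $\Sigma^\pm\cap\Lambda$ is well-defined globally and not only locally, which follows from the fact that every orbit of $X$ inside $\Lambda\cap\Sigma^+$ is a compact arc starting and ending on $\Sigma\cap\overline\Lambda$, and the same for $\widetilde X$, so the arc-length parameterization on $\Sigma\cap\overline\Lambda$ induces an orbit-by-orbit homeomorphism on the entire half.
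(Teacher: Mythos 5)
For item (i) your construction coincides with the paper's: the authors likewise anchor $h$ at the two-folds ($h(p_j)=\widetilde{p}_j$) and extend arc-by-arc using the arc-length parameterization over the $2k-2$ integral arcs (bi-infinitely many when $k=\infty$); your added remarks on well-definedness at shared endpoints, orientation, and the vacuity of the sliding clause (since $\Sigma^s\cup\Sigma^e=\emptyset$ for $Z_k$) are correct and in fact slightly more careful than the original.

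For item (ii) your overall strategy (boundary matching plus orbit-by-orbit extension into the interior) is the paper's, but as written it contains a gap, fed by a factual slip. The slip: the two auxiliary invisible tangencies of $Z$ are not the corners $(\pm 1,0)$ of $\Lambda$ (those are crossing points); they are $q_1=(-1/\sqrt{2},0)$ and $q_2=(1/\sqrt{2},0)$, the endpoints of $\Sigma^e=(-1/\sqrt{2},0)\times\{0\}$ and $\Sigma^s=(0,1/\sqrt{2})\times\{0\}$, i.e.\ precisely the points where the combinatorial type of $\Sigma$ changes and where the orbit bands are delimited. The gap: your flow-box extension is launched from ``$h$ on $\Sigma\cap\overline{\Lambda}$,'' but you never define $h$ on the segment $[-1,1]\times\{0\}$, which lies in the interior of the two-dimensional region $\Lambda$ and is not contained in the boundary arcs you parameterized. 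More importantly, $h$ on $\Sigma$ cannot be prescribed freely (say, by arc length of the whole segment): it must carry $\Sigma^e$, $\Sigma^s$, $\Sigma^{c+}$, $\Sigma^{c-}$ to their counterparts, respect the sliding vector field on $\Sigma^e\cup\Sigma^s$, and be simultaneously compatible with the $X$-foliation from above and the $Y$-foliation from below, because every crossing point is the landing point of one orbit and the launching point of another. Your claim that the upper and lower extensions ``agree on $\Sigma$ by construction'' is exactly the nontrivial assertion. The paper handles this by propagating $h$ along $\Sigma$ starting from the escaping region: $h$ is fixed on $\Sigma^e$ by arc length, then transported by the $X$-orbits to $\Sigma^s$, by the $Y$-orbits to $\Sigma^{c+}$, by the $X$-orbits to $\Sigma^{c-}$, and by the $Y$-orbits back to $\Sigma^s\cup\{p\}$; even in that scheme the last band must be reconciled with the values of $h$ already fixed on $\Sigma^s$ (e.g.\ by reparameterizing those final orbit arcs so that both endpoints match), a compatibility you would need to address explicitly rather than assume.
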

\begin{proof}
	\begin{itemize}
		
	\item[(i)]	
		$\bullet$ Consider $Z_2$ and $\widetilde{Z_2}$ restricted to  $\Lambda_2$ and $\widetilde{\Lambda}_2$. Take $h: \Lambda_2 \to \widetilde{\Lambda}_2$ such that: $h(p_1) = \widetilde{p}_1$. Now, consider the positive arc of trajectory $\gamma^{+,p_1}_X$ of $X$ starting at $p_1$ and finishing at $r_1$. Analogously, consider the positive arc of trajectory $\widetilde{\gamma}^{+,\widetilde{p}_{1}}_{\widetilde{X}}$ of $\widetilde{X}$ starting at $\widetilde{p}_1$and finishing at $\widetilde{r}_1$.  Using the arc length parameterization  we get $h(\gamma^{+,p_1}_X)=\widetilde{\gamma}^{+,\widetilde{p}_{1}}_{\widetilde{X}}$. Now, it is enough repeat this procedure and extend $h$  to $\gamma^{-,p_1}_{X}$, $\gamma^{+,p_1}_{Y}$ and $\gamma^{-,p_1}_{Y}$.
	
		$\bullet$ For $Z_k$ and $Z_{\infty}$ it is enough repeat the previous construction changing $p_1$ by $p_j$ and $r_0$ by $p_{j+1}$ when it is necessary.
		
	\item[(ii)]	
		$\bullet$  Consider $Z$ and $\widetilde{Z}$ restricted to  $\Lambda$ and $\widetilde{\Lambda}$. Take $h: \Lambda \to \widetilde{\Lambda}$ such that: $h(p) = \widetilde{p}$. Since  $\Sigma^e=(q_1,p) \times \{ 0 \}$  and  $\widetilde{\Sigma}^e \subset \widetilde{\Sigma}$ is a continuous curve  connecting $ \widetilde{q}_1$ and $ \widetilde{p}$, by arc length parameterization we get $h(\Sigma^e)=\widetilde{\Sigma}^e$. Also, we can extend $h$ in such a way that $h(q_1)=\widetilde{q}_1$.
		
		For each $u \in \Sigma^e \cup \{q_1\}$, consider the positive arc of trajectory $\gamma^{+,u}_X$ of $X$  starting at $u$ and finishing at $v\in \Sigma^s$. Analogously, consider the positive arc of trajectory $\widetilde{\gamma}^{+,\widetilde{u}}_{\widetilde{X}}$ of $\widetilde{X}$ starting at $\widetilde{u}=h(u)$ and finishing at $\widetilde{v} \in \widetilde{\Sigma}^s$.  Using the arc length parameterization  we get $h(\gamma^{+,u}_X)=\widetilde{\gamma}^{+,\widetilde{u}}_{\widetilde{X}}$. Moreover, $h(\Sigma^s)=\widetilde{\Sigma}^s$ and $h(q_2)=\widetilde{q}_2$.
		
		For each $u \in \Sigma^e \cup \{p\}$, consider the positive arc of trajectory $\gamma^{+,u}_Y$ of $Y$  starting at $u$ and finishing at $v\in \Sigma^c_{+}$. Analogously, consider the positive arc of trajectory $\widetilde{\gamma}^{+,\widetilde{u}}_{\widetilde{Y}}$ of $\widetilde{Y}$ starting at $\widetilde{u}=h(u)$ and finishing at $\widetilde{v} \in \widetilde{\Sigma}^c_{+}$.  Using the arc length parameterization  we get $h(\gamma^{+,u}_Y)=\widetilde{\gamma}^{+,\widetilde{u}}_{\widetilde{Y}}$. Moreover, $h(\Sigma^c_{+})=\widetilde{\Sigma}^c_{+}$ and $h(s_1)=\widetilde{s}_1$.
		
		For each $u \in \Sigma^c_{+}$, it is possible to repeat the previous argument and conclude that $h(\gamma^{+,u}_X)=\widetilde{\gamma}^{+,\widetilde{u}}_{\widetilde{X}}$, $h(\Sigma^c_{-})=\widetilde{\Sigma}^c_{-}$ (and $h(s_2)=\widetilde{s}_2$).
		
		For each $u \in \Sigma^c_{-}$, consider the positive arc of trajectory $\gamma^{+,u}_Y$ of $Y$  starting at $u$ and finishing at $v\in \Sigma^s \cup \{p\}$. Analogously, consider the positive arc of trajectory $\widetilde{\gamma}^{+,\widetilde{u}}_{\widetilde{Y}}$ of $\widetilde{Y}$ starting at $\widetilde{u}=h(u)$ and finishing at $\widetilde{v} \in \widetilde{\Sigma}^s \cup \{ \widetilde{p} \}$.  Using the arc length parameterization  we get $h(\gamma^{+,u}_Y)=\widetilde{\gamma}^{+,\widetilde{u}}_{\widetilde{Y}}$. 
	\end{itemize}
\end{proof}

%

\section{Examples}\label{secao exemplos}

\begin{example}\label{Ex-3symbols}
	Consider the case $k=3$. Then $ P_{3}=-x^6+\frac{3 x^4}{2}-\frac{9 x^2}{16}+\frac{1}{16} $ with roots $ \pm 1 $ and  $ \pm \frac{1}{2} $ where the first two are simple and the latter are roots of multiplicity 2. Moreover the PSVF $ Z_{3} $ is:
	\begin{equation}
	Z_{3}(x,y)=\left\{
	\begin{array}{l} 
	X_{3}(x,y)= \left(1,-6 x^5+6 x^3-\frac{9 x}{8}\right) ,\quad $for$ \quad y \geq 0 \\ 
	Y_{3}(x,y)= \left(-1,-6 x^5+6 x^3-\frac{9 x}{8}\right),\quad $for$ \quad y \leq 0,
	\end{array}
	\right.
	\end{equation}
	and the points $ p_{1}=(-\frac{1}{2},0) $ and $ p_{2}=(\frac{1}{2},0) $ are visible-visible two fold of $ Z_{3} $. The invariant region is  the set:
	\[\Lambda_{3}=\{(x,P(x))\,\,|\,\,-1\leqslant x\leqslant 1\}\cup\{(x,-P(x))\,\,|\,\,-1\leqslant x\leqslant 1\},\]
	that is partitioned into the arcs
	\begin{align*}
	I_{0}&=\left\{(x,P(x))\,\,|\,\,-1\leqslant x< -\frac{1}{2}\right\}\cup\left\{(x,-P(x))\,\,|\,\,-1\leqslant x< -\frac{1}{2}\right\}\\
	I_{1}&=\left\{(x,P(x))\,\,|\,\,-\frac{1}{2}< x< \frac{1}{2}\right\}\\
	I_{2}&=\left\{(x,-P(x))\,\,|\,\,-\frac{1}{2}< x< \frac{1}{2}\right\}\\
	I_{3}&=\left\{(x,P(x))\,\,|\,\,\frac{1}{2}< x\leqslant 1\right\}\cup\left\{(x,-P(x))\,\,|\,\,\frac{1}{2}\leqslant x\leqslant 1\right\}
	\end{align*}
	as shows Figure \ref{three-symbols} (the arcs are blue, purple, orange and red, respectively): 
	
	\begin{figure}[h]
		\includegraphics[width=0.85\linewidth]{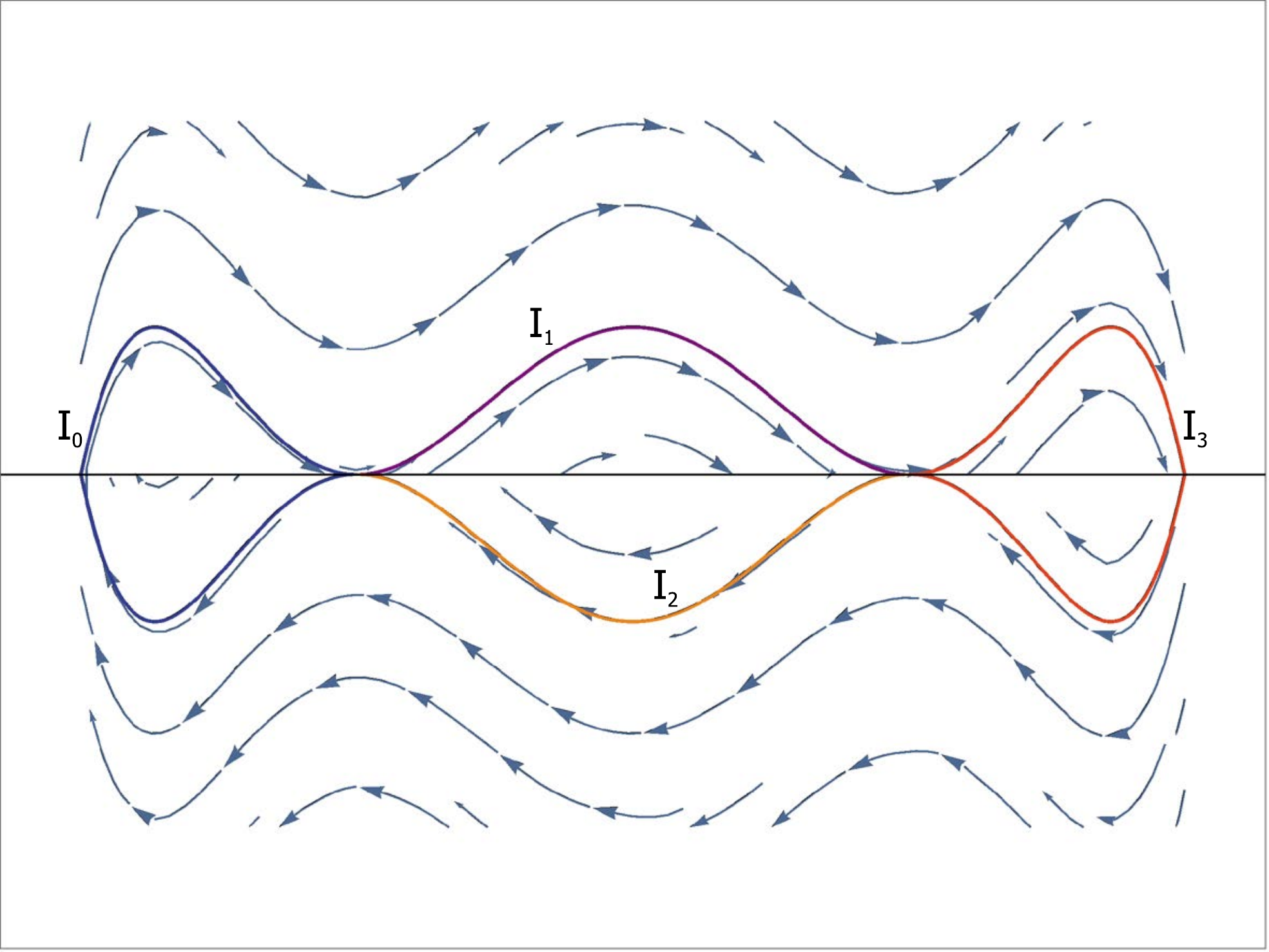}
		\caption{}
		\label{three-symbols}
	\end{figure}
	
	Now, $ \Omega_{3} $ is the set of all trajectories contained in $ \Lambda_{3} $ and $ s:\Omega_{3}\to\{0,1,2,3\}^{\Z} $. If we take $ \overline{\Omega}_{3}=\faktor{\Omega_{3}}{s} $ and the functions $ \overline{s} $ and $ \overline{T_{1}} $ as before, we have that $ \overline{s}(\overline{\Omega}_{3}) $ is a subshift of $ \{0,1,2,3\}^{\Z} $ and $ \overline{s} $ is a conjugation between $ \overline{T_{1}} $ and the shift $ \sigma $. In fact, it is easy to see that such subshift is associated to the transition matrix:
	\[M= \begin{pmatrix} 
	1 & 1 & 0 & 0 \\
	0 & 0 & 1 & 1 \\
	1 & 1 & 0 & 0 \\
	0 & 0 & 1 & 1 
	\end{pmatrix} \]
	by analysing which arc can be reached from the other ones given the orientation of the flow.
	
\end{example}

\section*{Acknowledgements}
A. A. Antunes is supported by grant 2017/18255-6, S\~{a}o Paulo Research Foundation (FAPESP). T. Carvalho is partially supported by grants  2017/00883-0 and 2019/10450-0, S\~{a}o Paulo Research Foundation (FAPESP) and by CNPq-BRAZIL grant 304809/2017-9.


\bibliographystyle{acm}
\bibliography{referencial}

\begin{thebibliography}{10}

\bibitem{BCEminimal}
{\sc Buzzi, C.~A., Carvalho, T., and Euz\'{e}bio, R.~D.}
\newblock On poincar\'{e}-bendixson theorem and non-trivial minimal sets in
  planar nonsmooth vector fields.
\newblock {\em Publicacions Matem\`{a}tiques 62}, 1 (2018), 113--131.

\bibitem{BCEchaotic}
{\sc Buzzi, C.~A., de~Carvalho, T., and Euzébio, R.~D.}
\newblock Chaotic planar piecewise smooth vector fields with non-trivial
  minimal sets.
\newblock {\em Ergodic Theory and Dynamical Systems 36}, 2 (2016), 458–469.

\bibitem{Ball-LuizTiago}
{\sc Carvalho, T., and Gon\c{c}alves, L.~F.}
\newblock Combing the hairy ball using a vector field without equilibria.
\newblock {\em Journal of Dynamical and Control Systems\/} (2019).

\bibitem{Fi}
{\sc Filippov, A.~F.}
\newblock {\em Differential Equations with Discontinuous Righthand Sides},
  first~ed., vol.~18 of {\em Mathematics and its Applications}.
\newblock Springer Netherlands, 1988.

\bibitem{goncalves_sobottka_starling_2017}
{\sc Gonçalves, D., Sobottka, M., and Starling, C.}
\newblock Two-sided shift spaces over infinite alphabets.
\newblock {\em Journal of the Australian Mathematical Society 103}, 3 (2017),
  357–386.

\bibitem{Katok03}
{\sc Hasselblat, B., and Katok, A.}
\newblock {\em A first course in dynamics: with a panorama of recent
  developments}, first~ed.
\newblock Cambridge University Press, 2003.

\bibitem{Katok95}
{\sc Katok, A., and Hasselblat, B.}
\newblock {\em Introduction to the modern theory of dynamical systems},
  first~ed.
\newblock Cambridge University Press, 1995.

\bibitem{Ott2014}
{\sc Ott, W., Tomforde, M., and Willis, P.~N.}
\newblock {\em One-sided shift spaces over infinite alphabets}, vol.~5 of {\em
  New York Journal of Mathematics. NYJM Monographs}.
\newblock State University of New York, University at Albany, Albany, NY, 2014.

\bibitem{RMCG2019}
{\sc Rodrigues, D.~S., Mancera, P. F.~A., Carvalho, T., and Gon\c{c}alves,
  L.~F.}
\newblock A mathematical model for chemoimmunotherapy of chronic lymphocytic
  leukemia.
\newblock {\em Applied Mathematics and Computation 349\/} (2019), 118--133.

\bibitem{Cancer-AMC-2019}
{\sc Rodrigues, D.~S., Mancera, P. F.~A., Carvalho, T., and Gon\c{c}alves,
  L.~F.}
\newblock Sliding mode control in a mathematical model to chemoimmunotherapy:
  the occurrence of typical singularities.
\newblock {\em Applied Mathematics and Computation in press\/} (2019).

\bibitem{Tang:2012}
{\sc {Tang}, S., {Xiao}, Y., {Wang}, N., and {Wu}, H.}
\newblock Piecewise {HIV} virus dynamic model with {CD4$^{*}$T} cell
  count-guided therapy: {I}.
\newblock {\em Journal of Theoretical Biology 308}, 7 (2012), 123 -- 134.
\newblock \url{https://doi.org/10.1016/j.jtbi.2012.05.022}.

\end{thebibliography}

\end{document}